        \definecolor{DarkBlue}{rgb}{0.0, 0.28, 0.39}
        \newtheorem{thm}{Theorem}[section]
        \newtheorem{prp}[thm]{Proposition}%[section]
        \newtheorem{lem}[thm]{Lemma}
        \newtheorem{cor}[thm]{Corollary}
        \newtheorem{df}[thm]{Definition}
        \theoremstyle{definition}
        \newtheorem{nota}[thm]{Notations}
        \theoremstyle{remark}
        \newtheorem{rem}[thm]{Remark}
        \newcommand{\E}{\mathbb{E}}
        \newcommand{\condEp}[2]{\left.\E\left(#1\right\lvert #2 \right)}
        \newcommand{\condEc}[2]{\E\left.\left[#1\right\lvert #2 \right]}
        \newcommand{\R}{\mathbb{R}}
        \newcommand{\Proba}{\mathbb{P}}
        \newcommand{\law}{\mathcal{L}}
        \newcommand{\Z}{\mathbb{Z}}
        \newcommand{\calF}{\mathcal{F}}
        \newcommand{\indic}{\mathbbm{1}}
        \newcommand{\norm}[1]{\left\Vert #1\right\Vert}
        \newcommand{\normB}[1]{\norm{#1}_{\mathbb B}}
        \newcommand{\abs}[1]{\left\lvert #1 \right\rvert}
        \newcommand{\eqdef}{\colonequals}
        \newcommand{\indep}{\perp\!\!\!\perp}
        \newcommand\bigp[1]{\left(#1\right)}
        \newcommand{\bigcro}[1]{\left[#1\right]}
        \newcommand{\eps}{\varepsilon}
        \renewcommand{\leq}{\leqslant}
        \renewcommand{\geq}{\geqslant}
        \renewcommand{\Tilde}{\widetilde}
\title{Rates of convergence in the central limit theorem for Banach valued dependent variables}
\author{Aurélie Bigot\thanks{LAMA, Univ Gustave Eiffel, Univ Paris Est Créteil, UMR 8050 CNRS, F-77454 Marne-La-Vallée, France.}}
\date{}
\begin{document}
\maketitle

\begin{abstract}
    We provide rates of convergence in the central limit theorem in terms of projective criteria for adapted stationary sequences of centered random variables taking values in Banach spaces, with finite moment of order $p \in ]2,3]$ as soon as the central limit theorem holds for the partial sum normalized by $n^{-1/2}$.  
    This result applies to the empirical distribution function in $L^p(\mu)$, where $p\geq 2$ and $\mu$ is a real $\sigma$-finite measure: under some $\tau$-mixing conditions we obtain a rate of order $O(n^{-(p-2)/2})$. In the real case, our result leads to new conditions to reach the optimal rates of convergence in terms of Wasserstein distances of order $p\in ]2,3]$. 
    \\
    
    \textit{Keywords:} stationary sequences; Banach spaces; convergence in distribution; mixing coefficients; empirical processes; Zolotarev distances; Wasserstein distances.
\end{abstract}

\section*{Introduction}
Throughout the paper, $(\mathbb B, \normB{.})$ is a real separable Banach space. Consider $(X_i)_{i \in \Z}$ a stationary sequence of $\mathbb B$-valued centered random variables adapted to a non-decreasing and stationary filtration $(\calF_i)_{i \in \Z}$ and such that $\E \norm{X_0}_{\mathbb B}^2 < \infty$.  For any $n \in \mathbb N^*$ write $S_n = X_1 + \cdots + X_n$. In 2024, the author (see \cite{Big2023}) proved a central limit theorem (in short CLT) for $(n^{-1/2}S_n)_{n \geq 1}$ under the projective condition
    \begin{align}\label{DR_hyp}
        \norm{X_0}_{\mathbb B} \condEp{S_n}{\calF_0}  \text{ converges in } \mathbb{L}^1_{\mathbb B},
    \end{align}
provided that $\mathbb B$ is 2-smooth in the strong sense (see \parencite[(2.2)]{Pin1994}) with a Schauder basis. This result can be viewed as an extension to the Banach space setting of Theorem 1 in \cite{DR2000}. 

Note that other projective criteria leading to a CLT for real-valued r.v.'s have been extended to the Banach spaces setting. Cuny \cite{Cun2017} has extended the Maxwell-Woodroofe theorem (see \cite{MW2000}) to the Banach setting proving that the condition
\begin{align*}
    \sum_{n = 1}^\infty n^{-3/2}(\E[\norm{\condEp{S_n}{\calF_0}}_{\mathbb B}^2])^{1/2} < \infty
\end{align*}
is enough to ensure the CLT for $(n^{-1/2}S_n)_{n \geq 1}$ when the variables take values in a 2-smooth Banach space $\mathbb B$ (see Definition \ref{def:2-smooth}). On another hand, in \cite{DMP2013} the authors extended the Hannan theorem (see \cite{Han1973}) to random variables taking values in a 2-smooth Banach space having a Schauder basis. Hence, they proved that in this context, the condition 
\begin{align*}
    \sum_{n \in \Z} (\E\norm{P_0(X_n)}_{\mathbb B}^2)^{1/2} < \infty
\end{align*}
is enough to ensure the CLT (here $P_0$ is the operator defined by $P_0 = \condEp{\cdot}{\calF_0}-\condEp{\cdot}{\calF_{-1}}$).
It has been proved in \cite{DV2008} that, in the real setting, all these conditions are of independent interest. 

In this paper we are interested in conditions leading to rates of convergence in the CLT when Banach-valued r.v.'s are considered. 
There are different ways to quantify the rate of convergence in the CLT. In this paper, we are interested in quantifying the rate of convergence to zero of 
    \begin{align}
        \label{intro:quantity}
        \Delta_n(f) \eqdef 
        \abs{\E \bigcro{
            f(n^{-1/2}S_n)}
            -\E \bigcro{
            f(G_{})
        }}
    \end{align}
for $f : \mathbb B \to \mathbb R$ belonging to a certain class of functions and where $G$ is a Gaussian $\mathbb B$-valued r.v. whose covariance operator is given in \ref{hyp:CLT} of Theorem \ref{theoreme:vitesse} below. 
In this paper, we shall consider the following class of functions: for $p \geq 1$, let $\Lambda_p(\mathbb B,M)$ be the class of functions $f : \mathbb B \to \R$ $l$-times continuously Fréchet-differentiable such that $\norm{f^{(l)}(0)}\leq M$ and $f^{(l)}$ is $(p-l)$-Hölder continuous with Hölder constant less than or equal to 1 in the sense that 
\begin{align*}
    \norm{f^{(l)}(x) - f^{(l)}(y)} \leq \normB{x-y}^{p-l}, \, \forall x, y \in \mathbb B, 
\end{align*}
where $l$ is the greatest integer strictly less than $p$ and $\norm{.}$ denotes the usual norm on the space of $l$-linear continuous forms.   
In the case of real valued r.v.'s, this class of functions allows to define the so-called Zolotarev distances between probability laws (see our Section \ref{section:reel} for more details). However, in the case of Banach valued r.v.'s, knowing if this class of functions is sufficient to yield the convergence in distribution is not so clear (we refer to Ra\v{c}kauskas and Suquet \cite{RS2023} where the relation between weak convergence of probabilities on a smooth Banach space and uniform convergence over certain classes of smooth functions is established). In Section \ref{section:Lp}, we shall see that this holds in the case where $\mathbb B = L^p(\mu)$, $p\geq 2$ and $\mu$ is a $\sigma$-finite measure. 

Let us now recall some previous results concerning rates in the CLT for Banach-valued r.v.'s in terms of the quantity $\Delta_n(f)$ with $f$ belonging to different classes of functions. In the i.i.d. case, we start with the following result due to \cite{Pau1976} and \cite{Zol1976} and leading to order $O(n^{-1/2})$:
\begin{thm}
    \label{PZ1976:rateCLT}
    Let $(X_i)_{i \in \mathbb Z}$ be a sequence of i.i.d. $\mathbb B$-valued centered random variables such that $\E \normB{X_0}^3 < \infty$. Assume that the CLT applies for $(n^{-1/2}S_n)_{n\geq 1}$ with convergence towards a Gaussian $\mathbb B$-valued random variable $G$. If a functional $f : \mathbb B \to \R$ admits a bounded third Fréchet differential, then $$\E\bigcro{f(n^{-1/2}S_n)} - \E\bigcro{f(G)} = O(n^{-1/2}).$$ 
\end{thm}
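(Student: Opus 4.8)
The plan is to run the Lindeberg replacement (swapping) method. First I would enlarge the underlying probability space so as to carry a sequence $(G_i)_{i \geq 1}$ of i.i.d. copies of $G$, independent of $(X_i)_{i \in \Z}$; since a centered Gaussian variable satisfies its own CLT exactly, $n^{-1/2}(G_1 + \cdots + G_n)$ has the same law as $G$, so it suffices to bound $\E[f(n^{-1/2}S_n)] - \E[f(n^{-1/2}(G_1 + \cdots + G_n))]$. Introducing, for $0 \leq k \leq n$, the hybrid sums $W_k = n^{-1/2}(X_1 + \cdots + X_k + G_{k+1} + \cdots + G_n)$, one has $W_n = n^{-1/2}S_n$, $W_0$ distributed as $G$, and
\begin{align*}
    \E[f(W_n)] - \E[f(W_0)] = \sum_{k = 1}^n \bigp{\E[f(W_k)] - \E[f(W_{k-1})]}.
\end{align*}

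For each $k$, set $U_k = n^{-1/2}(X_1 + \cdots + X_{k-1} + G_{k+1} + \cdots + G_n)$, so that $W_k = U_k + n^{-1/2}X_k$ and $W_{k-1} = U_k + n^{-1/2}G_k$, with $U_k$ independent of the pair $(X_k, G_k)$. I would then expand $f$ around $U_k$ by Taylor's formula to second order with integral remainder, and take expectations conditionally on $U_k$. Three things occur: the first-order terms vanish, because $X_k$ and $G_k$ are centered and $f'(U_k)$ is a continuous linear form (which commutes with the Bochner conditional expectation); the second-order terms $\tfrac12\E[f''(U_k)(n^{-1/2}X_k, n^{-1/2}X_k)\mid U_k]$ and $\tfrac12\E[f''(U_k)(n^{-1/2}G_k, n^{-1/2}G_k)\mid U_k]$ are equal, because $X_0$ and $G$ have the same covariance; and the two remainder terms are bounded in absolute value by $\tfrac{M_3}{6}\,n^{-3/2}\normB{X_k}^3$ and $\tfrac{M_3}{6}\,n^{-3/2}\normB{G_k}^3$ respectively, where $M_3$ is a bound for the third Fréchet differential of $f$. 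Consequently
\begin{align*}
    \abs{\E[f(W_k)] - \E[f(W_{k-1})]} \leq \frac{M_3}{6}\,n^{-3/2}\bigp{\E\normB{X_0}^3 + \E\normB{G}^3},
\end{align*}
and summing the $n$ terms gives the desired $O(n^{-1/2})$ estimate; here $\E\normB{G}^3 < \infty$ by Fernique's theorem.

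The step that requires genuine care is the cancellation of the second-order terms. It is not enough that $X_0$ and $G$ have the same covariance operator (i.e. that the bilinear forms $(\phi,\psi) \mapsto \E[\phi(\cdot)\psi(\cdot)]$ on $\mathbb{B}^*\times\mathbb{B}^*$ coincide), because $f''(U_k)$ is an arbitrary continuous symmetric bilinear form on $\mathbb{B}\times\mathbb{B}$: what one needs is equality of the covariance tensors $\E[X_0 \otimes X_0]$ and $\E[G \otimes G]$ in the projective tensor product $\mathbb{B}\widehat{\otimes}\mathbb{B}$. This does follow from the hypotheses: in the i.i.d. centered case $\E[(n^{-1/2}S_n)\otimes(n^{-1/2}S_n)] = \E[X_0 \otimes X_0]$ for every $n$, while the assumed CLT together with $\E\normB{X_0}^3 < \infty$ forces $(\normB{n^{-1/2}S_n}^2)_{n \geq 1}$ to be uniformly integrable, so the covariance tensors converge and $\E[X_0 \otimes X_0] = \E[G \otimes G]$. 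The remaining points — the validity of the Taylor expansion under a bounded third differential (which in particular makes $f''$ Lipschitz), the interchange of (conditional) expectations with the continuous forms $f'(U_k)$ and $f''(U_k)$, and the integrability of each $f(W_k)$ (ensured by the at most cubic growth of $f$) — are routine.
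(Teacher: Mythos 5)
Your proposal is correct, and it is essentially the approach the paper itself takes: Theorem \ref{PZ1976:rateCLT} is only cited there (to Paulauskas and Zolotarev), but the proof of Theorem \ref{theoreme:vitesse} is exactly this Lindeberg swapping with Gaussian blocks $\Gamma_{i+1}$, a second-order Taylor expansion, and a covariance-matching lemma (Lemma \ref{lemme:egalite_bilineaire}) proved by the same limiting argument $\E[\varphi(n^{-1/2}S_n,n^{-1/2}S_n)]\to\E[\varphi(G,G)]$ that you use to handle general bilinear forms; in the i.i.d.\ case the first-order and cross terms vanish and one recovers your bound. The only point you assert rather than justify is the uniform integrability of $(\normB{n^{-1/2}S_n}^2)_n$, which in a general Banach space follows from the CLT plus $\E\normB{X_0}^3<\infty$ via Hoffmann--J{\o}rgensen's inequality (or the de Acosta--Gin\'e moment-convergence theorem), a standard citation worth adding.
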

This result was extended by Bentkus (\cite{Ben1986}) still in the context of i.i.d. r.v.'s, by providing for $p \in ]2,3]$ an estimate of $\Delta_n(f)$ of order $O(n^{-(p-2)/2})$ if $f(x)$ is bounded by $2^p(2 + \normB{x}^p)$, $f$ is thrice Fréchet-differentiable with some boundary conditions on its differentials and if $\E \normB{X_1}^p < \infty$. 

Now, let $(X_i)_{i \in \Z}$ be a sequence of martingale differences taking values in a Banach space $\mathbb B$ with a Schauder basis, admitting a finite moment of order $2+\delta$ (that is $\E\normB{X_i}^{2+\delta} <\infty$), $\delta \in ]0,1]$, and whose weak conditional moments of order 2 are constant. \parencite[Theorem 3]{BHR1983} proved that
\begin{align*}
    \Delta_n(f) \leq c n^{-(2+\delta)/2}\sum_{i = 1}^n (\E \normB{X_i}^{2+\delta} + \E\normB{G}^{2+\delta})
\end{align*}
for any $f : \mathbb B \to \R$ twice Fréchet-differentiable, whose two first Fréchet-differentials are uniformly continuous and bounded, and such that $f^{(2)}$ is $\delta$-Hölder continuous. Note that this class of functions is slightly more restrictive than $\Lambda_{2+\delta}(\mathbb B, M)$. 

Next, in the context of $\phi$-mixing sequences, Utev (\cite{Ute1991}) established a $\Delta_n(f)$-rate of convergence with a slightly different class of functions than those previously mentioned. His proof is based on blocking techniques and coupling arguments. Let us state \parencite[Theorem 4.1]{Ute1991}{}. Assume that $\mathbb B$ is of type 2 and let $f : \mathbb B \to \R$ be a thrice-Fréchet differentiable function such that there exist $c_1,c_2 > 0$ verifying $\abs{f(x)} \leq c_1 (1 + \normB{x}^3)$ and $\norm{f^{(i)}(x)} \leq c_1(1 + \normB x ^{c_2})$ for $i = 1,2,3$. Then, for any stationary sequence $(X_i)_{i \in \Z}$ of centered $\mathbb B$-valued random variables such that $\E \normB{X_0}^{3+\delta} < \infty$ for some $\delta>0$, 
\begin{align*}
    \Delta_n(f) = O(n^{-1/2})
\end{align*}
provided that the following $\phi$-mixing condition holds: $\phi(k) \leq c k^{-6+10\delta^{-2}}$ (here $(\phi(k))_{k \geq 0}$ is the sequence of $\phi$-mixing coefficients associated with $(X_k)_{k \geq 0}$, see for instance \cite{MPU2019} for the definition). We recall that a Banach space is of type 2 if there exists a constant $c >0$ such that for any sequence $(X_i)_{i \in \Z}$ of $\mathbb B$-valued r.v.'s, independent, centered and with a finite moment of order 2, 
%\begin{align*}
    $\E\bigp{\normB{\sum_{k = 1}^n X_k}^2} \leq c\sum_{k = 1}^n \E\bigp{\normB{X_k}^2}. $
%\end{align*}$(X_i)_{i \in \N}$ of centered i.i.d. random variables in $\mathbb L_{\mathbb B}^2$, there exists $c >0$ such that for any $n$, $\E\normB{\sum_{i=1}^nX_i} \leq cn^{1/2}\E\normB{X_1}$.

In this paper, we shall be interested in giving sufficient conditions in terms of projective criteria to get rates of convergence for the quantity $\Delta_n(f)$ where $f$ belongs to the class of functions $\Lambda_p(\mathbb B, M)$ for $p \in ]2,3]$ and $M \geq 0$. Our Theorem \ref{theoreme:vitesse} of Section \ref{section:results_B} is in this direction and can be viewed as an extension to dependent sequences of Theorem \ref{PZ1976:rateCLT}. As we shall see in Section \ref{section:reel}, even in the real case our Theorem \ref{theoreme:vitesse} leads to new conditions to reach the rate $O(n^{-\delta/2})$, $0 < \delta \leq 1$, when the r.v.'s have a finite moment of order $2+\delta$. The rest of the paper is organized as follows. We dedicate Section \ref{section:Lp} to the case of Banach spaces $L^p(\mu)$ where $p \geq 2$ and $\mu$ is a real measure. Finally, the proofs of the main results are postponed to Section \ref{section:proofs}. Annex \ref{section:annex} is devoted to the computations of the Fréchet derivatives of some specific functions.   

Throughout the paper, let us denote $\mathbb L^p_{\mathbb B}$ the set of $\mathbb B$-valued random variables $X$ such that $\E \normB{X}^p < \infty$ if $p \in [1, \infty[$ and $\inf\{c > 0 : \normB{X} \leq c \;\, a.s.\} < \infty$ if $p = \infty$.

\section{Rates of convergence in the Banach setting}\label{section:results_B}
\noindent
The main result of this paper is Theorem \ref{theoreme:vitesse} below which provides estimates of $\Delta_n(f)$ in terms of projective conditions as soon as the CLT holds for $(n^{-1/2}S_n)_{n \geq 1}$. 
In this section, $(X_i)_{i \in \Z}$ is a stationary sequence of $\mathbb B$-valued centered random variables in $\mathbb L^2_{\mathbb B}$, adapted to a non-decreasing and stationary filtration $(\mathcal F _i)_{i \in \Z}$. Let us introduce the coefficients that will be used below: 
        \begin{align}
            &\gamma(k) \eqdef \E \bigp{\sup \abs{ \condEp{ A(X_0, X_k)}{\calF_0}}}, 
                \label{def_gamma}\tag{1}
            \\
            &\left. \hspace{-8pt} 
            \begin{array}{l}
                a(k) 
                \eqdef 
                \sup\limits_{i \geq 0} \E \bigp{ \norm{X_{-i}}_{\mathbb B}^\delta \sup\abs{\condEc{A(X_0,X_k)}{\calF_{0}} - \E[A(X_{0},X_k)]}},
                \\
                b(k) 
                \eqdef
                \sup\limits_{j \geq 0} \E \bigp{ \norm{X_0}_{\mathbb B}^\delta \sup\abs{\condEc{A(X_k,X_{k+j})}{\calF_{0}} - \E(A(X_{k},X_{k+j})) } } ,
                \\
                \gamma_{2,\delta}(k) \eqdef \max(a(k), b(k)),
            \end{array}
            \hspace{50pt}\right\}
                \label{def_gamma2}\tag{2}
        \end{align}
    where $\delta \in ]0,1]$ and the suprema are taken over all bilinear continuous forms $A$ such that $\norm A \leq 1$.

\begin{thm}\label{theoreme:vitesse}
    Assume that 
    \begin{enumerate}[label =(\alph*)]
        \item \label{hyp:conv_gamma2} $\sum_{k \geq 1} \gamma(k) < \infty$,
        \item \label{hyp:CLT}  $\bigp{\frac{S_n}{\sqrt n}}_{n\geq 1}$ converges in distribution to $G$ where $G$  is a Gaussian $\mathbb B$-valued random variable whose covariance operator is given by: for any $x^*, y^* \in \mathbb B ^*$, $K_G(x^*, y^*) = \sum_{k \in \Z} cov(x^*(X_0), y^*(X_k))$,
        \item  \label{hyp:u.i.} $\bigp{\frac{\norm{S_n}_{\mathbb B}^2}{n}}_{n\geq 1}$ is uniformly integrable.
    \end{enumerate}
    Then, if $X_0 \in \mathbb L^{2+\delta}_{\mathbb B}$, there exists a positive constant $c_\delta$ such that for any $M \geq 0$,
    \begin{align}
        \label{theoreme:vitesse_bound}
        \sup_{f \in \Lambda_{2+\delta}(\mathbb B, M)}\Delta_n(f)
        &\leq 
            n^{-\delta / 2} \bigp{
                (c_\delta + M)\sum_{k \geq 1} k^{\delta / 2} \gamma(k)
                + 
                \sum_{k = 1}^n (k+2)\gamma_{2,\delta}(k)
                +
                \E \norm{X_0}_{\mathbb B}^{2+\delta} + \E \norm{G}_{\mathbb B}^{2+\delta}
            }
        \\ &\hspace{30pt}
            \eqdef n^{-\delta / 2} b(n, M, \delta).  
                \nonumber
    \end{align}
\end{thm}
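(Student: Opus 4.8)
The plan is to run a Lindeberg-type exchange argument. First recall that, since $2<2+\delta\leq 3$, a function $f\in\Lambda_{2+\delta}(\mathbb B,M)$ is twice Fréchet-differentiable with $\norm{f^{(2)}(0)}\leq M$ and $f^{(2)}$ being $\delta$-Hölder; consequently, for any $x,h\in\mathbb B$ one has a second-order Taylor expansion with remainder bounded by $\normB{h}^{2+\delta}$, and $\norm{f^{(1)}(x)}\leq M+M\normB{x}+\normB{x}^{1+\delta}$, $\norm{f^{(2)}(x)}\leq M+\normB{x}^{\delta}$. Introduce a centered Gaussian $\mathbb B$-valued sequence $(N_i)_{i\in\Z}$, independent of $(X_i)_{i\in\Z}$, whose covariance structure is chosen to match that of $(X_i)_{i\in\Z}$, and set, for $0\leq k\leq n$, $T_k=n^{-1/2}(X_1+\dots+X_k+N_{k+1}+\dots+N_n)$ and $U_k=T_k-n^{-1/2}X_k=T_{k-1}-n^{-1/2}N_k$; thus $T_n=n^{-1/2}S_n$, while $T_0$ is a centered Gaussian with covariance operator $n^{-1}\mathrm{Cov}(S_n)$. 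Then
\[
\Delta_n(f)\ \leq\ \sum_{k=1}^n\abs{\E[f(T_k)]-\E[f(T_{k-1})]}\ +\ \abs{\E[f(T_0)]-\E[f(G)]},
\]
and in each summand I would expand $f$ to second order around $U_k$.

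For the individual exchanges, the third-order remainder is bounded by $n^{-(2+\delta)/2}(\E\normB{X_k}^{2+\delta}+\E\normB{N_k}^{2+\delta})$; summing over $k$ and controlling the Gaussian moments via Fernique's theorem produces the contribution $n^{-\delta/2}(\E\normB{X_0}^{2+\delta}+\E\normB{G}^{2+\delta})$. The first-order term of the exchange vanishes on the Gaussian side because $N_k$ is centered and independent of $U_k$; on the $X$-side it does not, and I would treat it by peeling off from $U_k$ the dependence on the recent variables $X_1,\dots,X_{k-1}$ through a secondary telescoping. After conditioning on the $\calF_j$'s (using adaptedness), invoking stationarity, and applying the growth bounds on $f^{(1)},f^{(2)}$, this expresses the first-order contribution as a sum of terms controlled by $\gamma(\cdot)$ — with the extra weight $k^{\delta/2}$ coming from the $\delta$-Hölder errors, which carry spare powers $n^{-\delta/2}$ — and by $a(\cdot)$. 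For the second-order term, the Gaussian side contributes $\tfrac{1}{2n}\E[\langle f^{(2)}(U_k),\mathrm{Cov}(N_k)\rangle]$, and since the covariances of $(N_i)_{i\in\Z}$ match those of $(X_i)_{i\in\Z}$ only the discrepancy between the conditional second bilinear moments of $X_k$ given the past and their unconditional values survives; the $\delta$-Hölder structure again supplies the $\normB{X}^{\delta}$-weights needed to recognize $a(\cdot)$ and $b(\cdot)$, and summing over $1\leq k\leq n$ with the corresponding combinatorial factors yields $\sum_{k=1}^n(k+2)\gamma_{2,\delta}(k)$. Finally, the residual $\abs{\E[f(T_0)]-\E[f(G)]}$ is estimated by a Gaussian interpolation whose error is governed by the covariance mismatch: since $\norm{\mathrm{cov}(X_0,X_k)}\leq\gamma(k)$ (take $A=x^*\otimes y^*$ in \eqref{def_gamma}), we get
\[
\norm{n^{-1}\mathrm{Cov}(S_n)-K_G}\ \leq\ 2n^{-1}\sum_{k=1}^{n}k\,\gamma(k)+2\sum_{k\geq n}\gamma(k)\ \leq\ 4\,n^{-\delta/2}\sum_{k\geq 1}k^{\delta/2}\gamma(k),
\]
using $k\leq k^{\delta/2}n^{1-\delta/2}$ for $k\leq n$, and the Gaussian moment estimates turn the accompanying $\norm{f^{(2)}}$-factor into $c_\delta+M$. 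Collecting all the contributions gives \eqref{theoreme:vitesse_bound}. Hypothesis \ref{hyp:u.i.} is used repeatedly to keep $\sup_n\E\normB{n^{-1/2}S_n}^2$ and the moments of $G$ under control — note that $\mathbb B$ is not assumed $2$-smooth nor of type $2$, so these second-moment bounds do not follow from the moment and coefficient assumptions alone; it plays the role that the convergence $\E(S_n^2)/n\to\sigma^2$ plays in the real-valued setting.

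The main obstacle will be the organization of this dependent-variable Lindeberg scheme: one must carry out the two nested telescopings — first over the exchange index $k$, then over the history $X_1,\dots,X_{k-1}$ inside each increment — in exactly the right order, so that each error term is either a genuine $\delta$-Hölder remainder carrying a spare power $n^{-1/2}$, or can be conditioned onto a $\sigma$-algebra $\calF_j$ from which the recent variables have already been removed, and then identify each surviving expectation with one of the projective coefficients $\gamma(k)$, $a(k)$, $b(k)$ rather than a cruder (e.g.\ strong-mixing type) quantity. This is delicate precisely because $f^{(1)}$ and $f^{(2)}$ are Banach-operator-valued, which is exactly why the suprema over bilinear forms $\norm{A}\leq 1$ in \eqref{def_gamma}--\eqref{def_gamma2} are indispensable, and because on a general separable Banach space the moments of sums — of the $X_i$'s, of the Gaussians $N_i$, and of the mixed partial sums $U_k$ — are not controlled by covariance operators alone and must be handled through \ref{hyp:u.i.} and Fernique's theorem; one must also make sure the constants $c_\delta$ can be chosen independent of $\mathbb B$ and of $f$ beyond its dependence on $M$.
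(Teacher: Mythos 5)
The overall architecture of your argument is the same as the paper's (a one-variable-at-a-time Lindeberg exchange, second-order Taylor with $\delta$-H\"older remainder, a secondary telescoping of the first-order $X$-term over the history, and a covariance-matching step for the second-order terms), but your choice of Gaussian surrogate introduces two genuine gaps. First, you need a centered Gaussian sequence $(N_i)_{i\in\Z}$ in $\mathbb B$ whose joint covariance structure matches that of $(X_i)_{i\in\Z}$. The theorem assumes no type or smoothness on $\mathbb B$, and in a general separable Banach space a centered square-integrable variable need not be pre-Gaussian, so even a single $N_i$ with the covariance of $X_0$ --- let alone the joint vector $(N_1,\dots,N_n)$ with covariance $\mathrm{Cov}(S_n)$ --- need not exist. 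The only Gaussian structure guaranteed by the hypotheses is $G$ itself, via \ref{hyp:CLT}; this is why the paper takes $(\eps_k)_k$ i.i.d.\ with the law of $G$, so that $n^{-1/2}\sum_{k=1}^n\eps_k$ has exactly the law of $G$, which simultaneously settles existence and removes your final term $\abs{\E f(T_0)-\E f(G)}$.

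Second, and independently of existence: if the $N_i$ are dependent, then $N_k$ is correlated with $N_{k+1},\dots,N_n$, which appear in $U_k$, so your claim that ``the first-order term of the exchange vanishes on the Gaussian side because $N_k$ is centered and independent of $U_k$'' is false. Repairing this would require telescoping the Gaussian first-order term over the Gaussian history as well and controlling conditional covariances of the $N_i$'s by quantities comparable to $\gamma(k)$ and $\gamma_{2,\delta}(k)$, none of which you supply. With the paper's i.i.d.\ $\eps_i$ this term genuinely vanishes; the price is that the second-order Gaussian term $\E\bigl[f^{(2)}(\cdot)(\eps_i,\eps_i)\bigr]$ carries the full limiting covariance $K_G$, which the paper matches against $\E\bigl[f^{(2)}(\cdot)(X_0^*,X_0^*)\bigr]+2\sum_{k\geq1}\E\bigl[f^{(2)}(\cdot)(X_0^*,X_k^*)\bigr]$ via Lemma \ref{lemme:egalite_bilineaire}, cancelling the terms with $k<i$ against the telescoped first-order $X$-contributions ($D_1$, $D_3$) and bounding the tail $k\geq i$ separately ($D_2$) --- this is where the weight $k^{\delta/2}\gamma(k)$ and the constant $c_\delta+M$ actually come from. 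A further small mismatch: your third-order remainder produces $\E\normB{N_k}^{2+\delta}$, a moment of a Gaussian with the covariance of $X_0$, whereas the stated bound contains $\E\normB{G}^{2+\delta}$; with the paper's choice $\eps_i\sim G$ these coincide.
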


\begin{rem}
    It is worth noting that under condition \ref{hyp:conv_gamma2}, the covariance series in condition \ref{hyp:CLT} are absolutely convergent. 
\end{rem}

\begin{rem}
    The constant $c_\delta$ only depends on $\delta$, on the second order moment of $\normB{G}$ and on $\lambda = \sup_{k>0}\E(\normB{S_k}^2)/k$ (which is finite thanks to hypothesis \ref{hyp:u.i.}). 
\end{rem}

\begin{rem}
    In the i.i.d. case, we recover Theorem \ref{PZ1976:rateCLT} when $\delta = 1$. 
\end{rem}

The dependence coefficients defined in (\ref{def_gamma}) and (\ref{def_gamma2}) can be estimated in terms of $\tau$-mixing coefficients as introduced in \cite{DP2005}. Let us recall their definition. 

\begin{df}
    Consider a stationary sequence of random variables $(X_i)_{i \in \Z}$ adapted to a non-decreasing and stationary filtration $(\calF_i)_{i \in \Z}$. We define for any integer $k$, 
    \begin{align*}
        &\tau_1(k) 
        =
        \norm{\sup\left\{ 
            P_{X_k | \calF_0}(f) - P_{X_k}(f) \, : \, f \in \Lambda_1(\mathbb B)
        \right\}}_1
        \\
        \text{and }&
        \\
        &\tau_2(k)
        = 
        \max\bigp{
            \tau_1(k), 
            \sup\limits_{l \geq 0}\tau (\calF_0, (X_k,X_{k+l}))
        },
    \end{align*}
    where $P_{X_k | \calF_0}$ is a regular version of the law of probability of $X_k|\calF_0$, $$\tau(\mathcal M,(X,Y)) = \frac{1}{2} \norm{\sup\left\{ 
            P_{(X,Y) | \mathcal M}(f) - P_{(X,Y)}(f) \, : \, f \in \Lambda_1(\mathbb B\times\mathbb B)
        \right\}}_1,$$
    and $\Lambda_1(E)$ is the space of 1-Lipschitz functions from $E$ to $\R$. On $\mathbb B \times \mathbb B$ we put the following norm: $\norm{(u,v)}_{\mathbb B \times \mathbb B} = \frac{1}{2}(\normB{u} + \normB{v})$. 
\end{df}

\begin{lem}\label{prp:mixing_tau}
    Let $(X_i)_{i \in \Z}$ be a stationary sequence of $\mathbb B$-valued centered random variables adapted to a non-decreasing and stationary filtration $(\mathcal F _i)_{i \in \Z}$ and such that $X_0 \in \mathbb L^{2+\delta}_{\mathbb B}$ for some $\delta \in ]0, 1]$. 
    Then
    \begin{align}
        \max (\gamma(k), \gamma_{2,\delta}(k)) \leq 4 \int_0^{\tau_{{}_2}(k)/2} Q_{\normB{X_0}}^{1+\delta} \circ G_{\normB{X_0}} (u) \, du, 
    \end{align}
    where $Q_{\normB{X_0}}$ is the upper tail quantile function defined by $Q_{\normB{X_0}}(u) = \inf\{t \geq 0 \hspace{-2pt}: \hspace{-2pt}  \Proba(\normB{X_0}>t)\leq u\}$  for any $u \in [0,1]$ and $G_{\normB{X_0}}$ is the inverse of $x \mapsto \int_0^x Q_{\normB{X_0}}(u) du$. 
\end{lem}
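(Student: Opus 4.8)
My plan is to derive the bound from the coupling description of the $\tau$-coefficients of \cite{DP2005} together with rearrangement (Hardy--Littlewood / Fréchet) inequalities for quantile functions. Throughout write $Q = Q_{\normB{X_0}}$ and $H(v) = \int_0^v Q(u)\,du$, so that $G_{\normB{X_0}} = H^{-1}$; the change of variables $u = H(v)$ gives the identity
\begin{equation}\label{eq:mixing_cov}
\int_0^t Q^{1+\delta}\circ G_{\normB{X_0}}(u)\,du = \int_0^{H^{-1}(t)}Q^{2+\delta}(v)\,dv ,
\end{equation}
which I would use freely to pass back and forth between the two forms of the target right-hand side.

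\textbf{Coupling.} Enlarging the probability space if necessary, I would use that there exists a $\mathbb B$-valued variable $X_k^{*}$, independent of $\calF_0$ and with the law of $X_k$, realising $W_1(P_{X_k\mid\calF_0},P_{X_k})$ conditionally on $\calF_0$, whence $\E\normB{X_k-X_k^{*}} = \tau_1(k)$; and similarly, for each $l\geq 0$, a $\mathbb B\times\mathbb B$-valued variable $(X_k^{*},X_{k+l}^{*})$ independent of $\calF_0$ with the law of $(X_k,X_{k+l})$, realising $\tau(\calF_0,(X_k,X_{k+l}))\leq\tau_2(k)$ for the normalised norm on $\mathbb B\times\mathbb B$. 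Since $X_0$ and the $X_{-i}$ ($i\geq 0$) are $\calF_0$-measurable while $X_k^{*}$ is centred and independent of $\calF_0$, one has $\E\bigp{A(X_0,X_k^{*})\mid\calF_0} = A(X_0,\E X_k^{*}) = 0 = \E A(X_0,X_k^{*})$ for every bilinear $A$, so the conditional objects entering $\gamma(k)$ and $\gamma_{2,\delta}(k)$ may be rewritten with the small increment $X_k-X_k^{*}$ in place of $X_k$; for $b(k)$ one first splits $A(X_k,X_{k+l})-A(X_k^{*},X_{k+l}^{*}) = A(X_k-X_k^{*},X_{k+l})+A(X_k^{*},X_{k+l}-X_{k+l}^{*})$ and proceeds in the same way.

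\textbf{Reduction to norms and quantile estimates.} Using $\normB A\leq 1$ and $\sup_{\normB A\leq 1}\abs{A(x,y)} = \normB x\,\normB y$, every supremum over $A$ is dominated by a conditional expectation of a product of norms, one factor of which is an increment $\normB{X_k-X_k^{*}}$ (or $\normB{X_{k+l}-X_{k+l}^{*}}$) small in $\mathbb L^1$. Multiplying by the $\calF_0$-measurable weight $\normB{X_{-i}}^\delta$ (resp.\ $\normB{X_0}^\delta$), taking expectations, and applying the multivariate Fréchet inequality $\E(Z_1Z_2Z_3)\leq\int_0^1 Q_{Z_1}(u)Q_{Z_2}(u)Q_{Z_3}(u)\,du$ together with stationarity ($Q_{\normB{X_{-i}}} = Q$) and $Q_{\normB{X_k-X_k^{*}}}(u)\leq 2Q(u/2)$, reduces the whole estimate to integrals of the shape $\int_0^1 Q^{1+\delta}(u)\,g(u)\,du$ with $g$ nonincreasing, $g\leq 2Q(\cdot/2)$ and $\int_0^1 g$ comparable to $\tau_2(k)$. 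Since $Q^{1+\delta}$ is nonincreasing, among all such $g$ the integral is largest when $g$ puts its mass as close to $0$ as the constraints allow, i.e.\ $g = 2Q(\cdot/2)\,\indic_{[0,s]}$ with $\int_0^s 2Q(u/2)\,du = \int_0^1 g$; this classical extremal argument yields, after the substitution $u = 2v$ and dropping the harmless factor $Q^{1+\delta}(2v)\leq Q^{1+\delta}(v)$, a bound $4\int_0^{H^{-1}(c\tau_2(k))}Q^{2+\delta}(v)\,dv$, which by \eqref{eq:mixing_cov} equals $4\int_0^{c\tau_2(k)}Q^{1+\delta}\circ G_{\normB{X_0}}$. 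Feeding in the numerical constants from the previous steps — the factor $\tfrac12$ in the norm on $\mathbb B\times\mathbb B$ being exactly what replaces $\tau_2(k)$ by $\tau_2(k)/2$ — gives the claimed inequality with constant $4$.

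\textbf{Main obstacle.} The coupling and the passage to norm-products are routine; the real work is the quantile bookkeeping for $\gamma_{2,\delta}(k)$. The extra weight $\normB{X_{-i}}^\delta$ (resp.\ $\normB{X_0}^\delta$) is what forces the exponent $2+\delta$ and obliges one to use the pair coefficient $\tau_2(k)$, through the joint coupling of $(X_k,X_{k+l})$, rather than $\tau_1(k)$. Moreover the centering term $\E A(X_0,X_k)$ cannot be estimated on its own: doing so would produce a contribution of the shape $\E\normB{X_0}^\delta\cdot\E(\normB{X_0}\,\normB{X_k-X_k^{*}})$, which is not of the required order; it must be carried along with the conditional term and handled by a single covariance-type inequality. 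Getting the final constant down to exactly $4$ (and the range of integration down to $\tau_2(k)/2$) is the delicate point.
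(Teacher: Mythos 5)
Your proposal follows essentially the same route as the paper: the $\tau$-coupling of Dedecker--Merlev\`ede (one coupled variable $X_k^*$ for $\gamma(k)$ and $a(k)$, a coupled pair for $b(k)$ with the splitting $A(X_k,X_{k+l})-A(X_k^*,X_{k+l}^*)=A(X_k-X_k^*,X_{k+l})+A(X_k^*,X_{k+l}-X_{k+l}^*)$), the reduction of the suprema over bilinear forms to products of norms, and then the Dedecker--Doukhan/Rio quantile--rearrangement inequalities with $Q_{\normB{X_k-X_k^*}}(u)\leq 2Q(u/2)$ and the change of variables $u=H(v)$ — which is exactly the paper's argument. The proof is correct as outlined, and you correctly identify the genuinely delicate points (carrying the centering term through the covariance inequality, and the origin of the $\tau_2(k)/2$ upper limit).
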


\noindent
Now to give sufficient conditions ensuring conditions \ref{hyp:CLT} and \ref{hyp:u.i.} of Theorem \ref{theoreme:vitesse}, we shall consider Banach spaces that are 2-smooth. Let us recall this notion. 

\begin{df}
            \label{def:2-smooth}
        A Banach space $(\mathbb B, \normB{.})$ is said to be 2-smooth if there exists $L \geq 1$ such that for any $x,y \in \mathbb B$, 
        \begin{align*}
            \normB{x+y}^2 + \normB{x-y}^2 \leq 2\normB x ^2 + 2L^2 \normB y ^2 . 
        \end{align*}
        In this case, we say that $(\mathbb B, \normB{.})$ is $(2,L)$-smooth.
    \end{df}
    The notion of 2-smooth Banach spaces is very useful due to the martingale inequality below (see for instance Proposition 1 in \cite{Ass1975}). Assume that $(\mathbb B, \normB{\cdot})$ is (2,$L$)-smooth then for every martingale differences $(d_k)_{1 \leq k \leq n}$, writing $M_n = d_1 + \cdots + d_n$, we have
    \begin{align}
            \label{def:2-smooth_prp}
        \E(\normB{M_n}^2) \leq 2 L^2 \sum_{k = 1}^n \E (\normB{d_k}^2). 
    \end{align}

    As an example, for any $p \geq 2$ and any real measure $\mu$, $L^p(\mu)$ is $(2, \sqrt{p-1})$-smooth (see for instance \parencite[Proposition 2.1]{Pin1994}{}).

Starting from Theorem \ref{theoreme:vitesse} and using Lemma \ref{prp:mixing_tau} we can derive the following result whose proof is postponed to Section \ref{section:proofs}. 
\begin{prp}\label{prp:2smooth+tau}
    Assume that $(\mathbb B, \norm{.}_{\mathbb B})$ is 2-smooth. Let $(X_i)_{i \in \Z}$ be an ergodic stationary sequence of $\mathbb B$-valued centered random variables adapted to a non-decreasing and stationary filtration $(\mathcal F _i)_{i \in \Z}$ and such that $X_0 \in \mathbb L^{2+\delta}_{\mathbb B}$ for some $\delta \in ]0, 1]$. Assume that 
    \begin{align}
            \label{prp:mixing_tau_condition}
        \sum_{k \geq 1}k\int_0^{\tau_{{}_2}(k)/2} Q_{\normB{X_0}}^{1+\delta} \circ G_{\normB{X_0}} (u) \, du < \infty. 
    \end{align}
    Then conditions \ref{hyp:conv_gamma2}, \ref{hyp:CLT} and \ref{hyp:u.i.} of Theorem \ref{theoreme:vitesse} hold and, for any $M \geq 0$ $$\sup_{f \in \Lambda_{2+\delta}(\mathbb B, M)} \Delta_n(f) = O(n^{-\delta/2}).$$ 
\end{prp}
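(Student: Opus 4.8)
The strategy is to verify the three hypotheses \ref{hyp:conv_gamma2}, \ref{hyp:CLT}, \ref{hyp:u.i.} of Theorem \ref{theoreme:vitesse} under the $\tau$-mixing assumption \eqref{prp:mixing_tau_condition}, and then to estimate the bound $b(n,M,\delta)$ appearing in \eqref{theoreme:vitesse_bound}. First I would apply Lemma \ref{prp:mixing_tau}: it gives directly
\begin{align*}
    \max(\gamma(k), \gamma_{2,\delta}(k)) \leq 4 \int_0^{\tau_2(k)/2} Q_{\normB{X_0}}^{1+\delta} \circ G_{\normB{X_0}}(u)\, du \eqdef 4\beta(k),
\end{align*}
so \eqref{prp:mixing_tau_condition} reads $\sum_{k\geq 1} k\,\beta(k) < \infty$. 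Since $\beta(k) \geq 0$ and $k \geq 1$, this immediately forces $\sum_{k\geq 1}\gamma(k) < \infty$, which is \ref{hyp:conv_gamma2}; and it forces $\sum_{k\geq 1} k^{\delta/2}\gamma(k) < \infty$ (because $k^{\delta/2} \leq k$) as well as $\sum_{k\geq 1} k\,\gamma_{2,\delta}(k) < \infty$, which are precisely the two series that must be finite for the right-hand side of \eqref{theoreme:vitesse_bound} to be $O(n^{-\delta/2})$. The moment terms $\E\normB{X_0}^{2+\delta}$ and $\E\normB{G}^{2+\delta}$ are constants (finiteness of the latter follows from the fact that $G$ is Gaussian, once \ref{hyp:CLT} is established and the covariance operator is well-defined, since $\sum_k |cov(x^*(X_0),y^*(X_k))|$ converges under \ref{hyp:conv_gamma2}).

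The substantive part is checking \ref{hyp:CLT} and \ref{hyp:u.i.}, and here I would invoke the 2-smoothness of $\mathbb B$ together with the ergodicity assumption. The natural route is to show that condition \eqref{prp:mixing_tau_condition} (in fact the weaker summability of $\gamma(k)$, or a Hannan/Dedecker--Rio type projective criterion it implies) yields the Dedecker--Rio condition \eqref{DR_hyp}, namely that $\normB{X_0}\,\condEp{S_n}{\calF_0}$ converges in $\mathbb L^1_{\mathbb B}$; by the author's CLT in \cite{Big2023} — valid since $\mathbb B$ is 2-smooth (hence, after passing to an equivalent norm if needed, 2-smooth in the strong sense of Pinelis with a Schauder basis, cf. the discussion in the Introduction) — this gives the CLT with the stated limiting covariance, i.e. \ref{hyp:CLT}. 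For \ref{hyp:u.i.}, I would use the martingale-coboundary decomposition that is available precisely under a condition of the Dedecker--Rio type: writing $S_n = M_n + R_n$ with $M_n$ a martingale with stationary ergodic increments and $\E\normB{R_n}^2 = o(n)$, the 2-smooth martingale inequality \eqref{def:2-smooth_prp} gives $\E\normB{M_n}^2 \leq 2L^2 n\, \E\normB{d_1}^2$, so $(\normB{M_n}^2/n)_n$ is bounded in $\mathbb L^1$; combined with $\E\normB{R_n}^2/n \to 0$ and the fact that $\normB{M_n}^2/n$ converges in distribution (it is the square-norm of a CLT-normalized ergodic martingale, hence its limit has the right second moment), one upgrades $\mathbb L^1$-boundedness to uniform integrability via a standard argument (convergence in distribution plus convergence of first moments, or a direct truncation estimate on the martingale part). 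The key quantitative input throughout is that \eqref{prp:mixing_tau_condition} controls $\sum_k \gamma(k) < \infty$, which via Lemma \ref{prp:mixing_tau} and the elementary bound $\gamma(k) \leq 4\beta(k)$ controls the projective quantity $\E\normB{\condEp{S_n}{\calF_0} - \condEp{S_{n-1}}{\calF_0}}$-type sums needed to produce the coboundary decomposition.

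Finally, assembling the pieces: by Theorem \ref{theoreme:vitesse}, for every $M \geq 0$,
\begin{align*}
    \sup_{f \in \Lambda_{2+\delta}(\mathbb B, M)} \Delta_n(f) \leq n^{-\delta/2}\Bigl( (c_\delta + M)\sum_{k\geq 1} k^{\delta/2}\gamma(k) + \sum_{k=1}^n (k+2)\gamma_{2,\delta}(k) + \E\normB{X_0}^{2+\delta} + \E\normB{G}^{2+\delta}\Bigr),
\end{align*}
and the bracketed quantity is bounded uniformly in $n$ by $(c_\delta+M)\sum_{k\geq 1}k^{\delta/2}\gamma(k) + 3\sum_{k\geq 1}k\,\gamma_{2,\delta}(k) + \E\normB{X_0}^{2+\delta} + \E\normB{G}^{2+\delta} < \infty$, using $k+2 \leq 3k$ for $k \geq 1$ and the summability established above. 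Hence $\sup_{f\in\Lambda_{2+\delta}(\mathbb B,M)}\Delta_n(f) = O(n^{-\delta/2})$, which is the claim.

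I expect the main obstacle to be \ref{hyp:u.i.}: extracting uniform integrability of $(\normB{S_n}^2/n)_n$ rather than mere boundedness requires either a clean martingale-coboundary decomposition valid under the projective hypothesis in this Banach setting, or a direct truncation argument, and one must make sure the 2-smoothness (and the Schauder-basis/strong-smoothness caveat from \cite{Big2023}) is genuinely available — the passage from "2-smooth" to the precise hypotheses of the cited CLT may need a remark. Verifying \ref{hyp:CLT} is then essentially a citation of \cite{Big2023} once the projective condition \eqref{DR_hyp} has been derived from $\sum_k \gamma(k) < \infty$.
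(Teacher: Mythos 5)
Your skeleton is right, and your final assembly step (Lemma \ref{prp:mixing_tau} bounds $\max(\gamma(k),\gamma_{2,\delta}(k))$ by $4\int_0^{\tau_2(k)/2}Q_{\normB{X_0}}^{1+\delta}\circ G_{\normB{X_0}}(u)\,du$, so \eqref{prp:mixing_tau_condition} gives \ref{hyp:conv_gamma2} together with $\sum_k k^{\delta/2}\gamma(k)<\infty$ and $\sum_k k\,\gamma_{2,\delta}(k)<\infty$, whence the $O(n^{-\delta/2})$ bound from Theorem \ref{theoreme:vitesse}) is exactly what the paper does. The gaps are in the middle. Your route to \ref{hyp:CLT} via the Dedecker--Rio condition \eqref{DR_hyp} and \cite{Big2023} does not go through as stated: that CLT requires $\mathbb B$ to be 2-smooth \emph{in the strong sense} and to possess a Schauder basis, while the proposition assumes only 2-smoothness; you flag this obstacle yourself but offer no way around it, and renorming does not manufacture a Schauder basis. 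The paper's actual key technical step is to prove that $(\condEp{S_n}{\calF_0})_{n\geq 1}$ is Cauchy in $\mathbb L^2_{\mathbb B}$ under \eqref{prp:mixing_tau_condition} --- a genuine computation using the coupling lemma of \cite{DM2006} and a summation-by-parts on the indicators $\indic_{u\leq G_{\normB{X_0}}(\tau_1(i)/2)}$, which is precisely where the factor $k$ in \eqref{prp:mixing_tau_condition} is consumed. This $\mathbb L^2$ convergence yields Volný's decomposition $X_n=d_n+z_{n-1}-z_n$ with $d_n,z_n\in\mathbb L^2_{\mathbb B}$, and \ref{hyp:CLT} then follows from \parencite[Proposition 3.2]{Cun2017}, which needs only 2-smoothness. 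Your proposal contains no substitute for this step, and your asserted coboundary decomposition with $\E\normB{R_n}^2=o(n)$ is exactly what it would have to deliver.

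For \ref{hyp:u.i.}, your first mechanism (convergence in distribution of $\normB{M_n}^2/n$ plus convergence of first moments) is circular: convergence of $\E(\normB{M_n}^2/n)$ to the second moment of the limit is essentially the uniform integrability being sought. Your alternative (truncation) is the right one but is not carried out; the paper splits $d_k$ at a level $B$, controls the unbounded part via the 2-smooth martingale inequality \eqref{def:2-smooth_prp} by $4L^2\E(\normB{d_0}^2\indic_{\normB{d_0}>B})$, and controls the bounded part via the Burkholder-type inequality for martingales in 2-smooth spaces \parencite[Theorem 2.6]{Pin1994}, giving $\E\normB{M_n'}^{2+a}\leq c(2B)^{2+a}n^{(2+a)/2}$ and hence a Markov bound that vanishes as $A\to\infty$. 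Without the $\mathbb L^2$-Cauchy argument and this truncation estimate, the proof is incomplete.
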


For the reader's convenience, let us give the following result which specifies the rates of decrease of $(\tau_2(k))_{k\geq 1}$ and the moments of $\normB{X_0}$ for (\ref{prp:mixing_tau_condition}) to hold. Its proof follows from \parencite[Proof of Lemma 2]{DD2003}{}.
\begin{cor}
    Assume that $(\mathbb B, \norm{.}_{\mathbb B})$ is 2-smooth. Let $(X_i)_{i \in \Z}$ be an ergodic stationary sequence of $\mathbb B$-valued centered random variables adapted to a non-decreasing and stationary filtration $(\mathcal F _i)_{i \in \Z}$ and such that $X_0 \in \mathbb L^{2+\delta}_{\mathbb B}$ for some $\delta \in ]0, 1]$. Assume that one of the following conditions holds: 
    \begin{enumerate}[label = (\roman*)]
         \item there exists $r \in ]2+\delta, \infty]$ such that $X_0 \in \mathbb L^{r}_{\mathbb B}$ and $\sum_{k > 0} k^{1+(2+2\delta)/(r-2-\delta)}  \tau_{2}(k) < \infty$
         
        \item there exist $r > 2+\delta$ and $c > 0$ such that for any $x > 0$, $\Proba\bigp{ \norm{X_0}_{\mathbb B}^{} > x} \leq \bigp{c/x}^r$ and $\sum_{k > 0} k \tau_{2}(k)^{1-(1+\delta)/(r-1)} < \infty$
        
        \item $\E \bigp{\normB{X_0}^{2+\delta} \ln(1+\normB{X_0})^{2}} < \infty$ and $\tau_2(n) = O(b^n)$ for some $b <1$. 
    \end{enumerate}
    Then $\sum_{k \geq 1}k\int_0^{\tau_{{}_2}(k)} Q_{\normB{X_0}}^{1+\delta} \circ G_{\normB{X_0}} (u) \, du < \infty$ and Proposition \ref{prp:2smooth+tau} applies. 
\end{cor}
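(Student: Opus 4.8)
The plan is to reduce each of the three listed moment/mixing conditions to the single integral condition \eqref{prp:mixing_tau_condition} of Proposition \ref{prp:2smooth+tau}, namely $\sum_{k\geq 1} k \int_0^{\tau_2(k)/2} Q_{\normB{X_0}}^{1+\delta}\circ G_{\normB{X_0}}(u)\,du<\infty$, and then invoke that proposition. The whole argument is essentially an estimation of the quantity $R(v):=\int_0^v Q_{\normB{X_0}}^{1+\delta}\circ G_{\normB{X_0}}(u)\,du$ as $v\to 0$, which one wants to control by the tail (equivalently, the quantile function $Q:=Q_{\normB{X_0}}$) of $\normB{X_0}$; since the summand is monotone in $\tau_2(k)$, replacing $\tau_2(k)/2$ by $\tau_2(k)$ only costs a factor, so it suffices to show $\sum_k k\, R(\tau_2(k))<\infty$ in each case. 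I would set up the change of variables $u=G(x)=\int_0^x Q(t)\,dt$, so that $du = Q(x)\,dx$ and $Q\circ G(u)=Q(x)$ is the original quantile, turning $R$ into a more tractable weighted integral of $Q$.

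For case (i), the extra integrability $X_0\in\mathbb L^r_{\mathbb B}$ gives $\int_0^1 Q(u)^r\,du<\infty$, hence a polynomial bound on $Q$ near $0$; plugging this into $R$ via Hölder's inequality (splitting the power $1+\delta$ against the available $r$-th moment) yields $R(v)\lesssim v^{\theta}$ with $\theta=1-(1+\delta)/(\text{something involving }r)$—more precisely one gets $R(v) = O\!\big(v^{1-(1+\delta)/(r-1)}\big)$ type bounds after the Hölder split is optimized—and then $\sum_k k\,\tau_2(k)^{\theta}<\infty$ is exactly what a summability condition on $k^{1+(2+2\delta)/(r-2-\delta)}\tau_2(k)$ delivers by a further Hölder/Jensen step over $k$. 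Case (ii) is cleaner: the polynomial tail bound $\Proba(\normB{X_0}>x)\leq (c/x)^r$ gives $Q(u)\leq c\,u^{-1/r}$ directly, so $R(v)$ can be computed explicitly as $O(v^{1-(1+\delta)/(r-1)})$ (the exponent matching the one appearing in the stated condition $\sum_k k\,\tau_2(k)^{1-(1+\delta)/(r-1)}<\infty$), and the conclusion is immediate. Case (iii) is the logarithmic-integrability regime: $\E(\normB{X_0}^{2+\delta}\ln(1+\normB{X_0})^2)<\infty$ translates into $\int_0^1 Q(u)^{2+\delta}\ln(1/u)^2\,du<\infty$, which controls $R(v)$ by something like $O\big(1/\ln(1/v)^{?}\big)$ after handling the $Q\circ G$ composition, and with $\tau_2(n)=O(b^n)$ one has $\ln(1/\tau_2(n))\asymp n$, so $k\,R(\tau_2(k))$ is summable because $\sum_k k/k^{2}<\infty$ (the two powers of logarithm being exactly what is needed to beat the extra factor $k$).

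In all three cases the routine but slightly delicate point is the interplay between $Q$ and the composition with $G_{\normB{X_0}}$: one must be careful that $G$ is the inverse of $x\mapsto\int_0^x Q(u)\,du$, so $Q\circ G$ is not $Q$ but its "area-reparametrized" version, and the standard trick (used in \cite{DD2003} and \cite{Rio2017}-type arguments) is precisely the substitution $u\mapsto \int_0^u Q$ which I mentioned above; after that substitution everything becomes an ordinary Hölder-inequality estimate. I expect the main obstacle to be bookkeeping the exponents so that the threshold conditions on $\sum_k k^{\alpha}\tau_2(k)$ (resp. $\sum_k k\,\tau_2(k)^\beta$) come out exactly as stated, rather than with worse constants—this is where following the proof of Lemma 2 in \cite{DD2003} verbatim is the safest route, and indeed the statement already advertises that the proof "follows from" that computation, so the write-up can legitimately be short: establish the substitution, cite \cite{DD2003} for the quantitative estimate of $R(v)$ under each tail assumption, combine with a Hölder step over the index $k$, and conclude by Proposition \ref{prp:2smooth+tau}.
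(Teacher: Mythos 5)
Your proposal is correct and takes essentially the same route as the paper, which gives no computation at all and simply states that the proof follows from the proof of Lemma 2 in \cite{DD2003}; your reduction (the substitution $u=\int_0^x Q_{\normB{X_0}}$ turning the integrand into $\int_0^{G_{\normB{X_0}}(v)}Q_{\normB{X_0}}^{2+\delta}$, followed by the three tail estimates) is precisely that argument. One small point: in case (i) a bare H\"older step over $k$ lands exactly on the borderline exponent $(r+\delta)/(r-2-\delta)$ and loses a logarithm, so one must indeed follow the interchange-of-sum-and-integral computation of \cite{DD2003} verbatim, as you yourself recommend.
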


For applications in mind, let us give a condition in terms of $\beta$-mixing coefficients that implies (\ref{prp:mixing_tau_condition}). We first recall the definition of such coefficients. 

\begin{df}
    Consider a stationary sequence of $\mathbb B$-valued random variables $(X_i)_{i \in \Z}$ adapted to a non-decreasing and stationary filtration $(\calF_i)_{i \in \Z}$. We define for any  integer $k$, 
    \begin{align*}
        &\beta_2(k)
        = 
        \sup\limits_{l \geq 0}\beta (\calF_0, (X_k,X_{k+l}))
    \end{align*}
    where $\beta(\mathcal M,(X,Y)) = 
        \norm{\sup\left\{ 
            P_{(X,Y) | \mathcal M}(f) - P_{(X,Y)}(f) \, : \, \norm{f}_\infty \leq 1
            \right\}}_1$. 
\end{df}

\begin{lem}
        \label{cor:mixing_beta}
    Let $(X_i)_{i \in \Z}$ be a stationary sequence of $\mathbb B$-valued centered random variables adapted to a non-decreasing and stationary filtration $(\mathcal F _i)_{i \in \Z}$. Let $\delta >0$, assume that 
    \begin{align}
            \label{cor:mixing_beta_condition}
        \sum_{k \geq 1}k\int_0^{\beta_{{}_2}(k)} Q_{\normB{X_0}}^{2+\delta}(u) \, du < \infty. 
    \end{align}
    Then (\ref{prp:mixing_tau_condition}) is satisfied. 
\end{lem}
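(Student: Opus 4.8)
The plan is to show that condition (\ref{cor:mixing_beta_condition}) implies (\ref{prp:mixing_tau_condition}) by establishing two comparison facts: first, that $\tau_2(k)$ is controlled by $\beta_2(k)$ (up to a factor involving the tail of $\normB{X_0}$), and second, that the integrand $Q_{\normB{X_0}}^{1+\delta}\circ G_{\normB{X_0}}$ can be bounded in a way compatible with a change of variables bringing us back to an integral in terms of $Q_{\normB{X_0}}^{2+\delta}$ against $\beta_2$. The key classical input is the coupling/comparison between the $\tau$-coefficient and the $\beta$-coefficient: since $\beta$-mixing dominates weak dependence coefficients built from Lipschitz test functions, one has a bound of the shape $\tau_2(k) \leq 2\int_0^{\beta_2(k)} Q_{\normB{X_0}}(u)\,du$ (this is the standard inequality relating $\tau$ and $\beta$; see Rio-type estimates, and it uses $X_0 \in \mathbb L^1_{\mathbb B}$, which follows from (\ref{cor:mixing_beta_condition}) with $\delta>0$).

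The main technical step is then to handle the composition with $G_{\normB{X_0}}$. Writing $H(x) = \int_0^x Q_{\normB{X_0}}(u)\,du$ so that $G_{\normB{X_0}} = H^{-1}$, the idea is to perform the substitution $u = H(t)$, $du = Q_{\normB{X_0}}(t)\,dt$, in the integral $\int_0^{\tau_2(k)/2} Q_{\normB{X_0}}^{1+\delta}\circ G_{\normB{X_0}}(u)\,du$. Under this change of variables the upper limit becomes $G_{\normB{X_0}}(\tau_2(k)/2)$ and the integral turns into $\int_0^{G_{\normB{X_0}}(\tau_2(k)/2)} Q_{\normB{X_0}}^{1+\delta}(t)\,Q_{\normB{X_0}}(t)\,dt = \int_0^{G_{\normB{X_0}}(\tau_2(k)/2)} Q_{\normB{X_0}}^{2+\delta}(t)\,dt$. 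So it remains to show $G_{\normB{X_0}}(\tau_2(k)/2) \leq C\,\beta_2(k)$ for a constant $C$, or more precisely to bound the resulting integral by a multiple of $\int_0^{\beta_2(k)} Q_{\normB{X_0}}^{2+\delta}(u)\,du$. Since $x \leq H(x)/Q_{\normB{X_0}}(x)$ is not quite what we want, the cleaner route is: from $\tau_2(k) \leq 2 H(\beta_2(k))$ (the $\tau$–$\beta$ bound rewritten via $H$) and monotonicity of $G_{\normB{X_0}}=H^{-1}$ we get $G_{\normB{X_0}}(\tau_2(k)/2) \leq \beta_2(k)$, whence
\begin{align*}
    \int_0^{\tau_2(k)/2} Q_{\normB{X_0}}^{1+\delta}\circ G_{\normB{X_0}}(u)\,du
    = \int_0^{G_{\normB{X_0}}(\tau_2(k)/2)} Q_{\normB{X_0}}^{2+\delta}(t)\,dt
    \leq \int_0^{\beta_2(k)} Q_{\normB{X_0}}^{2+\delta}(t)\,dt.
\end{align*}
Multiplying by $k$ and summing over $k\geq 1$ then gives (\ref{prp:mixing_tau_condition}) directly from (\ref{cor:mixing_beta_condition}) (noting that $\int_0^{\tau_2(k)/2}\leq\int_0^{\tau_2(k)}$ so the precise placement of the factor $1/2$ is harmless).

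The step I expect to be the main obstacle is making the $\tau_2 \leq 2H(\beta_2)$ comparison fully rigorous for the \emph{Banach-valued} two-dimensional coefficient $\tau(\calF_0,(X_k,X_{k+l}))$, rather than just the scalar case: one needs the coupling lemma for $\beta$-mixing (a $\calF_0$-measurable random pair with the conditional law, independent of the data, realizing the $\beta$-coefficient) together with the bound $\E\normB{(X,Y)-(X',Y')}_{\mathbb B\times\mathbb B}\mathbbm{1}_{\text{coupling fails}} \leq \int_0^{\beta}(Q_{\normB{X}} + Q_{\normB{Y}})(u)\,du$, and then to use stationarity so that $Q_{\normB{X_k}} = Q_{\normB{X_{k+l}}} = Q_{\normB{X_0}}$, producing the factor $Q_{\normB{X_0}}$ (the $1/2$ in the $\mathbb B\times\mathbb B$ norm absorbs the two terms). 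One should also double-check the edge cases where $\beta_2(k)=0$ or $Q_{\normB{X_0}}$ is unbounded near $0$; the change of variables is still valid since $H$ is continuous, strictly increasing on $\{H<\infty\}$ and $H(0)=0$.
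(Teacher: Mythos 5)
Your proposal is correct and follows essentially the same route as the paper: the bound $\tau_2(k)\leq 2\int_0^{\beta_2(k)}Q_{\normB{X_0}}(u)\,du$ (which the paper takes from Lemma 4 of Dedecker--Merlev\`ede 2006, covering the Banach-valued two-dimensional coefficient you were worried about), followed by the substitution $u=H(t)$ and the monotonicity of $G_{\normB{X_0}}=H^{-1}$ to reduce the integral to $\int_0^{\beta_2(k)}Q_{\normB{X_0}}^{2+\delta}(t)\,dt$. No gaps.
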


\begin{proof}
    For any $k$, from \parencite[Lemma 4]{DM2006},
    \begin{align*}
    \tau_{{}_2}(k) 
    \leq 2 \int_0^{\beta_2(k)} Q_{\normB{X_0}}(u) \, du
    \leq 2G_{\normB{X_0}}^{-1}(\beta_2(k)). 
    \end{align*}
    Thus by a change of variables, 
    \begin{align*}
        \int_0^{\tau_{{}_2}(k)/2} Q_{\normB{X_0}}^{1+\delta} \circ G_{\normB{X_0}} (u) \, du
        \leq 
        \int_0^{\beta_2(k)} Q_{\normB{X_0}}^{2+\delta}(v) \, dv. 
    \end{align*}
\end{proof}

\begin{rem}
    Let $(X_i)_{i \in \Z}$ be a stationary sequence of $\mathbb B$-valued centered random variables adapted to a non-decreasing and stationary filtration $(\mathcal F _i)_{i \in \Z}$. Assume that one the following condition holds: 
    \begin{enumerate}[label = (\roman*)]
        \item there exists $r \in ]2+\delta, \infty]$ such that $X_0 \in \mathbb L^{r}_{\mathbb B}$ and $\sum_{k > 0} k^{1+(4+2\delta)/(r-2-\delta)}  \beta_{2}(k) < \infty$

        \item there exist $r > 2+\delta$ and $c > 0$ such that for any $x > 0$, $\Proba\bigp{ \norm{X_0}_{\mathbb B}^{} > x} \leq \bigp{c/x}^r$ and $\sum_{k > 0} k \beta_{2}(k)^{1-(2+\delta)/r} < \infty$

        \item $\E \bigp{\normB{X_0}^{2+\delta} \ln(1+\normB{X_0})^{2}} < \infty$ and $\beta_2(n) = O(b^n)$ for some $b <1$.
    \end{enumerate}
    Then (\ref{cor:mixing_beta_condition}) is satisfied. 
\end{rem}

\section{The case of real-valued random variables}\label{section:reel}

In the real case, Theorem \ref{theoreme:vitesse} gives some rates in the CLT in terms of Zolotarev distances. Let us recall this notion: let $p >0$, for two real probability measures $\mu$ and $\nu$, the Zolotarev distance of order $p$ between $\mu$ and $\nu$ is given by:
\begin{align*}
    \zeta_p(\mu, \nu) = \sup\left\{ \abs{\int_\R f d\mu - \int_\R f d\nu} \, : \, f \in \Lambda_{p}^0(\R) \right\}
\end{align*}
where
    $\Lambda_{p}^0(\R) \eqdef 
    \Lambda_{p}(\R,0) \cap \{f : f^{(i)}(0) = 0, i=0,\cdots,l \}
    ,$
with $l$ the largest integer strictly less than $p$.
One the advantages of this quantity is that it can be compared with the so-called Wasserstein distance as quoted in \cite{Rio2017} and recalled in what follows. Recall that the latter distance is defined for any $p >0$ by 
\begin{align*}
    W_p(\mu, \nu) 
    = \inf\left\{\bigcro{\E \, \abs{X-Y} ^p}^\frac{1}{p} : P_X = \mu, P_Y = \nu\right\}
    = \norm{F_\mu^{-1} - F_\nu^{-1}}_{L^p}
\end{align*}
where $F_\mu^{-1}, F_\nu^{-1}$ are the generalized inverses of the cumulative distribution functions respectively of the real probability measures $\mu$ and $\nu$. Theorem 1 in \parencite[]{Rio1998} states the following comparison between Zolotarev and Wasserstein distances: for any $p \geq 1$ there exists a positive constant $c_p$ such that
\begin{align}\label{comparison_Wasserstein_Zolotarev}
    W_p(\mu, \nu) \leq c_p \zeta_{p}^{1/p}(\mu, \nu). 
\end{align}

In the real case, it is shown in \parencite[]{Basu1980} that for sequences $(X_n)_{n \in \Z}$ of martingale differences with finite moment of order 3, the following estimate holds: for any uniformly continuous function $f$ twice differentiable and such that $f^{(2)}$ is $\delta$-Hölder continuous for some $\delta \in ]0, 1]$, 
\begin{align*}
    \Delta_n(f) = O(n^{-\delta/2})
\end{align*}
where $G$ is a centered Gaussian r.v. with the same variance as $X_0$.

Relaxing the martingale assumption, Dedecker, Merlevède and Rio (\parencite{DMR2009}) have in particular proved that for a stationary sequence of real random variables in $\mathbb L^p$, with $2< p < 3$, adapted to a non-decreasing stationary filtration $(\calF _k)_k$ and satisfying 
\begin{align}
    &\sum_{n \geq 1} \condEp{X_n}{\calF_0} \text{ converges in }\mathbb L^p 
    \label{DMR2009:cond(3.1)}
    \\
    \text{and }\quad &
    \nonumber
    \\
    &\sum_{n = 1}^\infty \frac{1}{n^{2-p/2}} \norm{\condEp{\frac{S_n^2}{n} - \sigma^2}{\calF_0}}_{p/2} < \infty , 
    \,
    \text{where }\sigma^2 = \lim_{n \to +\infty} n^{-1}\E(S_n^2),
    \label{DMR2009:cond(3.2)}
\end{align}
the following estimate holds: for any $r \in [p-2,p]$,
\begin{align*}
    \zeta_r(P_{n^{-1/2}S_n}, P_{G}) = O(n^{1-p/2}) 
\end{align*}
where $G$ is a centered Gaussian r.v. with variance $\sigma^2$. We infer from (\ref{comparison_Wasserstein_Zolotarev}) that
\begin{align*}
    W_r(P_{n^{-1/2}S_n}, P_{G}) = O(n^{-(p-2)/(2\max(1,r))}).
\end{align*}
\\
Since for real-valued r.v.'s the dependence coefficients defined in (\ref{def_gamma}) and (\ref{def_gamma2}) simplify, let us give a precise statement of Theorem \ref{theoreme:vitesse} in this case.
\begin{thm}\label{theoreme:vitesse_reel}
    Let $(X_i)_{i \in \Z}$ be an ergodic stationary sequence of real centered and square-integrable random variables, adapted to a non-decreasing and stationary filtration $(\mathcal F _i)_{i \in \Z}$. 
    Let us consider for some $\delta \in ]0, 1]$,
        \begin{align*}
            &\Tilde{\gamma}(k) \eqdef \E \bigp{\abs{X_0 \condEc{X_k}{\calF_0}}}, 
            \\
            &\Tilde{a}(k) 
            \eqdef 
            \sup\limits_{i \geq 0} \E \bigp{ \abs{X_{-i}}^\delta \abs{\condEc{X_0X_k}{\calF_{0}} - \E[X_{0}X_k]}},
            \\
            &\Tilde{b}(k) 
            \eqdef
            \sup\limits_{j \geq 0} \E \bigp{ \abs{X_0}^\delta \abs{\condEc{X_k X_{k+j}}{\calF_{0}} - \E(X_{k} X_{k+j})} } ,
            \\
            \textrm{and}&
            \\
            &\Tilde{\gamma_{2,\delta}}(k) \eqdef \max(\Tilde{a}(k), \Tilde{b}(k)). 
        \end{align*}
    Assume that
        $\sum_{k \geq 1} \Tilde{\gamma}(k) < \infty$. 
    Then, if $\E \abs{X_0}^{2+\delta}< \infty$, there exists a positive constant $c_\delta$ such that
    \begin{align*}
        &\zeta_{2+\delta}(P_{n^{-1/2}S_n}, P_{G})
        \\ \nonumber
        &\hspace{25pt}
            \leq 
            n^{-\delta / 2} \bigp{
        c_\delta \, \sum_{k \geq 1} k^{\delta / 2} \Tilde{\gamma}(k)
        + 
        \sum_{k = 1}^n (k+2)\Tilde{\gamma_{2,\delta}}(k)
        +
        \E \abs{X_0}^{2+\delta} + \E \abs{G}^{2+\delta}
        },  
    \end{align*}
    where $G$ is a centered Gaussian random variable whose variance is given by $\E G^2 = \sum_{k \in \mathbb Z} cov(X_0, X_k)$.   
\end{thm}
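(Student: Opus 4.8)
The plan is to deduce Theorem~\ref{theoreme:vitesse_reel} from Theorem~\ref{theoreme:vitesse} applied with $\mathbb B=\R$. The space $\R$ equipped with the absolute value satisfies the parallelogram identity, hence is $(2,1)$-smooth in the sense of Definition~\ref{def:2-smooth}, and it trivially carries a Schauder basis, so the whole framework of Section~\ref{section:results_B} is available here.

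The first step is to observe that, for real-valued variables, the coefficients defined in (\ref{def_gamma}) and (\ref{def_gamma2}) coincide with their tilded analogues. A continuous bilinear form on $\R\times\R$ has the shape $A(x,y)=c\,xy$ with $\norm A=\abs c$, so that a supremum over $\{A:\norm A\leq 1\}$ is a supremum over $\abs c\leq 1$. Since $(X_i)_{i\in\Z}$ is adapted, $X_0$ is $\calF_0$-measurable, whence $\condEc{A(X_0,X_k)}{\calF_0}=c\,X_0\,\condEc{X_k}{\calF_0}$ and $\sup_{\abs c\leq 1}\abs{c\,X_0\,\condEc{X_k}{\calF_0}}=\abs{X_0\,\condEc{X_k}{\calF_0}}$; taking expectations yields $\gamma(k)=\Tilde\gamma(k)$. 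Running the same computation with the bilinear form evaluated at $(X_0,X_k)$, respectively at $(X_k,X_{k+j})$, gives $a(k)=\Tilde a(k)$ and $b(k)=\Tilde b(k)$, hence $\gamma_{2,\delta}(k)=\Tilde{\gamma_{2,\delta}}(k)$.

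The second step is to verify that hypotheses \ref{hyp:conv_gamma2}, \ref{hyp:CLT} and \ref{hyp:u.i.} of Theorem~\ref{theoreme:vitesse} hold under the assumptions of Theorem~\ref{theoreme:vitesse_reel}. Hypothesis \ref{hyp:conv_gamma2} is exactly the standing assumption $\sum_{k\geq 1}\Tilde\gamma(k)<\infty$. For \ref{hyp:CLT} and \ref{hyp:u.i.}, I would use adaptedness again to write $\abs{X_0}\,\condEc{S_n}{\calF_0}=\sum_{k=1}^n\abs{X_0}\,\condEc{X_k}{\calF_0}$ and note that $\sum_{k\geq 1}\norm{\abs{X_0}\,\condEc{X_k}{\calF_0}}_1=\sum_{k\geq 1}\Tilde\gamma(k)<\infty$; therefore this series converges in $\mathbb L^1$, i.e. condition (\ref{DR_hyp}) is satisfied for $\mathbb B=\R$. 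The projective central limit theorem under (\ref{DR_hyp})---established in the real case in \cite{DR2000} and extended to the Banach setting in \cite{Big2023}---together with the ergodicity assumption then gives the convergence in distribution of $(n^{-1/2}S_n)$ to a centered Gaussian variable, and its proof also provides the uniform integrability of $(S_n^2/n)$ demanded in \ref{hyp:u.i.}. To identify the limiting variance, note that for $k\geq 1$ one has $\abs{cov(X_0,X_k)}=\abs{\E\bigp{X_0\,\condEc{X_k}{\calF_0}}}\leq\Tilde\gamma(k)$, so $\sum_{k\in\Z}\abs{cov(X_0,X_k)}<\infty$ and $n^{-1}\E(S_n^2)=\sum_{\abs k<n}(1-\abs k/n)\,cov(X_0,X_k)\to\sum_{k\in\Z}cov(X_0,X_k)$; hence $G$ has the announced variance, which is \ref{hyp:CLT}.

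Finally, I would translate the conclusion of Theorem~\ref{theoreme:vitesse} into a bound on the Zolotarev distance. Since $\Lambda_{2+\delta}^0(\R)\subseteq\Lambda_{2+\delta}(\R,0)$ by definition, $\zeta_{2+\delta}(P_{n^{-1/2}S_n},P_G)=\sup_{f\in\Lambda_{2+\delta}^0(\R)}\Delta_n(f)\leq\sup_{f\in\Lambda_{2+\delta}(\R,0)}\Delta_n(f)$, and inserting $M=0$ together with the identities $\gamma=\Tilde\gamma$ and $\gamma_{2,\delta}=\Tilde{\gamma_{2,\delta}}$ into (\ref{theoreme:vitesse_bound}) produces precisely the stated estimate. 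I do not expect a genuine obstacle: the argument is mostly bookkeeping on top of Theorem~\ref{theoreme:vitesse}, the one substantive input being the Dedecker--Rio projective CLT and the uniform integrability that comes with it; the only point deserving care is the implication $\sum_k\Tilde\gamma(k)<\infty\Rightarrow(\ref{DR_hyp})$ and the ensuing identification of $\E G^2$, both handled by the elementary estimate $\abs{cov(X_0,X_k)}\leq\Tilde\gamma(k)$, $k\geq 1$, recorded above.
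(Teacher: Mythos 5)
Your proposal is correct and follows the route the paper itself intends: Theorem \ref{theoreme:vitesse_reel} is stated as the specialization of Theorem \ref{theoreme:vitesse} to $\mathbb B=\R$, where bilinear forms reduce to multiples of $(x,y)\mapsto xy$ so that $\gamma=\Tilde\gamma$ and $\gamma_{2,\delta}=\Tilde{\gamma_{2,\delta}}$, and the Zolotarev distance is the supremum of $\Delta_n(f)$ over $\Lambda_{2+\delta}^0(\R)\subseteq\Lambda_{2+\delta}(\R,0)$. Your verification of hypotheses \ref{hyp:CLT} and \ref{hyp:u.i.} via the absolute $\mathbb L^1$-convergence of $\sum_k X_0\condEc{X_k}{\calF_0}$ (hence condition (\ref{DR_hyp})) and the Dedecker--Rio theorem is exactly the step the paper leaves implicit, and it is carried out correctly.
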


In the real case, the dependence coefficients can be estimated with the help of the $\alpha$-dependent coefficients as introduced in \cite{DP2007} (see also \parencite[Section 5]{MPU2019}{}). For the reader's convenience, let us recall their definition. 
\begin{df}
    Consider a stationary sequence of real-valued r.v.'s $(X_i)_{i \in \Z}$ adapted to a non-decreasing and stationary filtration $(\calF_i)_{i \in \Z}$. For any $k \geq 0$, let
    \begin{align*}
        &\alpha_2(k) = 
            \sup_{l \geq 0} \alpha \bigp{\calF_0, (X_k, X_{k + l})}       
    \end{align*}
    where, denoting $Z^{(0)} = Z - \E (Z)$,  
\begin{align*}
    \alpha(\mathcal M, (X,Y)) = \sup\limits_{x,y \in \R} \norm{
        \condEp{\indic_{X \leq x}^{(0)}\indic_{Y \leq y}^{(0)}}{\mathcal M} - \E\bigp{\indic_{X \leq x}^{(0)}\indic_{Y \leq y}^{(0)}}
        }_1.
\end{align*}

\end{df}

\begin{cor}\label{cor:mixing_realcase}
    Let $(X_i)_{i \in \Z}$ be a stationary sequence of centered real-valued r.v.'s adapted to a non-decreasing and stationary filtration $(\mathcal F _i)_{i \in \Z}$. Assume that $(\Tilde{\gamma_{2,\delta}}(k))_{k \geq 1}$ decreases towards $0$ and $\E \abs{X_0}^{2+\delta}< \infty$ for some $\delta \in ]0, 1]$ and consider the conditions 
    \begin{enumerate}[label = (\roman*)]
        \item $\sum_{k \geq 1}k\int_0^{\alpha_{{}_2}(k)} Q_{\abs{X_0}}^{2+\delta}(u) \, du < \infty$
        \label{cor_cond_suff:item1_reel}
        \item $\sum_{k \geq 1}k\int_0^{\tau_{{}_2}(k)/2} Q_{\abs{X_0}}^{1+\delta} \circ G_{\abs{X_0}} (u) \, du < \infty$.
        \label{cor_cond_suff:item2_reel}
    \end{enumerate}
    If either \textit{(i)} or \textit{(ii)} is satisfied, the CLT applies for $\bigp{n^{-1/2}S_n}_{n\geq 1}$ and
    \begin{align*}
        \zeta_{2+\delta}(P_{n^{-1/2}S_n}, P_{G}) = O(n^{-\delta /2})
    \end{align*}
    where the Gaussian random variable $G$ is as in Theorem \ref{theoreme:vitesse_reel}.
\end{cor}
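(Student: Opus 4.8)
The plan is to derive the corollary from Theorem~\ref{theoreme:vitesse_reel}, whose conclusion, for an ergodic stationary sequence with $\sum_{k\geq1}\Tilde{\gamma}(k)<\infty$ and $\E\abs{X_0}^{2+\delta}<\infty$, reads
\begin{align*}
    \zeta_{2+\delta}(P_{n^{-1/2}S_n},P_G)\leq n^{-\delta/2}\bigp{c_\delta\sum_{k\geq1}k^{\delta/2}\Tilde{\gamma}(k)+\sum_{k=1}^n(k+2)\Tilde{\gamma_{2,\delta}}(k)+\E\abs{X_0}^{2+\delta}+\E\abs{G}^{2+\delta}}.
\end{align*}
Since $\E\abs{X_0}^{2+\delta}<\infty$ and $G$ is Gaussian, the announced rate will follow once I know that $\sum_{k\geq1}k^{\delta/2}\Tilde{\gamma}(k)<\infty$ and $\sum_{k\geq1}(k+2)\Tilde{\gamma_{2,\delta}}(k)<\infty$, for then the bracket stays bounded as $n\to\infty$. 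Using $k^{\delta/2}\leq k$ and $k+2\leq 3k$ for $k\geq1$, both follow from
\begin{align*}
    \sum_{k\geq1}k\,\max\bigp{\Tilde{\gamma}(k),\,\Tilde{\gamma_{2,\delta}}(k)}<\infty,\tag{$\star$}
\end{align*}
which moreover gives the hypothesis $\sum_{k\geq1}\Tilde{\gamma}(k)<\infty$. Conditions \textup{(i)} and \textup{(ii)} each force $\alpha_{{}_2}(k)\to0$, resp.\ $\tau_{{}_2}(k)\to0$, so the sequence is mixing, in particular ergodic, and Theorem~\ref{theoreme:vitesse_reel} is applicable once $(\star)$ is established; the CLT then follows from the resulting $\zeta_{2+\delta}(P_{n^{-1/2}S_n},P_G)\to0$ (equivalently, from the Dedecker--Rio theorem \cite{DR2000}, since $\sum_k\Tilde{\gamma}(k)<\infty$ makes $\abs{X_0}\condEc{S_n}{\calF_0}=\sum_{k=1}^n\abs{X_0}\condEc{X_k}{\calF_0}$ converge in $\mathbb L^1$). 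So it only remains to prove $(\star)$ under \textup{(i)} or under \textup{(ii)}.

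Assume \textup{(ii)}. I would first observe that for the Banach space $\mathbb B=\R$ the dependence coefficients $\gamma(k)$ and $\gamma_{2,\delta}(k)$ of Section~\ref{section:results_B} coincide with $\Tilde{\gamma}(k)$ and $\Tilde{\gamma_{2,\delta}}(k)$: a norm-one bilinear continuous form on $\R$ is the multiplication by a scalar of modulus at most $1$, and $X_0$ being $\calF_0$-measurable yields $\condEc{X_0X_k}{\calF_0}=X_0\condEc{X_k}{\calF_0}$ (and similarly for the other products). Hence Lemma~\ref{prp:mixing_tau}, applied with $\mathbb B=\R$, gives
\begin{align*}
    \max\bigp{\Tilde{\gamma}(k),\,\Tilde{\gamma_{2,\delta}}(k)}\leq 4\int_0^{\tau_{{}_2}(k)/2}Q_{\abs{X_0}}^{1+\delta}\circ G_{\abs{X_0}}(u)\,du,
\end{align*}
and multiplying by $k$ and summing, assumption \textup{(ii)} yields $(\star)$.

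Assume \textup{(i)}. I would reduce it to \textup{(ii)}, exactly as in the proof of Lemma~\ref{cor:mixing_beta} but with the $\beta$-mixing estimate replaced by its $\alpha$-analogue valid for real-valued variables, namely $\tau_{{}_2}(k)\leq 2\int_0^{\alpha_{{}_2}(k)}Q_{\abs{X_0}}(u)\,du\leq 2\,G_{\abs{X_0}}^{-1}(\alpha_{{}_2}(k))$ (see \cite{DP2007} and \parencite[Section~5]{MPU2019}). Performing the change of variables $u=\int_0^v Q_{\abs{X_0}}(w)\,dw$, under which $G_{\abs{X_0}}(u)=v$ and $du=Q_{\abs{X_0}}(v)\,dv$, and using that the integrand is nonnegative, I obtain
\begin{align*}
    \int_0^{\tau_{{}_2}(k)/2}Q_{\abs{X_0}}^{1+\delta}\circ G_{\abs{X_0}}(u)\,du\leq\int_0^{G_{\abs{X_0}}^{-1}(\alpha_{{}_2}(k))}Q_{\abs{X_0}}^{1+\delta}\circ G_{\abs{X_0}}(u)\,du=\int_0^{\alpha_{{}_2}(k)}Q_{\abs{X_0}}^{2+\delta}(v)\,dv,
\end{align*}
so that \textup{(i)} implies \textup{(ii)} and the previous step concludes. (The case $X_0\equiv0$ being trivial, one may assume $Q_{\abs{X_0}}\not\equiv0$, which makes the change of variables legitimate on the range that matters.)

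I expect the only non-routine ingredient to be the inequality $\tau_{{}_2}(k)\leq 2\int_0^{\alpha_{{}_2}(k)}Q_{\abs{X_0}}(u)\,du$. It is proved by representing a $1$-Lipschitz function on $\R$ --- and, for the two-dimensional contribution to $\tau_{{}_2}$, a product of two such functions --- through its integral representation over the indicators $\indic_{X_k>t}$ (resp.\ $\indic_{X_k>t}\indic_{X_{k+l}>s}$), which bounds $\condEc{f(X_k)}{\calF_0}-\E f(X_k)$ in $\mathbb L^1$ by an integral in $t$ of $\norm{\condEc{\indic_{X_k>t}^{(0)}}{\calF_0}}_1\leq\min\bigp{\alpha_{{}_2}(k),\,2\Proba(\abs{X_0}>\abs t)}$; Rio's quantile truncation applied to $\int_\R\min\bigp{\alpha_{{}_2}(k),2\Proba(\abs{X_0}>\abs t)}\,dt$ then produces the stated bound. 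Only the real-valued case is needed, where this estimate is classical, so once Theorem~\ref{theoreme:vitesse_reel} is in hand the rest of the argument is essentially bookkeeping.
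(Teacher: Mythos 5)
Your overall strategy is the intended one: apply Theorem \ref{theoreme:vitesse_reel} and reduce everything to the convergence of $\sum_{k}k^{\delta/2}\Tilde{\gamma}(k)$ and $\sum_{k}k\,\Tilde{\gamma_{2,\delta}}(k)$. Your treatment of condition \textit{(ii)} is correct and matches the paper's route: in the real case the coefficients of Section \ref{section:results_B} coincide with $\Tilde{\gamma}$ and $\Tilde{\gamma_{2,\delta}}$, and Lemma \ref{prp:mixing_tau} applied with $\mathbb B=\R$ gives exactly the required summability; the change of variables turning $\int_0^{G_{\abs{X_0}}^{-1}(a)}Q_{\abs{X_0}}^{1+\delta}\circ G_{\abs{X_0}}$ into $\int_0^{a}Q_{\abs{X_0}}^{2+\delta}$ is also the one used in the proof of Lemma \ref{cor:mixing_beta}.

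The treatment of condition \textit{(i)} has a genuine gap. You reduce \textit{(i)} to \textit{(ii)} through the inequality $\tau_{{}_2}(k)\leq 2\int_0^{\alpha_{{}_2}(k)}Q_{\abs{X_0}}(u)\,du$, but $\alpha_{{}_2}(k)$ is the \emph{weak} Dedecker--Prieur coefficient, defined only through products of centered indicators, whereas the two-dimensional part of $\tau_{{}_2}(k)$ takes a supremum over \emph{all} $1$-Lipschitz functions of the pair $(X_k,X_{k+l})$. Your sketched justification writes the test function as an integral against $\indic_{X_k>t}\indic_{X_{k+l}>s}$; this representation requires bounded Hardy--Krause variation, which a generic two-variable Lipschitz function does not have (take $f(x,y)=g(x+y)$ with $g$ $1$-Lipschitz and $g'$ not of bounded variation), so the weak $\alpha_{{}_2}$ cannot dominate $\tau_{{}_2}$; the coupling inequality of this shape in the literature uses the strong mixing coefficient instead. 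The paper's intended argument for \textit{(i)} (see the remark following the corollary, pointing to [MPU2019, pp.\ 201--204]) avoids $\tau_{{}_2}$ entirely: it bounds $\Tilde{\gamma}(k)$, $\Tilde{a}(k)$ and $\Tilde{b}(k)$ \emph{directly} in terms of $\alpha_{{}_2}(k)$, which is legitimate precisely because the only two-variable functions occurring in these coefficients are the products $X_0X_k$ and $X_kX_{k+j}$, for which the double-indicator representation and Rio's quantile truncation do apply. Substituting that direct estimate for your reduction repairs the proof. A minor further point: decay of $\alpha_{{}_2}(k)$ does not give mixing or ergodicity of the sequence, only the $2$-ergodicity invoked in the paper's remark, which is nevertheless sufficient here.
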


\begin{rem}
    To prove that under condition \textit{\ref{cor_cond_suff:item1_reel}} the series $\sum_{k \geq 1} k^{\delta / 2} \Tilde\gamma(k)$ and $\sum_{k \geq 1} k\Tilde{\gamma_{2,\delta}}(k)$ are convergent, we can use the arguments developed in \parencite[pages 201-204]{MPU2019}{}. 
\end{rem}

\begin{rem}
        In this theorem, we do not need to assume the sequence to be ergodic. Indeed, the fact that $(\alpha_{2}(k))_k$ decreases towards $0$ implies 2-ergodicity as defined in \parencite[Definition 1.2]{DS2017}{}.
    \end{rem}

Recalling the inequality (\ref{comparison_Wasserstein_Zolotarev}), it follows that under the assumptions of Corollary \ref{cor:mixing_realcase}, 
\begin{align}
    W_{2+\delta}(P_{n^{-1/2}S_n}, P_{G}) = O(n^{-\delta /(4+2\delta)}). 
\end{align}
This bound was obtained by Sakhanenko (\cite{Sak1985})  in the independent case. Furthermore this rate cannot be improved as indicated in \cite{Rio2009}. 

\begin{rem}
    Conditions (\ref{DMR2009:cond(3.1)}) and (\ref{DMR2009:cond(3.2)}) involved in \parencite[Theorem 3.1]{DMR2009}{} are not comparable with those of our Theorem \ref{theoreme:vitesse_reel}. However, proceeding as in \parencite[pages 203-204]{MPU2019}{}, note that (\ref{DMR2009:cond(3.1)}) holds with $p=2+\delta$ provided that
    $
        \sum_{k \geq 1} k^{p-1}\int_0^{\alpha_2(k)} Q_{\abs{X_0}}^p(u) \, du < \infty
    $
    which is a more restrictive condition than those involved in our Corollary \ref{cor:mixing_realcase}. 
\end{rem}

\section{The case of \texorpdfstring{$L^p(\mu)$}{}-valued random variables}\label{section:Lp}

Let $p \geq 2$ and $\mu$ a $\sigma$-finite measure on $\R$. The space $L^p(\mu)$ equipped with its usual norm is $2$-smooth (even in the strong sense, see Section 2 in \cite{Pin1994}). Hence, Proposition \ref{prp:2smooth+tau} can be applied with $\mathbb B = L^p(\mu)$. Moreover, from Theorem 16 in \cite{RS2023}, we infer that for any $p \geq 1$, in $\mathbb B = L^p(\mu)$ the class of functions $\Lambda_p(\mathbb B,1) \cap B_{||.||_\infty}(0,1)$ characterizes the convergence in distribution.
Note that, as $L^p(\mu)$ admits a continuous modulus of convexity, this can also be shown by Theorem 1 in \cite{Bau1999} combined with Proposition 5 in \cite{BF1966}. 

\noindent
Thus in $L^p(\mu)$, Theorem \ref{theoreme:vitesse} quantifies the convergence in distribution in the CLT. 
\\[6pt]

Let us now apply Theorem \ref{theoreme:vitesse} to the specific functions $x \mapsto \norm{x}_{L^p (\mu)}^q$ for some $p \geq 2$ and $q>2$.
\\

First, when $p = 2$, $\norm{.}_{L^2(\mu)}^2 \in \Lambda_{2+\delta}(L^2(\mu), 2)$ for any $0 < \delta \leq 1$. Indeed, it is easy to see that $\norm{.}_{L^2(\mu)}^2$ is infinitely Fréchet-differentiable and its second order Fréchet-differential at point $x \in L^2(\mu)$ is given by $(u,v) \mapsto 2 \langle u,v \rangle_{L^2(\mu)}$. Then if $\E \norm{X_0}_{L^2(\mu)}^{2+\delta} < \infty$ and if \ref{hyp:conv_gamma2}, \ref{hyp:CLT} and \ref{hyp:u.i.} are verified, 
\begin{align}
        \label{Lp:estimation_p2}
    &\abs{\E \norm{n^{-1/2}S_n}_{L^2(\mu)}^2 -  \E \norm{G}_{L^2(\mu)}^2}
    \nonumber
    \\& \hspace{11pt}
    \leq n^{-\delta / 2} \bigp{
        (c_\delta+2) \, \sum_{k \geq 1} k^{\delta / 2} \gamma(k)
        + 
        \sum_{k = 1}^n (k+2)\gamma_{2,\delta}(k)
        +
        \E \norm{X_0}_{L^2(\mu)}^{2+\delta} + \E \norm{G}_{L^2(\mu)}^{2+\delta}}. 
\end{align}

Next, for $p \in ]2,3]$, according to \parencite[Section 4]{BF1966}{}, $\norm{.}_{p}^{p} \in \Lambda_p(L_p(\mu),0)$. Consequently, for any $L^p(\mu)$-valued stationary sequence of random variables with finite moment of order $p$ with $2<p\leq 3$, if \ref{hyp:conv_gamma2}, \ref{hyp:CLT} and \ref{hyp:u.i.} are verified, 
\begin{align}
        \label{Lp:estimation_2p3}
    &\abs{\E \norm{n^{-1/2}S_n}_{L^p(\mu)}^p -  \E \norm{G}_{L^p(\mu)}^p}
    \nonumber
    \\& 
    \leq n^{-(p-2) / 2} \bigp{
        c_{p-2} \, \sum_{k \geq 1} k^{(p-2) / 2} \gamma(k)
        + 
        \sum_{k = 1}^n (k+2)\gamma_{2,p-2}(k)
        +
        \E \norm{X_0}_{L^p(\mu)}^{p} + \E \norm{G}_{L^p(\mu)}^{p}}. 
\end{align}

We now turn to the case $p > 3$. In this case, we need a preliminary result. 
\begin{lem}
    \label{lemma:diff3_normLp}
    For any $p \geq 3$ and any $q >0$, $\psi_p^q : x \mapsto \norm{x}_{L^p(\mu)}^q$ is thrice Fréchet-differentiable on $L^p(\mu)\setminus \{0\}$ and for any $x \in L^p(\mu)\setminus \{0\}$ and any $h_1, h_2, h_3 \in L^p(\mu)$,
        \begin{align}
      {\psi_p^q}^{(2)}(x)(h_1, h_2)
      &= 
      q(p-1)\norm{x}_{L^p(\mu)}^{q -p}\int h_1 h_2 \abs x^{p-2}d\mu 
        \nonumber
      \\
      &\hspace{33pt}
      + 
      q(q-p)\norm{x}_{L^p(\mu)}^{q - 2p}\int h_1 x \abs{x}^{p-2} d\mu \int h_2 x \abs{x}^{p-2} d\mu
        \nonumber
    \end{align}
    and 
    \begin{align}
        \hspace{22pt}&\hspace{-22pt}{\psi_p^q}^{(3)}(x)(h_1,h_2,h_3)
        =
        q(p-1)(p-2)\norm{x}_{L^p(\mu)}^{q-p} \int h_1h_2h_3x\abs{x}^{p-4}d\mu
            \nonumber
        \\
        +&
        q(p-1)(q-p)\norm{x}_{L^p(\mu)}^{q-2p}\left(
            \int h_1 x \abs{x}^{p-2}d\mu \int h_2h_3 \abs{x}^{p-2}d\mu
            \right.
                \nonumber
                \\
            &\hspace{0.2\textwidth}+\left.
            \int h_2 x \abs{x}^{p-2}d\mu \int h_1h_3 \abs{x}^{p-2}d\mu
            +
            \int h_3 x \abs{x}^{p-2}d\mu \int h_1h_2 \abs{x}^{p-2}d\mu
        \right)
            \nonumber
        \\
        +&
        q(q-p)(q-2p)\norm{x}_{L^p(\mu)}^{q-3p}\int h_1 x \abs{x}^{p-2}d\mu \int h_2 x \abs{x}^{p-2}d\mu \int h_3 x \abs{x}^{p-2}d\mu
        .
            \nonumber
    \end{align}
    In addition, if $q >2$ then $\psi_p^q$ is twice-differentiable at $0$ with ${\psi_p^q}^{(1)}(0)(h_1) = 0$ and ${\psi_p^q}^{(2)}(0)(h_1,h_2) = 0$. 
\end{lem}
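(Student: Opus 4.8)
\emph{Proof sketch.} The plan is to factor $\psi_p^q$ through the $p$-th power of the norm: write $\psi_p^q = g \circ \Phi$ where $\Phi \colon x \mapsto \normB{x}^p = \int \abs{x}^p\,d\mu$ on $L^p(\mu)$ and $g(t) = t^{q/p}$ on $(0,\infty)$. On $L^p(\mu)\setminus\{0\}$ the functional $\Phi$ is continuous and strictly positive, so $g$ is evaluated only on an open subset of $(0,\infty)$, where it is $C^\infty$ with $g'(t) = \tfrac{q}{p}t^{q/p-1}$, $g''(t) = \tfrac{q(q-p)}{p^2}t^{q/p-2}$ and $g'''(t) = \tfrac{q(q-p)(q-2p)}{p^3}t^{q/p-3}$. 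Hence, once we know that $\Phi$ is thrice Fréchet-differentiable on $L^p(\mu)\setminus\{0\}$ with $\Phi^{(1)}(x)(h) = p\int h x\abs{x}^{p-2}d\mu$, with $\Phi^{(2)}(x)(h_1,h_2) = p(p-1)\int h_1h_2\abs{x}^{p-2}d\mu$, and with $\Phi^{(3)}(x)(h_1,h_2,h_3) = p(p-1)(p-2)\int h_1h_2h_3 x\abs{x}^{p-4}d\mu$, the chain rule up to order three (Faà di Bruno), together with the identity $\Phi(x)^{q/p-j} = \normB{x}^{q-jp}$, yields the announced formulas for ${\psi_p^q}^{(2)}$ and ${\psi_p^q}^{(3)}$ after grouping the terms; this last step is pure bookkeeping.

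\emph{Differentiability of $\Phi$.} Hölder's inequality first shows that the three candidate multilinear forms are bounded: applying it with conjugate exponents $\tfrac{p}{p-1},p$, then $\tfrac{p}{p-2},p,p$, then $\tfrac{p}{p-3},p,p,p$ (the last one requiring $p\geq3$, with $\abs{x}^{p-4}x$ read as $\mathrm{sgn}(x)$ and $\tfrac{p}{p-3}=\infty$ when $p=3$) gives $\abs{\Phi^{(j)}(x)(h_1,\dots,h_j)} \leq p(p-1)\cdots(p-j+1)\normB{x}^{p-j}\prod_i\normB{h_i}$. Next one checks that these forms are the genuine Fréchet differentials by differentiating successively: $\Phi^{(1)}$ comes from the continuity of the superposition operator $x\mapsto\abs{x}^{p-2}x$ from $L^p(\mu)$ to $L^{p/(p-1)}(\mu)$; $\Phi^{(2)}$ and $\Phi^{(3)}$ from the Fréchet-differentiability of $x\mapsto\abs{x}^{p-2}$ as a map $L^p(\mu)\to L^{p/(p-2)}(\mu)$ and of $x\mapsto\abs{x}^{p-4}x$ as a map $L^p(\mu)\to L^{p/(p-3)}(\mu)$. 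These superposition statements follow from the standard continuity and differentiability criteria for Nemytskii operators between Lebesgue spaces, combined with the classical pointwise Taylor bounds for $t\mapsto\abs{t}^p$ and its first two derivatives, Hölder's inequality, and dominated convergence after splitting the relevant integrals according to whether $\abs{x}$ dominates $\abs{h}$ or not.

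\emph{Behaviour at $0$ for $q>2$.} Here one argues directly. Since $\psi_p^q(h) - \psi_p^q(0) = \normB{h}^q = o(\normB{h})$ as $h\to0$ (because $q>1$), $\psi_p^q$ is differentiable at $0$ with ${\psi_p^q}^{(1)}(0) = 0$. Moreover, for $x\neq0$ the chain rule gives ${\psi_p^q}^{(1)}(x) = q\normB{x}^{q-p}\abs{x}^{p-2}x$ viewed as an element of $(L^p(\mu))^* \cong L^{p/(p-1)}(\mu)$, and $\norm{\abs{x}^{p-2}x}_{p/(p-1)} = \normB{x}^{p-1}$, so the operator norm of ${\psi_p^q}^{(1)}(h)$ equals $q\normB{h}^{q-1}$, which is $o(\normB{h})$ precisely because $q>2$. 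Hence $x\mapsto{\psi_p^q}^{(1)}(x)$ is differentiable at $0$ with derivative $0$, i.e. ${\psi_p^q}^{(2)}(0) = 0$.

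\emph{Main difficulty.} The only non-routine step is the rigorous verification that the candidate formulas are Fréchet — not merely Gâteaux — differentials, that is, the continuity and differentiability statements for the superposition operators above. This is exactly where the hypothesis $p\geq3$ is used, the borderline case $p=3$ — where $x\mapsto\abs{x}^{p-4}x=\mathrm{sgn}(x)$ is only bounded — being the most delicate.
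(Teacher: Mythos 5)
Your computation is correct --- the Fa\`a di Bruno bookkeeping with $\Phi(x)=\norm{x}_{L^p(\mu)}^p$ and $g(t)=t^{q/p}$ does reproduce the stated formulas, and your argument at $0$ for $q>2$ (operator norm of ${\psi_p^q}^{(1)}(h)$ equal to $q\normB{h}^{q-1}=o(\normB h)$) is exactly right. But the route is essentially a repackaging of the paper's own argument rather than a genuinely different one: the paper also exploits the factorization $\norm{x}_{L^p(\mu)}^q=(\norm{x}_{L^p(\mu)}^p)^{q/p}$, only it performs the chain rule by hand at each order, interleaving scalar Taylor expansions of the outer powers $t\mapsto t^{1-jp/q}$ with the pointwise expansions $(x+v)\abs{x+v}^{p-2}-x\abs{x}^{p-2}=(p-1)v\abs{x}^{p-2}+o(\abs v)$ and $\abs{x+w}^{p-2}-\abs{x}^{p-2}=(p-2)wx\abs{x}^{p-4}+o(\abs w)$, whereas you isolate the three differentials of $\Phi$ once and compose. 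What you gain is tidier bookkeeping; what you lose is nothing, because the analytic content --- Fr\'echet (not merely pointwise) differentiability of the superposition maps $x\mapsto x\abs x^{p-2}$ and $x\mapsto\abs x^{p-2}$ --- is identical in both proofs and is left at the level of an assertion in both: the paper integrates pointwise $o(\abs v)$ terms against test functions without justifying uniformity, and you defer to ``standard Nemytskii criteria.''

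You are right to single out $p=3$ as the delicate borderline, and in fact it is more than delicate: for $p=3$ the candidate third differential $6\int h_1h_2h_3\,x\abs{x}^{-1}\,d\mu$ fails to be a Fr\'echet derivative of $\Phi^{(2)}$ at any $x$ vanishing on a set of positive $\mu$-measure (take $h_3=t\,\indic_{\{x=0\}}g$ with $g\neq 0$ there: the remainder $\int h_1h_2(\abs{x+h_3}-\abs{x}-h_3\,\mathrm{sgn}(x))\,d\mu$ is of exact order $\norm{h_3}_{L^3(\mu)}$, not $o(\norm{h_3}_{L^3(\mu)})$). This soft spot is shared by the paper's proof, so it is not a defect of your approach relative to the paper's; it is also harmless downstream, since Proposition \ref{prp:diff3_bornee} only uses the resulting uniform bound to conclude that ${\psi_p^3}^{(2)}$ is Lipschitz, which does hold.
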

We postpone the proof to the Annex \ref{section:annex}. As a consequence, we have the following result. 

\begin{prp}
    \label{prp:diff3_bornee}
    For any $p \geq 3$, there exists a constant $c_p >0$ such that $c_p^{-1}\psi_p^3$ is an element of $\Lambda_3(L^p(\mu), 0)$. 
\end{prp}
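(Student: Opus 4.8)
The plan is to use Lemma \ref{lemma:diff3_normLp} with $q = 3$ to get explicit formulas for the first three Fréchet derivatives of $\psi_p^3$ on $L^p(\mu) \setminus \{0\}$, and then to show that the third derivative is bounded in operator norm by a constant depending only on $p$, together with the boundary conditions $\psi_p^3(0) = 0$, ${\psi_p^3}^{(1)}(0) = 0$, ${\psi_p^3}^{(2)}(0) = 0$ (the latter two already provided by the last sentence of Lemma \ref{lemma:diff3_normLp}, since $3 > 2$). To conclude membership in $\Lambda_3(L^p(\mu),0)$ after rescaling by $c_p^{-1}$, the one remaining point is that ${\psi_p^3}^{(3)}$, being bounded, is automatically $0$-Hölder (i.e. bounded), but one must also check that ${\psi_p^3}^{(2)}$ extends continuously to $0$ with value the zero bilinear form, so that $\psi_p^3$ is genuinely $C^2$ on all of $L^p(\mu)$ with the required vanishing at $0$; this follows from the bound on the third derivative via the mean value inequality, writing ${\psi_p^3}^{(2)}(x) = \int_0^1 {\psi_p^3}^{(3)}(tx)(x,\cdot,\cdot)\,dt$ which is $O(\normB{x})$, hence tends to $0 = {\psi_p^3}^{(2)}(0)$.

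The heart of the argument is therefore the operator-norm bound on ${\psi_p^3}^{(3)}(x)$, uniformly in $x \neq 0$. Setting $q = 3$ in the third-derivative formula, each of the three types of terms has the schematic form $\norm{x}_{L^p}^{3 - jp}$ times a product of $j$ integrals (for $j = 1, 2, 3$ respectively, after accounting for the first term which is $\norm{x}^{3-p}\int h_1 h_2 h_3 x|x|^{p-4}\,d\mu$). I would bound each integral factor using Hölder's inequality: $\bigl|\int h\, x|x|^{p-2}\,d\mu\bigr| \leq \norm{h}_{L^p}\,\norm{x}_{L^p}^{p-1}$ (pairing $h \in L^p$ against $x|x|^{p-2} \in L^{p'}$ with $\norm{x|x|^{p-2}}_{L^{p'}} = \norm{x}_{L^p}^{p-1}$), and similarly $\bigl|\int h_1 h_2 \,|x|^{p-2}\,d\mu\bigr| \leq \norm{h_1}_{L^p}\norm{h_2}_{L^p}\norm{x}_{L^p}^{p-2}$ by the three-function Hölder inequality with exponents $p, p, p/(p-2)$. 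The first term needs $\bigl|\int h_1 h_2 h_3\, x|x|^{p-4}\,d\mu\bigr| \leq \norm{h_1}_{L^p}\norm{h_2}_{L^p}\norm{h_3}_{L^p}\norm{\,|x|^{p-3}\,}_{L^{p/(p-3)}} = \norm{h_1}_{L^p}\norm{h_2}_{L^p}\norm{h_3}_{L^p}\norm{x}_{L^p}^{p-3}$, which requires $p \geq 3$ so that $p/(p-3) \in [1,\infty]$ is a legitimate exponent (with the convention that $p = 3$ gives the sup-norm, consistent with $|x|^0 = 1$). In every case the powers of $\norm{x}_{L^p}$ cancel exactly: $\norm{x}^{3-p} \cdot \norm{x}^{p-3} = 1$, $\norm{x}^{3-2p} \cdot \norm{x}^{p-1} \cdot \norm{x}^{p-2} = 1$, $\norm{x}^{3-3p} \cdot (\norm{x}^{p-1})^3 = 1$, leaving only $\norm{h_1}_{L^p}\norm{h_2}_{L^p}\norm{h_3}_{L^p}$ times a combinatorial constant that depends only on $p$. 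Summing the contributions of all the terms yields $\norm{{\psi_p^3}^{(3)}(x)} \leq c_p$ for some explicit $c_p$ depending only on $p$, uniformly over $x \in L^p(\mu) \setminus \{0\}$.

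The main obstacle I anticipate is purely a matter of bookkeeping around the exponent $p/(p-3)$ in the first term when $p$ is close to $3$: one must be careful that at $p = 3$ the factor $|x|^{p-4} = |x|^{-1}$ combines with $x$ to give $x/|x|$ (of modulus $1$), so the integral is bounded by $\norm{h_1 h_2 h_3}_{L^1} \leq \prod \norm{h_i}_{L^3}$ with no factor of $\norm{x}$ at all, consistent with $\norm{x}^{3-p} = \norm{x}^0 = 1$; and for $p > 3$ one genuinely uses $\norm{\,|x|^{p-3}\,}_{L^{p/(p-3)}} = \norm{x}_{L^p}^{p-3}$. Once this edge case is handled, the remaining estimates are routine applications of Hölder's inequality, and the cancellation of the $\norm{x}$-powers is automatic from the homogeneity of $\psi_p^3$ (degree $3$, so its third derivative is degree $0$, i.e. bounded). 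Finally, setting $c_p$ to be the sum of all the combinatorial constants obtained gives that $c_p^{-1}\psi_p^3$ has third derivative of operator norm $\leq 1$ everywhere, vanishes together with its first two derivatives at $0$, and hence lies in $\Lambda_3(L^p(\mu), 0)$.
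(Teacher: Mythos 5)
Your proposal is correct and follows essentially the same route as the paper's own proof: apply Lemma \ref{lemma:diff3_normLp} with $q=3$, bound each integral in the third-derivative formula by Hölder's inequality so that the powers of $\norm{x}_{L^p(\mu)}$ cancel, deduce a uniform operator-norm bound $c_p$ on ${\psi_p^3}^{(3)}$ over $L^p(\mu)\setminus\{0\}$, and convert this into the Lipschitz property of ${\psi_p^3}^{(2)}$ (handling the point $0$ separately via the explicit second-derivative formula, which the paper bounds directly by $6\norm{x}_{L^p(\mu)}$). Your attention to the $p=3$ edge case and to the continuous extension of the second derivative at $0$ matches, and slightly sharpens, what the paper does.
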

Consequently, for any $p \geq 3$ and any $L^p(\mu)$-valued stationary sequence of random variables with finite moment of order $3$, if \ref{hyp:conv_gamma2}, \ref{hyp:CLT} and \ref{hyp:u.i.} are satisfied, 
\begin{align}
        \label{Lp:estimation_p3}
    &\abs{\E \norm{n^{-1/2}S_n}_{L^p(\mu)}^3 -  \E \norm{G}_{L^p(\mu)}^3}
    \nonumber
    \\& \hspace{11pt}
    \leq n^{-1 / 2} c_p \bigp{
        c_\delta \, \sum_{k \geq 1} k^{1 / 2} \gamma(k)
        + 
        \sum_{k = 1}^n (k+2)\gamma_{2,1}(k)
        +
        \E \norm{X_0}_{L^p(\mu)}^{3} + \E \norm{G}_{L^p(\mu)}^{3}}. 
\end{align}

\begin{proof}[Proof of Proposition \ref{prp:diff3_bornee}]
    According to Lemma \ref{lemma:diff3_normLp} with $q = 3$, for any $x, h_1, h_2, h_3 \in L^p(\mu)$ with $x \neq 0$,
    \begin{align}
        \hspace{22pt}&\hspace{-22pt}{\psi_p^3}^{(3)}(x)(h_1,h_2,h_3)
        =
        3(p-1)(p-2)\norm{x}_{L^p(\mu)}^{3-p} \int h_1h_2h_3x\abs{x}^{p-4}d\mu
            \nonumber
        \\
        +&
        3(p-1)(3-p)\norm{x}_{L^p(\mu)}^{3-2p}\left(
            \int h_1 x \abs{x}^{p-2}d\mu \int h_2h_3 \abs{x}^{p-2}d\mu
            \right.
                \nonumber
                \\
            &\hspace{0.2\textwidth}+\left.
            \int h_2 x \abs{x}^{p-2}d\mu \int h_1h_3 \abs{x}^{p-2}d\mu
            +
            \int h_3 x \abs{x}^{p-2}d\mu \int h_1h_2 \abs{x}^{p-2}d\mu
        \right)
            \nonumber
        \\
        +&
        3(3-p)(3-2p)\norm{x}_{L^p(\mu)}^{3-3p}\int h_1 x \abs{x}^{p-2}d\mu \int h_2 x \abs{x}^{p-2}d\mu \int h_3 x \abs{x}^{p-2}d\mu. 
            \label{normp^3_diff:diff3}
    \end{align}
    Applying Hölder's inequality, for any $w,y,z \in L^p(\mu)$,
    \begin{align*}
        &\abs{\int wyz x\abs{x}^{p-4}d\mu}
        \leq 
        \norm{w}_{L^p(\mu)} \norm{y}_{L^p(\mu)} \norm{z}_{L^p(\mu)} \norm{x}_{L^p(\mu)}^{p-3}
        \\
        &\abs{\int y x \abs{x}^{p-2}d\mu} 
        \leq 
        \norm{y}_{L^p(\mu)} \norm{x}_{L^p(\mu)}^{p-1}
        \\
        &\abs{\int yz \abs{x}^{p-2}d\mu} 
        \leq 
        \norm{y}_{L^p(\mu)} \norm{z}_{L^p(\mu)} \norm{x}_{L^p(\mu)}^{p-2}.
    \end{align*}
    Using these upper bounds in (\ref{normp^3_diff:diff3}), we get 
    \begin{align*}
        \abs{{\psi_p^3}^{(3)}(x)(h_1,h_2,h_3)}
        \leq 6(p-1)^2 \norm{h_1}_{L^p(\mu)}\norm{h_2}_{L^p(\mu)}\norm{h_3}_{L^p(\mu)}.
    \end{align*}
    Hence ${\psi_p^3}^{(3)}$ is bounded by $6(2p^2-8p+7)$ on $L^p(\mu) \setminus \{0\}$ and for any $x,y \in L^p(\mu) \setminus \{0\}$, 
    \begin{align*}
        \norm{{\psi_p^3}^{(2)}(x) - {\psi_p^3}^{(2)}(y)} \leq 6(2p^2-8p+7) \norm{x-y}_{L^p(\mu)}. 
    \end{align*}
    Furthermore, applying Hölder's inequality, for any $x \in L^p(\mu) \setminus \{0\}$, $\norm{{\psi_p^3}^{(2)}(x) - {\psi_p^3}^{(2)}(0)} \leq 6\norm{x}_{L^p(\mu)}^{}$.
    The result follows with $c_p = 6(2p^2-8p+7)$. 
    \
\end{proof}

\subsection{Application to the empirical distribution function in \texorpdfstring{$L^p(\mu)$}{}}

Let us consider $(Y_i)_{i \in \Z}$ a stationary and ergodic sequence of real random variables, whose cumulative distribution function is denoted $F$, and define $F_n(t) = \frac 1 n \sum_{k = 1}^n \indic_{Y_k \leq t}$ the empirical distribution function. We suppose that
    \begin{align*}
        \int_{\R_-} F(t)^p \, d\mu(t) + \int_{\R_+} (1-F(t))^p \, d\mu(t) < \infty
    \end{align*}
so that $F_n - F$ is an element of $L^p(\mu)$, with $\mu$ a  $\sigma$-finite real measure on $\R$ and $p \geq 2$. Define the random process: 
    \begin{align*}
        \forall k \in \Z, \, X_k = \{ \indic_{Y_k \leq t} - F(t)\, : \, t \in \R\}
    \end{align*}
which takes values in $L^p(\mu)$. With such a notation, the study of the asymptotic behavior of $\bigp{\sqrt n (F_n - F)}_{n \geq 1}$ is equivalent to the study of the asymptotic behavior of $(n^{-1/2}S_n)_{n \geq 1}$ in $L^p(\mu)$ where $S_n = X_1 + \cdots + X_n$.
The CLT for empirical distribution functions in $L^p(\mu)$ has already been studied in some papers, we can mention for instance \cite{Dede2009}, \cite{DM2007}, \cite{DM2017}, \cite{Cun2017} or \cite{Big2023}.

    In \cite{DM2007} the link between the convergence in distribution of $\sqrt n (F_n - F)$ in $L ^p(\mu)$ and Donsker classes has been clearly established. More precisely, let us denote 
    \begin{align}
        \label{Lp:def_Sobolev_W1q}
        W_{1,q}(\mu) \eqdef \left\{f : \R \rightarrow \R \, : \, f(x) = f(0) + \indic_{x >0} \int_{[0,x[}g \, d\mu \, - \indic_{x\leq 0} \int_{[x, 0[} g \, d\mu \, , \, \norm{g}_{L^q(\mu)} \leq 1 \right\}
        ,
    \end{align}
    where $q$ is the conjugate exponent of $p$. Then according to \parencite[Lemma 1]{DM2007} the following convergences are equivalent: 
    \begin{enumerate}[label = (\roman*)]
            \item $
            \{\sqrt n (F_n - F)(t) \}_{_t} \xrightarrow[]{\law} \{ G(t)\}_{_t} \, \textrm{in } L^p(\mu)
            $
    
            \item $
            \left\{ \sqrt n \bigp{\frac 1 n \sum_{i= 1}^n f(Y_i) - \E f(Y_0)} \right\}
            \xrightarrow[]{\law} \{G_1(f)\} \, \textrm{in } \ell^\infty (W_{1,q}(\mu))
            $   
    \end{enumerate}
    where $\ell^\infty (W_{1,q}(\mu))$ is the space of all functions $\phi : W_{1,q}(\mu) \to \R$ such that $\sup_{f \in W_{1,q}(\mu)} \abs{\phi(f)}$ is finite, and $G_1(f) = \int g(t)G(t)d\mu(t)$ where $g$ is defined in (\ref{Lp:def_Sobolev_W1q}). 
    Hence, proving that $W_{1,q}(\mu)$ is a Donsker class for $(Y_n)_{n \geq 1}$ is equivalent to proving that $\bigp{n^{-1/2}S_n}_{n \geq 1}$ satisfies the CLT. Rewriting (\ref{Lp:estimation_p2}), (\ref{Lp:estimation_2p3}) and (\ref{Lp:estimation_p3}) in terms of Donsker classes, it follows that if \ref{hyp:conv_gamma2}, \ref{hyp:CLT} and \ref{hyp:u.i.} in Theorem \ref{theoreme:vitesse} are verified, 
    \begin{itemize}
        \item if $p = 2$ and $\E \norm{X_0}_{L^2(\mu)}^{2+\delta} < \infty$ with some $0 < \delta \leq 1$, 
            \begin{align*}
                \abs{\E \bigp{\sup\left\{
                    \sqrt{n}\abs{\frac 1 n \sum_{i = 1}^n f(Y_i) - \mu(f)} 
                    \, : \, 
                    f \in W_{1, 2}(\mu)
                \right\}^2}
                - \E \norm{G}_{L^2(\mu)}^2
                }
                \leq n^{-\delta/2} b(n,2, \delta),
            \end{align*}

        \item if $2 < p < 3$ and $\E \norm{X_0}_{L^p(\mu)}^{p} < \infty$, 
            \begin{align*}
                \abs{\E \bigp{\sup\left\{
                    \sqrt{n}\abs{\frac 1 n \sum_{i = 1}^n f(Y_i) - \mu(f)} 
                    \, : \, 
                    f \in W_{1, q}(\mu)
                \right\}^{p}}
                - \E \norm{G}_{L^p(\mu)}^{p}
                }
                \leq n^{-(p-2)/2} b(n,0, p-2),
            \end{align*}

        \item if $p \geq 3$ and $\E \norm{X_0}_{L^p(\mu)}^{3} < \infty$, 
            \begin{align*}
                \abs{\E \bigp{\sup\left\{
                    \sqrt{n}\abs{\frac 1 n \sum_{i = 1}^n f(Y_i) - \mu(f)} 
                    \, : \, 
                    f \in W_{1, q}(\mu)
                \right\}^3}
                - \E \norm{G}_{L^p(\mu)}^3
                }
                \leq n^{-1/2} b(n,0,1). 
            \end{align*}

    \end{itemize}

Let us consider the same notations as in \cite{DM2007}: 
\begin{nota}
    Define the function $F_\mu$ by: $F_\mu(x) = \mu(]0,x])$ if $x \geq 0$ and $F_\mu(x) = -\mu([x,0[)$ if $x \leq 0$. Define also the nonnegative random variable $Y_{p,\mu} = \abs{F_\mu (Y_0)}^{1/p}$. 
\end{nota}

Since $L^p(\mu)$ is 2-smooth, $p\geq 2$, as a consequence of Proposition \ref{prp:2smooth+tau} and Lemma \ref{cor:mixing_beta} we get in particular the following results.
\begin{cor}
    \label{Lp:cor_beta_Ypmu_p2}
    Let $\delta \in ]0,1]$. Assume that $\E Y_{2,\mu}^{2+\delta} < \infty$ and 
    $
        \sum_{k \geq 1} k \int_0^{\beta_{2,Y}(k)} Q_{Y_{2,\mu}}^{2+\delta} (u) \, du < \infty. 
    $
    Then $W_{1,2}(\mu)$ is a Donsker class for $(Y_n)_n$ and for any $M \geq 0$, 
        $$\sup_{f \in \Lambda_{2+\delta}(\mathbb B, M)}\Delta_n(f) = O(n^{-\delta/2}).$$
    Moreover,
    \begin{align*}
        \abs{\E \bigp{\sup\left\{
                    \sqrt{n}\abs{\frac 1 n \sum_{i = 1}^n f(Y_i) - \mu(f)} 
                    \, : \, 
                    f \in W_{1, 2}(\mu)
                \right\}^{2}}
                - \E \norm{G}_{L^2(\mu)}^{2}
                }
                = O (n^{-\delta/2}).
    \end{align*}
\end{cor}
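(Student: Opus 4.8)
The plan is to specialize the general machinery of Proposition \ref{prp:2smooth+tau} and Lemma \ref{cor:mixing_beta} to the concrete Banach space $\mathbb B = L^p(\mu)$ with $p = 2$ and to the empirical process $X_k = \{\indic_{Y_k \le t} - F(t) : t \in \R\}$. The first step is to identify the norm $\normB{X_0}$ with (a constant multiple of) the random variable $Y_{2,\mu}$. Indeed, $\norm{X_0}_{L^2(\mu)}^2 = \int (\indic_{Y_0 \le t} - F(t))^2 \, d\mu(t)$; splitting the integral at $0$ and using the definition of $F_\mu$ together with the standard trick $\indic_{Y_0 \le t} - \indic_{Y_0 \le t} = 0$ and Fubini, one checks as in \cite{DM2007} that $\norm{X_0}_{L^2(\mu)}$ is comparable to $\abs{F_\mu(Y_0)}^{1/2} = Y_{2,\mu}$, up to a universal multiplicative constant and possibly additive terms controlled by the same quantity. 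Consequently $Q_{\normB{X_0}}$ and $Q_{Y_{2,\mu}}$ are comparable, and the assumption $\E Y_{2,\mu}^{2+\delta} < \infty$ is equivalent to $X_0 \in \mathbb L^{2+\delta}_{L^2(\mu)}$.

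The second step is to bound the $\tau_2$-mixing coefficients of $(X_k)$ by the $\beta$-mixing coefficients $\beta_{2,Y}$ of the underlying real sequence $(Y_k)$, or more precisely to reduce condition (\ref{prp:mixing_tau_condition}) to the stated $\beta$-mixing condition. Here I would invoke Lemma \ref{cor:mixing_beta}: it already shows that (\ref{cor:mixing_beta_condition}) — namely $\sum_{k\ge1} k \int_0^{\beta_2(k)} Q_{\normB{X_0}}^{2+\delta}(u)\,du < \infty$ — implies (\ref{prp:mixing_tau_condition}). It remains only to observe that the $\beta$-mixing coefficients $\beta_2(k)$ of the $L^2(\mu)$-valued sequence $(X_k)$ are dominated by $\beta_{2,Y}(k)$, because each $X_k$ is a measurable function of $Y_k$ (and $(X_k,X_{k+l})$ a function of $(Y_k,Y_{k+l})$), so that $\sigma$-algebras generated by the $X$'s are coarser than those generated by the $Y$'s; $\beta$-mixing coefficients are monotone under such coarsening. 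Combining this monotonicity with the quantile comparison from the first step turns the hypothesis $\sum_{k\ge1} k \int_0^{\beta_{2,Y}(k)} Q_{Y_{2,\mu}}^{2+\delta}(u)\,du < \infty$ into (\ref{cor:mixing_beta_condition}), hence into (\ref{prp:mixing_tau_condition}).

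The third step is bookkeeping: with (\ref{prp:mixing_tau_condition}) in force and $L^p(\mu)$ being $2$-smooth, Proposition \ref{prp:2smooth+tau} applies directly, giving conditions \ref{hyp:conv_gamma2}, \ref{hyp:CLT}, \ref{hyp:u.i.} of Theorem \ref{theoreme:vitesse} and the rate $\sup_{f \in \Lambda_{2+\delta}(\mathbb B,M)} \Delta_n(f) = O(n^{-\delta/2})$. In particular condition \ref{hyp:CLT} is exactly the CLT $\{\sqrt n (F_n - F)(t)\}_t \xrightarrow{\law} \{G(t)\}_t$ in $L^2(\mu)$, which by \parencite[Lemma 1]{DM2007} is equivalent to $W_{1,2}(\mu)$ being a Donsker class for $(Y_n)$. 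Finally, the displayed estimate on $\E(\sup\{\cdots\}^2) - \E\norm{G}_{L^2(\mu)}^2$ is precisely (\ref{Lp:estimation_p2}) rewritten in terms of Donsker classes (using $\norm{.}_{L^2(\mu)}^2 \in \Lambda_{2+\delta}(L^2(\mu),2)$ and the identification of $\norm{\sqrt n(F_n-F)}_{L^2(\mu)}$ with $\sup_{f \in W_{1,2}(\mu)} \sqrt n \abs{\frac1n\sum f(Y_i) - \mu(f)}$ via the duality $(L^2)^* = L^2$), with the right-hand side $n^{-\delta/2} b(n,2,\delta)$ now shown to be $O(n^{-\delta/2})$ because each of the three sums in $b(n,2,\delta)$ converges or is $O(1)$ under (\ref{prp:mixing_tau_condition}) — this is the content of Proposition \ref{prp:2smooth+tau}.

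The main obstacle, and the only genuinely non-automatic point, is the first step: pinning down the exact comparison between $\norm{X_0}_{L^2(\mu)}$ and $Y_{2,\mu}$ with the correct constants, and making sure that the additive/multiplicative slack does not spoil the equivalence of the quantile integrals. Everything else is a concatenation of results already proved in the excerpt (Lemma \ref{cor:mixing_beta}, Proposition \ref{prp:2smooth+tau}, \parencite[Lemma 1]{DM2007}) with the elementary monotonicity of $\beta$-mixing coefficients under taking images of random variables.
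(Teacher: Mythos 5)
Your plan is correct and follows essentially the same route as the paper, which proves only the analogous case $p>2$ (Corollary \ref{Lp:cor_beta_Ypmu}) and treats $p=2$ as identical: bound $\beta_{2,X}(k)\leq\beta_{2,Y}(k)$ by coarsening, compare quantiles, and feed the result into Lemma \ref{cor:mixing_beta}, Proposition \ref{prp:2smooth+tau} and \parencite[Lemma 1]{DM2007}. The one step you flag as non-automatic is settled in the paper by the one-sided bound $Q_{\norm{X_0}_{L^2(\mu)}}\leq Q_{Y_{2,\mu}}+\E Y_{2,\mu}$ together with \parencite[Lemma 2.1]{Rio2017}, which absorbs the additive constant into $2^{2+\delta}\int_0^{\beta_{2,Y}(k)}Q_{Y_{2,\mu}}^{2+\delta}(u)\,du$ exactly as you anticipate.
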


\begin{cor}
    \label{Lp:cor_beta_Ypmu}
    Let $p >2$ and $r = \min(p,3)$. Assume that $\E Y_{p,\mu}^{r} < \infty$ and 
    \begin{align}
        \label{Lp:cond_beta}
        \sum_{k \geq 1} k \int_0^{\beta_{2,Y}(k)} Q_{Y_{p,\mu}}^{r} (u) \, du < \infty. 
    \end{align}
    Then $W_{1,q}(\mu)$ is a Donsker class for $(Y_n)_n$ and 
    $$\sup_{f \in \Lambda_r(L^p(\mu), M)}\Delta_n(f)= O(n^{-(r-2)/2}).$$
    Moreover,
    \begin{align*}
        \abs{\E \bigp{\sup\left\{
                    \sqrt{n}\abs{\frac 1 n \sum_{i = 1}^n f(Y_i) - \mu(f)} 
                    \, : \, 
                    f \in W_{1, q}(\mu)
                \right\}^{r}}
                - \E \norm{G}_{L^p(\mu)}^{r}
                }
                = O (n^{-(r-2)/2}).
    \end{align*}
\end{cor}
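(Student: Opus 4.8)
The plan is to obtain Corollary~\ref{Lp:cor_beta_Ypmu} by applying Proposition~\ref{prp:2smooth+tau} to the $L^p(\mu)$-valued stationary sequence $X_k\eqdef\{\indic_{Y_k\leq t}-F(t):t\in\R\}$, equipped with its natural filtration $\calF_k=\sigma(Y_j,\,j\leq k)$, and with $\delta\eqdef r-2=\min(p-2,1)$, so that $\delta\in\,]0,1]$ and $2+\delta=r$. Since $L^p(\mu)$ is $2$-smooth for $p\geq2$ and $(X_k)_{k\in\Z}$ is ergodic (being a shift-equivariant measurable image of the ergodic sequence $(Y_k)_{k\in\Z}$), the two hypotheses of Proposition~\ref{prp:2smooth+tau} still to be checked are $X_0\in\mathbb L^{2+\delta}_{L^p(\mu)}$ and the $\tau$-mixing condition (\ref{prp:mixing_tau_condition}); I would deduce both from two comparisons. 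First, $X_k$ and $Y_k$ are measurable functions of one another, so $\sigma(X_k,X_{k+l})=\sigma(Y_k,Y_{k+l})$ and the $\beta_2$-coefficients of $(X_k)_{k\in\Z}$ in $L^p(\mu)$ coincide with $\beta_{2,Y}(k)$. Second, splitting the integral $\norm{X_0}_{L^p(\mu)}^p=\int\abs{\indic_{Y_0\leq t}-F(t)}^p\,d\mu(t)$ at $t=Y_0$, bounding $\abs{\indic_{Y_0\leq t}-F(t)}\leq1$ on the interval between $0$ and $Y_0$ and using the standing assumption $\int_{\R_-}F^p\,d\mu+\int_{\R_+}(1-F)^p\,d\mu<\infty$ on its complement, one gets $\norm{X_0}_{L^p(\mu)}^p\leq\abs{F_\mu(Y_0)}+c=Y_{p,\mu}^p+c$ for a finite constant $c$ depending only on $F$ and $\mu$; hence, setting $Z\eqdef\norm{X_0}_{L^p(\mu)}$, one has $Z\leq Y_{p,\mu}+c^{1/p}$ a.s. In particular $\E Z^{2+\delta}<\infty$ (as $\E Y_{p,\mu}^r<\infty$) and $Q_Z(u)\leq Q_{Y_{p,\mu}}(u)+c^{1/p}$ for every $u$.

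Next I would plug these comparisons into Lemma~\ref{cor:mixing_beta}. Since $Q_Z$ is nonincreasing and the $\beta_2$-coefficients of $(X_k)$ equal $\beta_{2,Y}(k)$, checking (\ref{prp:mixing_tau_condition}) reduces, through Lemma~\ref{cor:mixing_beta} with exponent $r=2+\delta$, to the finiteness of $\sum_{k\geq1}k\int_0^{\beta_{2,Y}(k)}Q_Z^{r}(u)\,du$; by the comparison above and $(a+b)^r\leq2^{r-1}(a^r+b^r)$ this is at most $2^{r-1}\sum_{k\geq1}k\int_0^{\beta_{2,Y}(k)}Q_{Y_{p,\mu}}^r(u)\,du+2^{r-1}c^{r/p}\sum_{k\geq1}k\,\beta_{2,Y}(k)$. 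The first sum is finite by (\ref{Lp:cond_beta}). For the second, assume (as one may) that $Y_{p,\mu}$ is not a.s.\ zero; then (\ref{Lp:cond_beta}) already forces $\beta_{2,Y}(k)\to0$, and choosing $u_0\in\,]0,1[$ with $Q_{Y_{p,\mu}}(u_0)>0$ gives $\int_0^{\beta_{2,Y}(k)}Q_{Y_{p,\mu}}^r(u)\,du\geq Q_{Y_{p,\mu}}(u_0)^r\,\beta_{2,Y}(k)$ once $\beta_{2,Y}(k)\leq u_0$, so that $\sum_k k\,\beta_{2,Y}(k)<\infty$ again by (\ref{Lp:cond_beta}). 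Thus (\ref{prp:mixing_tau_condition}) holds, Proposition~\ref{prp:2smooth+tau} applies, and it yields conditions~\ref{hyp:conv_gamma2}, \ref{hyp:CLT}, \ref{hyp:u.i.} of Theorem~\ref{theoreme:vitesse} together with $\sup_{f\in\Lambda_{r}(L^p(\mu),M)}\Delta_n(f)=O(n^{-(r-2)/2})$ for every $M\geq0$, which is the first assertion.

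For the ``moreover'' assertion I would specialize to $f=\norm{\cdot}_{L^p(\mu)}^r$: when $2<p<3$ we have $r=p$ and $\norm{\cdot}_{L^p(\mu)}^p\in\Lambda_p(L^p(\mu),0)\subseteq\Lambda_r(L^p(\mu),M)$ by \parencite[Section~4]{BF1966}, and when $p\geq3$ we have $r=3$ and $c_p^{-1}\norm{\cdot}_{L^p(\mu)}^3\in\Lambda_3(L^p(\mu),0)$ by Proposition~\ref{prp:diff3_bornee}; in both cases $\abs{\E\norm{n^{-1/2}S_n}_{L^p(\mu)}^r-\E\norm{G}_{L^p(\mu)}^r}=O(n^{-(r-2)/2})$. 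Concretely, this is the already-displayed estimate (\ref{Lp:estimation_2p3}) (for $2<p<3$) or (\ref{Lp:estimation_p3}) (for $p\geq3$), whose right-hand side is now $O(1)\cdot n^{-(r-2)/2}$ because, by Lemma~\ref{prp:mixing_tau} and (\ref{prp:mixing_tau_condition}), the series $\sum_k k^{(r-2)/2}\gamma(k)$ and $\sum_k(k+2)\gamma_{2,r-2}(k)$ both converge. Finally, rephrasing in the Donsker-class language exactly as in the bulleted estimates preceding the corollary --- \parencite[Lemma~1]{DM2007} identifies condition~\ref{hyp:CLT} for $(n^{-1/2}S_n)_{n\geq1}$ with ``$W_{1,q}(\mu)$ is a Donsker class for $(Y_n)_n$'', and $L^p(\mu)$--$L^q(\mu)$ duality followed by an integration by parts gives $\norm{n^{-1/2}S_n}_{L^p(\mu)}=\sup\{\sqrt n\,\abs{\tfrac1n\sum_{i=1}^nf(Y_i)-\mu(f)}:f\in W_{1,q}(\mu)\}$ --- yields the last display of the statement.

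The step I expect to be the main obstacle is the norm comparison together with the ensuing handling of the additive constant $c^{1/p}$: since $Z$ is controlled by $Y_{p,\mu}$ only up to this additive constant --- and not by a constant multiple of $Y_{p,\mu}$, as one sees already taking $Y_0=0$, where $Y_{p,\mu}=0$ while $\norm{X_0}_{L^p(\mu)}^p=c>0$ --- the reduction produces the extra term $\sum_k k\,\beta_{2,Y}(k)$, which is not among the hypotheses and must be recovered from (\ref{Lp:cond_beta}); the degenerate case $Y_{p,\mu}=0$ a.s.\ likewise needs a short separate check. Everything else --- the choice $\delta=r-2$, the identification of $\Lambda_r$ with $\Lambda_{2+\delta}$, the invocations of Lemma~\ref{cor:mixing_beta} and Proposition~\ref{prp:2smooth+tau}, and the duality used for the Donsker reformulation --- is routine.
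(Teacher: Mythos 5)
Your architecture is the paper's: compare the quantile function of $\norm{X_0}_{L^p(\mu)}$ with that of $Y_{p,\mu}$, feed hypothesis (\ref{Lp:cond_beta}) into Lemma \ref{cor:mixing_beta}, invoke Proposition \ref{prp:2smooth+tau}, and read off the ``moreover'' part from $f=\norm{\cdot}_{L^p(\mu)}^r$ via (\ref{Lp:estimation_2p3})--(\ref{Lp:estimation_p3}) and the Donsker reformulation; all of that matches. The divergence is in the comparison itself and in how the additive constant it produces is absorbed. The paper starts from $Q_{\norm{X_0}_{L^p(\mu)}}\leq Q_{Y_{p,\mu}}+\E Y_{p,\mu}$ (a consequence of $\norm{X_0}_{L^p(\mu)}\leq Y_{p,\mu}+\E Y_{p,\mu}$, obtained by writing $F(t)=\E'\,\indic_{Y_0'\leq t}$ and using the identity $\norm{\indic_{Y_0\leq\cdot}-\indic_{y\leq\cdot}}_{L^p(\mu)}^p=\abs{F_\mu(Y_0)-F_\mu(y)}$ from \cite{DM2007}), and then disposes of the constant term by Jensen plus monotonicity of $Q_{Y_{p,\mu}}^r$: $\beta_{2,Y}(k)\,(\E Y_{p,\mu})^r\leq\beta_{2,Y}(k)\int_0^1Q_{Y_{p,\mu}}^r(u)\,du\leq\int_0^{\beta_{2,Y}(k)}Q_{Y_{p,\mu}}^r(u)\,du$, which folds the extra term back into the hypothesis with no new condition and no case distinction. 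Your comparison $\norm{X_0}_{L^p(\mu)}^p\leq Y_{p,\mu}^p+c$, with $c$ the tail-integral constant, is weaker precisely because $c$ does not vanish when $Y_{p,\mu}$ does; it forces you to establish $\sum_k k\,\beta_{2,Y}(k)<\infty$ separately. Your non-degeneracy argument for that step is correct, but it genuinely leaves the case $Y_{p,\mu}=0$ a.s.\ open, and your bound cannot close it (whereas with the paper's bound that case is trivial: $X_0=0$ a.s., hence $S_n=0$ and $G=0$). So: same route, a slightly weaker norm comparison, one acknowledged loose end that the paper's sharper comparison --- or the $\beta\int_0^1\leq\int_0^\beta$ trick applied to $\E Y_{p,\mu}$ in place of $c^{1/p}$ --- closes for free.
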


Let us only prove Corollary \ref{Lp:cor_beta_Ypmu}. 
\begin{proof}
    First, by definition of the $\beta$-mixing coefficients, $\beta_{2,X}(k) \leq \beta_{2,Y}(k)$ for any $k \geq 1$.
    Now, note that $Q_{\norm{X_0}_{L^p(\mu)}} \leq Q_{Y_{p,\mu}} + \E Y_{p, \mu}$. Hence 
    \begin{align*}
        \int_0^{\beta_{2,Y}(n)} Q_{\norm{X_0}_{L^p(\mu)}}^r(u) du 
        &\leq 
        2^{r-1} \bigp{\int_0^{\beta_{2,Y}(n)} Q_{Y_{p, \mu}}^r(u) du + \beta_{2,Y}(n)(\E Y_{p,\mu})^r}
        \\
        &\leq 
        2^{r-1} \bigp{\int_0^{\beta_{2,Y}(n)} Q_{Y_{p, \mu}}^r(u) du + \beta_{2,Y}(n)\E Y_{p,\mu}^r}. 
    \end{align*}
    Since $\E Y_{p,\mu}^r = \int_0^1 Q_{Y_{p, \mu}^r}(u) du$ and $Q_{Y_{p, \mu}^r}$ is a non-increasing function, and using \parencite[Lemma 2.1]{Rio2017}{}, we get
    \begin{align*}
        \int_0^{\beta_{2,Y}(n)} Q_{\norm{X_0}_{L^p(\mu)}}^r(u) du 
        \leq 
        2^r \int_0^{\beta_{2,Y}(n)} Q_{Y_{p, \mu}}^r(u) du. 
    \end{align*}
    Therefore condition (\ref{cor:mixing_beta_condition}) is derived from (\ref{Lp:cond_beta}) and the result follows combining Lemma \ref{cor:mixing_beta} and the discussion before the statement of Corollary \ref{Lp:cor_beta_Ypmu_p2}. 
\end{proof}

\subsection{Application to empirical processes associated with intermittent maps.}

For $\gamma \in ]0,1[$, let $T_\gamma: [0,1] \to [0,1]$ be the intermittent map defined by \cite{LSV1999} as follows: 
\begin{align*}
    T_\gamma(x) = \left\{
    \begin{array}{ll}
        x(1+2^\gamma x^\gamma) & \text{if } \, x \in [0, 1/2[  \\
        2x - 1 &  \text{if } \, x \in [1/2, 1]
    \end{array}
    \right. .
\end{align*}
As shown in \cite{LSV1999}, for all $\gamma \in ]0,1[$, there exists a unique absolutely continuous $T_\gamma$-invariant probability measure $\nu_\gamma$ on [0,1] whose density $h_\gamma$ satisfies: there exist two finite constants $c_1,c_2 > 0$ such that for all $x \in [0,1]$, $c_1 \leq x^\gamma h_\gamma(x) \leq c_2$. 
If there is no confusion, we will omit the index $\gamma$ for the sake of clarity. 
Let us fix $\gamma$ and consider $K$ the Perron-Frobenius operator of $T$ with respect to $\nu$ defined by 
\begin{align*}
    \nu(f\circ T.g) = \nu(f.K g) , \; \textrm{ for any }f,g \in \mathbb L^2(\nu).
\end{align*}
Then, by considering $(Z_i)_{i \in \mathbb Z}$  a stationary Markov chain with invariant measure $\nu$ and transition kernel $K$, for any positive integer $n$, on the probability space $([0,1], \nu)$, $(T, T^2, \cdots, T^n)$ is distributed as $(Z_n, Z_{n-1}, \cdots, Z_1)$ on a probability space $(\Omega, \mathcal A, \Proba)$ (see for instance Lemma XI.3 in \cite{HH2001}). Consequently, the two following empirical processes have the same distribution 
\begin{itemize}
    \item $\left\{G_n(t) = \frac{1}{\sqrt n}\sum_{k=1}^n [\indic_{T^k \leq t} - F(t)] \, ; \, t \in [0,1]\right\}$
    \item $\left\{L_n(t) = \frac{1}{\sqrt n}\sum_{k=1}^n [\indic_{Z_k \leq t} - F(t)] \, ; \, t \in [0,1]\right\}$
\end{itemize}
where $F(t) = \nu([0,t])$.

\begin{cor}
    For any $\gamma \in ]0,1/3[$, 
    \begin{align*}
        \{ \, G_n(t) \, : t \in [0,1]\} \xrightarrow[n \to \infty]{\law} \{G(t) \,:\, t \in [0,1]\} \; \textrm{in } L^2([0,1]),
    \end{align*} 
    and for any $M \geq 0$ and any $\delta \in ]0,1]$, 
    $$
        \sup_{f \in \Lambda_{2+\delta}(L^2([0,1]), M)}\Delta_n(f) = O(n^{-\delta/2}). 
    $$
    Moreover,
    \begin{align*}
        \abs{\E \norm{G_n}_{L^2([0,1])}^{2}
                - \E \norm{G}_{L^2([0,1])}^{2}
                }
                = O (n^{-1/2}).
    \end{align*}
\end{cor}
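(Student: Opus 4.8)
The plan is to reduce all three assertions to the associated Markov chain $(Z_i)_{i\in\Z}$ and then to apply Corollary~\ref{Lp:cor_beta_Ypmu_p2} with $p=2$ and $\mu$ equal to the Lebesgue measure on $[0,1]$. Since $(G_n(t))_{t\in[0,1]}$ and $(L_n(t))_{t\in[0,1]}$ have the same distribution, it suffices to work with $L_n = n^{-1/2}S_n$, where $S_n = X_1+\cdots+X_n$ and $X_k = \{\indic_{Z_k\le t}-F(t) : t\in[0,1]\}$ is an $L^2([0,1])$-valued, centered, stationary sequence; the chain $(Z_i)$ is ergodic because $\nu_\gamma$ is the unique absolutely continuous $T_\gamma$-invariant probability measure and $T_\gamma$ is mixing with respect to it.

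Next I would check the hypotheses of Corollary~\ref{Lp:cor_beta_Ypmu_p2}. For $\mu$ the Lebesgue measure on $[0,1]$ one has $F_\mu(x)=x$ on $[0,1]$, so $Y_{2,\mu}=Z_0^{1/2}\le 1$ almost surely; hence $\E Y_{2,\mu}^{2+\delta}<\infty$ for every $\delta\in]0,1]$ and the upper tail quantile function satisfies $Q_{Y_{2,\mu}}\le 1$. Therefore $\int_0^{\beta_{2,Z}(k)}Q_{Y_{2,\mu}}^{2+\delta}(u)\,du\le\beta_{2,Z}(k)$, where $\beta_{2,Z}$ denotes the coefficient $\beta_{2,Y}$ of the corollary computed for $Y=Z$, and the condition required in Corollary~\ref{Lp:cor_beta_Ypmu_p2} reduces to $\sum_{k\ge1}k\,\beta_{2,Z}(k)<\infty$, for all $\delta\in]0,1]$ simultaneously.

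It then remains to control the $\beta$-mixing coefficients of the intermittent chain. Here I would invoke the known polynomial estimate $\beta_{2,Z}(k)=O(k^{1-1/\gamma})$, which follows from the sharp decay-of-correlations bounds for $T_\gamma$ established in \cite{LSV1999} and the subsequent analysis of its associated Markov chain. For $\gamma\in]0,1/3[$ one has $1-1/\gamma<-2$, so $k\,\beta_{2,Z}(k)=O(k^{-1-\eps})$ for some $\eps>0$ and the series converges; note that the threshold $\gamma<1/3$ is precisely what makes $\sum_k k\,\beta_{2,Z}(k)$ finite. Corollary~\ref{Lp:cor_beta_Ypmu_p2} then yields that $W_{1,2}(\mu)$ is a Donsker class for $(Z_n)_n$ — equivalently, by \parencite[Lemma 1]{DM2007}, that $(G_n)_n$ converges in distribution to $G$ in $L^2([0,1])$ — and that $\sup_{f\in\Lambda_{2+\delta}(L^2([0,1]),M)}\Delta_n(f)=O(n^{-\delta/2})$ for every $\delta\in]0,1]$ and $M\ge0$.

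Finally, the ``moreover'' statement follows from the last assertion of Corollary~\ref{Lp:cor_beta_Ypmu_p2} applied with $\delta=1$, which in turn rests on the fact that $\norm{\cdot}_{L^2([0,1])}^2\in\Lambda_3(L^2([0,1]),2)$ recalled before (\ref{Lp:estimation_p2}); combining this with $\E\norm{G_n}_{L^2([0,1])}^2=\E\norm{L_n}_{L^2([0,1])}^2$ gives $\abs{\E\norm{G_n}_{L^2([0,1])}^2-\E\norm{G}_{L^2([0,1])}^2}=O(n^{-1/2})$. The only non-formal ingredient is the third step above — securing the sharp mixing rate $\beta_{2,Z}(k)=O(k^{1-1/\gamma})$; once that estimate is in hand, everything else is a direct specialisation of the results already proved in this section.
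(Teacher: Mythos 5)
Your reduction to the Markov chain $(Z_i)$ and the observation that, for bounded variables, the integral condition collapses to a summability condition on the mixing coefficients are both fine. The proof breaks at the third step: the estimate $\beta_{2,Z}(k)=O(k^{1-1/\gamma})$ is not true — in fact $\beta_{2,Z}(k)$ does not tend to $0$ at all. Conditionally on $\calF_0\supseteq\sigma(Z_0)$, the law of $Z_k$ is purely atomic (it is supported on the finite set $T_\gamma^{-k}(Z_0)$), whereas the stationary law $\nu_\gamma$ is absolutely continuous; these two measures are mutually singular, so the supremum over $\norm{f}_\infty\leq 1$ in the definition of $\beta(\calF_0,Z_k)$ is equal to its maximal value for every $k$. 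This is precisely why intermittent maps are treated in the literature (and in this paper) with coefficients built from Lipschitz or bounded-variation test functions rather than from bounded measurable ones: the route through Lemma \ref{cor:mixing_beta} and Corollary \ref{Lp:cor_beta_Ypmu_p2} is structurally unavailable here, not merely harder to justify.

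The paper's proof avoids $\beta$-mixing entirely. It bounds the $\tau$-coefficients of the $L^p$-valued process $X_k=\{\indic_{Z_k\leq t}-F(t):t\in[0,1]\}$ by the coefficient $\tau_{\abs{\cdot}^{1/p}}(\calF_0,(Z_k,Z_{k+l}))$ associated with the weakened metric $\abs{x-y}^{1/p}$ (inequality (\ref{Lp_intermittent:tauX_control})), and then establishes Lemma \ref{lem:Young_tau_estimate}, i.e. $\tau_{\abs{\cdot}^{1/p}}(\calF_0,(Z_k,Z_{k+l}))\leq ck^{-(1-\gamma)/\gamma}$, via the Young-tower estimates of \cite{DM2015}. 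Since $\nu_\gamma$ is supported on $[0,1]$, condition (\ref{prp:mixing_tau_condition}) reduces to $\sum_{k\geq1}k\,\tau_{2,X}(k)<\infty$, which holds exactly when $(1-\gamma)/\gamma>2$, i.e. $\gamma<1/3$; Proposition \ref{prp:2smooth+tau} then gives all three assertions (the ``moreover'' part as you describe, using $\norm{\cdot}_{L^2}^2\in\Lambda_3(L^2([0,1]),2)$ with $\delta=1$). To repair your argument you would need to replace the $\beta$-mixing step by this $\tau$-type estimate, which is the genuinely non-formal content of the paper's proof.
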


\begin{cor}
    Let $p>2$ and $r = \min(p, 3)$.
    For any $\gamma \in ]0,1/3[$, 
    \begin{align*}
        \{ \, G_n(t) \, : t \in [0,1]\} \xrightarrow[n \to \infty]{\law} \{G(t) \,:\, t \in [0,1]\} \; \textrm{in } L^p([0,1]),
    \end{align*} 
    and for any $M \geq 0$, 
    $$
        \sup_{f \in \Lambda_{r}(L^p([0,1]), M)}\Delta_n(f) = O(n^{-(r-2)/2}). 
    $$
    Moreover,
    \begin{align*}
        \abs{\E \norm{G_n}_{L^p([0,1])}^r
                - \E \norm{G}_{L^p([0,1])}^{r}
                }
                = O (n^{-(r-2)/2}).
    \end{align*}
\end{cor}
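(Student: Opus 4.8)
The plan is to obtain this statement as a direct consequence of Corollary~\ref{Lp:cor_beta_Ypmu}, applied to the stationary Markov chain $(Z_i)_{i\in\Z}$ with $\mu$ taken to be the Lebesgue measure on $[0,1]$, and then to transport the three conclusions to the empirical process $(G_n)_{n\geq 1}$ through the distributional identity $G_n\overset{\law}{=}L_n$ recalled above. Note that the integrability requirement on $X_0$ is automatic here, since $\mu$ is a probability measure and $\abs{\indic_{Z_0\leq t}-F(t)}\leq 1$, so that $\norm{X_0}_{L^p(\mu)}\leq 1$ almost surely.

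First I would specialise the quantities appearing in Corollary~\ref{Lp:cor_beta_Ypmu} to this setting. With $\mu$ the Lebesgue measure on $[0,1]$ one has $F_\mu(x)=x$ on $[0,1]$, hence $Y_{p,\mu}=\abs{F_\mu(Z_0)}^{1/p}=Z_0^{1/p}$ takes its values in $[0,1]$. Consequently $Q_{Y_{p,\mu}}\leq 1$, the moment condition $\E Y_{p,\mu}^{r}<\infty$ is trivially satisfied, and the integral condition~(\ref{Lp:cond_beta}) is dominated by $\sum_{k\geq 1}k\,\beta_{2,Z}(k)$. Moreover, by the Markov property any pair $(Z_k,Z_{k+l})$ is measurable with respect to $\sigma(Z_i:i\geq k)$, so that $\beta_{2,Z}(k)$ is bounded by the usual absolute regularity coefficient $\beta(k)=\beta\bigl(\sigma(Z_i:i\leq 0),\,\sigma(Z_i:i\geq k)\bigr)$ of the chain.

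It then remains to control $\beta(k)$ and to feed the outcome into Corollary~\ref{Lp:cor_beta_Ypmu}. For the intermittent map $T_\gamma$ with $\gamma\in\,]0,1[$, the return-time tails in the associated Young tower are of order $n^{-1/\gamma}$, which yields $\beta(k)=O(k^{1-1/\gamma})$ for the two-sided stationary chain; hence $k\,\beta(k)=O(k^{2-1/\gamma})$, and since $\gamma<1/3$ forces $2-1/\gamma<-1$, the series $\sum_{k\geq 1}k\,\beta(k)$ converges and~(\ref{Lp:cond_beta}) holds. At this point Corollary~\ref{Lp:cor_beta_Ypmu} applies: the chain is ergodic (being absolutely regular with $\beta(k)\to 0$) and $L^p([0,1])$ is $2$-smooth, so we obtain the weak convergence in $L^p([0,1])$, the Donsker property of $W_{1,q}(\mu)$, and $\sup_{f\in\Lambda_r(L^p([0,1]),M)}\Delta_n(f)=O(n^{-(r-2)/2})$ for the chain. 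The ``moreover'' is deduced by applying this last bound to $f=c_p^{-1}\psi_p^{r}$, which lies in $\Lambda_r(L^p(\mu),0)$ up to the normalising constant (directly for $2<p<3$, and by Proposition~\ref{prp:diff3_bornee} for $p\geq 3$). Finally, since $G_n$ and $L_n$ share the same law for every $n$, all three conclusions obtained for the chain hold verbatim for $G_n$ --- in particular with the same Gaussian limit $G$ --- which finishes the proof.

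The only genuinely non-mechanical point is establishing and citing correctly the rate $\beta(k)=O(k^{1-1/\gamma})$: one must make sure it is really the absolute regularity coefficient of the two-sided stationary extension that enjoys this polynomial decay, rather than a mere decay-of-correlations estimate or one of the weaker $\widetilde\phi$- or $\alpha$-dependence coefficients. This is exactly where the threshold $\gamma<1/3$ enters, being precisely what makes $\sum_k k\,\beta(k)$ summable; everything else reduces to the substitutions and elementary observations described above.
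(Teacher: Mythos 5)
There is a genuine gap, and it sits exactly at the point you flagged as the ``only genuinely non-mechanical point'': the claim that the two-sided stationary chain $(Z_i)_{i\in\Z}$ satisfies $\beta(k)=O(k^{1-1/\gamma})$ for the classical absolute regularity coefficient is false, and no route through Lemma \ref{cor:mixing_beta} or Corollary \ref{Lp:cor_beta_Ypmu} can work here. By construction $(Z_n,\dots,Z_1)$ is distributed as $(T,\dots,T^n)$, which forces $Z_i=T(Z_{i+1})$ and hence $Z_0=T^k(Z_k)$ almost surely: the ``past'' $Z_0$ is a deterministic function of the ``future'' $Z_k$. Consequently $\sigma(Z_0)\subseteq\sigma(Z_k)$, so already $\alpha\bigp{\sigma(Z_0),\sigma(Z_k)}\geq \sup_{A\in\sigma(Z_0)}\Proba(A)(1-\Proba(A))\geq 1/4$ (as $Z_0$ has a density), and the classical $\beta(k)$ is bounded away from $0$. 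Even the weaker coefficient $\beta_{2}(k)$ used in Lemma \ref{cor:mixing_beta} does not decay: taking $f(x,y)=g(T^k(x))$ with $\norm{g}_\infty\leq 1$ gives $\condEc{f(Z_k,Z_{k+l})}{\calF_0}-\E f(Z_k,Z_{k+l})=g(Z_0)-\nu(g)$, which has a nonvanishing $\mathbb L^1$ norm. The rate $k^{1-1/\gamma}$ you quote is a decay-of-correlations/weak $\alpha$- or $\tau$-dependence rate for the LSV map, not an absolute regularity rate; deterministic dynamical systems are essentially never $\beta$-mixing in this sense.

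This is precisely why the paper does not go through the $\beta$-route. Its proof verifies condition (\ref{prp:mixing_tau_condition}) of Proposition \ref{prp:2smooth+tau} directly: since $\nu$ is supported on $[0,1]$ the condition reduces to $\sum_{k\geq 1}k\,\tau_{2,X}(k)<\infty$, and the paper bounds $\tau_{2,X}(k)$ by $\sup_{l\geq 0}\tau_{\abs{\cdot}^{1/p}}(\calF_0,(Z_k,Z_{k+l}))$ (using $\norm{X_k-X_k^*}_{L^p([0,1])}\leq\abs{Z_k-Z_k^*}^{1/p}$), and then proves Lemma \ref{lem:Young_tau_estimate}, i.e.\ $\tau_{\abs{\cdot}^{1/p}}(\calF_0,(Z_k,Z_{k+l}))\leq c\,k^{-(1-\gamma)/\gamma}$, via the Young-tower construction and estimate (4.3) of \cite{DM2015}. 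The $\tau$-coefficient escapes the obstruction above because it only tests Lipschitz functions of $(Z_k,Z_{k+l})$, and $g\circ T^k$ is not Lipschitz with a controlled constant. Your numerology ($\gamma<1/3$ being equivalent to summability of $k\cdot k^{-(1-\gamma)/\gamma}$) and the final transfer from $(L_n)$ to $(G_n)$ via equality in law are correct, but the dependence coefficient you feed into the machinery must be replaced by the $\tau$-type one, together with a proof of its decay, for the argument to go through.
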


\begin{proof}     
    Since we are interested in the behavior of the partial sum, we can work on $(Z_n)_{n \geq 1}$ rather than $(T^n)_{n \geq 1}$. Let us denote $X_k = \{\indic_{Z_k \leq t} - F(t) : t \in [0,1]\}$ which takes values in $L^p(\nu)$ for any $p \geq 2$. 
    Since $\nu$ is supported on $[0,1]$, condition (\ref{prp:mixing_tau_condition}) of Proposition \ref{prp:2smooth+tau} reads as $\sum_{k \geq 1} k\tau_{2,X}(k) < \infty$.
    
    Let us estimate $\tau_{2,X}(k)$ in order to apply Proposition \ref{prp:2smooth+tau}. To this aim, we would apply results of \cite{DP2009} and \cite{DM2015} whose main argument is the modelling of the dynamical system by Young towers. We refer to \parencite[Section 4.1]{DM2015} for the construction of the tower $X$ associated to $T$ and for the mappings $\pi$ from $X$ to $[0, 1]$ and $F$ from $X$ to $X$ such that $T \circ \pi = \pi \circ F$. On $X$ there is a probability measure $m_0$ and a unique $F$-invariant probability measure $\bar \nu$ with density $h_0$ with respect to $m_0$. Note that the unique $T$-invariant probability measure $\nu$ is then given by $\nu = \bar \nu \circ \pi$ and denote $P$ the Perron-Frobenius operator of $F$ with respect to $\bar \nu$. Moreover, there exists a distance $d$ on $X$ such that for any $x,y \in X$, $d(x, y) \leq 1$ and $|\pi(x) - \pi(y)| \leq \kappa d(x, y)$ for some positive constant $\kappa$.
    \\

    \noindent
    For any $k,l \geq 0$, let us consider $(Z_k^{*}, Z_{k+l}^{*})$ the coupling associated with
    \begin{align*}
        \tau_{\abs{.}^{1/p}}(\calF_0, (Z_k, Z_{k+l}))
        \eqdef 
        \frac 1 2
        \nu\bigp{\sup \left\{
            \abs{\condEc{f(Z_k, Z_{k+l})}{Z_0} - \E\bigcro{f(Z_k, Z_{k+l})}}
            : 
            f \in \Lambda_1(\R ^2, \abs{.}^{1/p})
        \right\}
        } 
    \end{align*}
    that is 
    \begin{align*}
        \inf \left\{ 
            \E \abs{Z_k - Z_k'}^{1/p} + \E \abs{Z_{k+l} - Z_{k+l}'}^{1/p}
            \, : \, 
            (Z_k', Z_{k+l}') \overset{\law}{=} (Z_k, Z_{k+l}), (Z_k', Z_{k+l}') \indep Z_0
        \right\}
        \hspace{45pt}
        \\
        =
        \E \abs{Z_k - Z_k^*}^{1/p} + \E \abs{Z_{k+l} - Z_{k+l}^*}^{1/p}
        .
    \end{align*}
    Let $X_k^{*} = \{\indic_{Z_k^* \leq t} - F(t) : t \in [0,1]\}$ and $X_{k+l}^{*}=\{\indic_{Z_{k+l}^* \leq t} - F(t) : t \in [0,1]\}$. By definition of $\tau_{2,X}(k)$, 
    \begin{align}
        \tau_{2,X}(k) 
        &\leq \underset{l \geq 0}{\sup} \frac{1}{2} \E_\nu \bigp{\norm{X_k - X_k^{*}}_{L^p([0,1])}+\norm{X_{k+l} - X_{k+l}^{*}}_{L^p([0,1])}}
            \nonumber
        \\
        &\leq 
        \underset{l \geq 0}{\sup} \frac{1}{2} \E_\nu \bigp{\abs{Z_k - Z_k^{*}}^{1/p}+\abs{Z_{k+l} - Z_{k+l}^{*}}^{1/p}}
            \nonumber
        \\ 
        & \leq 
        \underset{l \geq 0}{\sup} \, \tau_{\abs{.}^{1/p}}(\calF_0, (Z_k, Z_{k+l})). 
            \label{Lp_intermittent:tauX_control}
    \end{align}

    Proceeding in a similar way as in the proof of Theorem 2.1 in \cite{DP2009} and taking into account the inequality (4.3) in \cite{DM2015} rather than Lemma 2.3 in \cite{DP2009}, we derive the following lemma whose proof will be done in Section \ref{section:proofs}. 
    \begin{lem}
            \label{lem:Young_tau_estimate}
        There exists a positive constant $c$ such that for any $k,l \geq 0$, $$\tau_{\abs{.}^{1/p}}(\calF_0, (Z_k, Z_{k+l})) \leq ck^{-(1-\gamma)/\gamma}.$$ 
    \end{lem}

    Combining Lemma \ref{lem:Young_tau_estimate} with (\ref{Lp_intermittent:tauX_control}), we get $\tau_{2, X}(k) \leq ck^{-(1-\gamma)/\gamma}$. 
    Hence $\sum_{k \geq 1}k \tau_{2,X}(k) < \infty$ provided that $\gamma  < 1/3$. 
    The result follows from Proposition \ref{prp:2smooth+tau}.
\end{proof}

\section{Proofs}\label{section:proofs}

\subsection{Proof of Theorem \ref{theoreme:vitesse}}

\begin{lem}
        \label{lemme:egalite_bilineaire}
    Under conditions  \ref{hyp:conv_gamma2}, \ref{hyp:CLT} and \ref{hyp:u.i.}, for any symmetric bilinear continuous form $\varphi$, 
    \begin{align*}
       \E\bigcro{\varphi(G,G)} = \E\bigcro{\varphi(X_0, X_0)} +2\sum_{k \geq 1} \E\bigcro{\varphi(X_0, X_k)}.
    \end{align*}
\end{lem}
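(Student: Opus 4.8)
The plan is to evaluate $\lim_{n\to\infty}\E\bigcro{\varphi(n^{-1/2}S_n, n^{-1/2}S_n)}$ in two different ways and match the results. Since both sides of the asserted identity are linear in $\varphi$, I would first reduce to the case $\norm{\varphi}\leq 1$. Writing $c(k)\eqdef\E\bigcro{\varphi(X_0,X_k)}$ for $k\in\Z$, stationarity gives $c(k)=\E\bigcro{\varphi(X_{-k},X_0)}$ and symmetry of $\varphi$ then gives $c(k)=c(-k)$. The one point here that requires an argument is the absolute summability of $(c(k))_{k\in\Z}$: since $X_0$ is $\calF_0$-measurable, $\varphi$ is admissible in the supremum defining $\gamma(k)$ in (\ref{def_gamma}), so
\begin{align*}
    \abs{c(k)} = \abs{\E\bigp{\condEc{\varphi(X_0,X_k)}{\calF_0}}} \leq \E\bigp{\sup\abs{\condEc{A(X_0,X_k)}{\calF_0}}} = \gamma(\abs k),
\end{align*}
whence $\sum_{k\in\Z}\abs{c(k)}\leq \gamma(0)+2\sum_{k\geq1}\gamma(k)<\infty$ by \ref{hyp:conv_gamma2} (here $\gamma(0)=\E\normB{X_0}^2<\infty$).

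For the first computation, I would expand by bilinearity and stationarity: counting the pairs $(i,j)$ in $\{1,\dots,n\}^2$ with $j-i=k$,
\begin{align*}
    \E\bigcro{\varphi(n^{-1/2}S_n, n^{-1/2}S_n)} = \frac1n\sum_{i,j=1}^n c(j-i) = \sum_{\abs k<n}\Bigl(1-\frac{\abs k}{n}\Bigr)c(k),
\end{align*}
and then let $n\to\infty$; dominated convergence on $\Z$ (with dominating sequence $(\abs{c(k)})_k$, summable by the first step) yields the limit $\sum_{k\in\Z}c(k)=\E\bigcro{\varphi(X_0,X_0)}+2\sum_{k\geq1}\E\bigcro{\varphi(X_0,X_k)}$, which is exactly the right-hand side of the lemma.

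For the second computation, I would note that $x\mapsto\varphi(x,x)$ is continuous on $\mathbb B$ with $\abs{\varphi(x,x)}\leq\normB x^2$. Hypothesis \ref{hyp:u.i.} makes $(n^{-1}\normB{S_n}^2)_{n\geq1}$ uniformly integrable, hence also $\bigp{\varphi(n^{-1/2}S_n, n^{-1/2}S_n)}_{n\geq1}$; moreover $\E\normB G^2\leq\lambda<\infty$, so $\varphi(G,G)$ is integrable. By \ref{hyp:CLT} and the continuous mapping theorem, $\varphi(n^{-1/2}S_n,n^{-1/2}S_n)$ converges in distribution to $\varphi(G,G)$, and convergence in distribution together with uniform integrability upgrades to convergence of expectations, so $\E\bigcro{\varphi(n^{-1/2}S_n, n^{-1/2}S_n)}\to\E\bigcro{\varphi(G,G)}$. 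Matching the two limits completes the proof. I expect the summability estimate of the first paragraph to be the only genuine obstacle; the measurability of the suprema entering $\gamma(k)$ is handled by the separability of $\mathbb B$ (one may restrict these suprema to a countable norming family), which I would invoke without further comment.
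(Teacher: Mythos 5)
Your proposal is correct and follows essentially the same route as the paper: both compute $\lim_n \E\bigcro{\varphi(n^{-1/2}S_n,n^{-1/2}S_n)}$ once via hypotheses \ref{hyp:CLT} and \ref{hyp:u.i.} (continuity of $x\mapsto\varphi(x,x)$ plus uniform integrability) to get $\E\bigcro{\varphi(G,G)}$, and once via stationarity and dominated convergence under \ref{hyp:conv_gamma2} to get the covariance series. Your write-up merely makes explicit two points the paper leaves implicit, namely the domination $\abs{\E\bigcro{\varphi(X_0,X_k)}}\leq\gamma(k)$ and the passage from convergence in distribution plus uniform integrability to convergence of expectations.
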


\begin{proof}[Proof of Lemma \ref{lemme:egalite_bilineaire}]
    From \ref{hyp:CLT} and \ref{hyp:u.i.}, since $\varphi$ is continuous, 
    \begin{align*}
        \E\bigcro{\varphi(G,G)} 
        = 
        \lim\limits_{n \to +\infty} \E\bigcro{\varphi\bigp{\frac{S_n}{\sqrt n}, \frac{S_n}{\sqrt n}}}. 
    \end{align*}
    By stationarity, for any $n > 1$, 
    \begin{align*}
        \E\bigcro{\varphi\bigp{\frac{S_n}{\sqrt n}, \frac{S_n}{\sqrt n}}}
        & 
        = \E\bigcro{\varphi(X_0, X_0)} + \frac 2 n \sum_{i = 1}^n \sum_{k = i}^n \E\bigcro{\varphi(X_i, X_k)}
        \\
        & 
        = \E\bigcro{\varphi(X_0, X_0)} + \frac 2 n \sum_{l = 1}^{n -1} \frac{n-l}{n} \E\bigcro{\varphi(X_0, X_l)}. 
    \end{align*}
    From \ref{hyp:conv_gamma2}, the dominated convergence theorem applies and infers that 
    \begin{align*}
        \lim\limits_{n \to +\infty} \E\bigcro{\varphi\bigp{\frac{S_n}{\sqrt n}, \frac{S_n}{\sqrt n}}}
        = 
        \E\bigcro{\varphi(X_0, X_0)} +2\sum_{k \geq 1} \E\bigcro{\varphi(X_0, X_k)}.
    \end{align*}
\end{proof}

\begin{proof}[Proof of Theorem \ref{theoreme:vitesse}]
We shall apply the Lindeberg method. In case of a triangular array of independent random variables taking values in a Banach space of type 2, this method has been recently used by Ra\v{c}kauskas and Suquet \cite{RS2023} to get rates in the central limit theorem (see their Theorem 21). 
Consider $(\eps_k)_{k\geq 1}$ a sequence of i.i.d. centered gaussian random variables distributed as $G$ and independent of $(X_k)_k$. 
Let $n \geq 1$. For any $i \leq n$, set $\Gamma_i = \sum_{k = i}^n \eps_k$. 
In order to simplify the calculation lines, write 
\begin{align*}
    & f^{(i)}_{j,k}(\cdot) = f^{(i)}\bigp{k^{-1/2}\bigcro{\,\cdot + \Gamma_j}}. 
\end{align*}

Applying Taylor's formula, 
\begin{align*}
    &\E\bigcro{f(n^{-1/2}S_n) - f(n^{-1/2}\Gamma_1)}
    = 
    \frac{1}{\sqrt n} \sum_{i =1}^n \E \bigcro{ f_{i+1,n}^{(1)}(S_{i-1})(X_i)}
    \\
    &\hspace{4cm}
    +
    \frac{1}{2n} \sum_{i = 1}^n \E \bigcro{ f_{i+1, n}^{(2)}(S_{i-1})(X_i, X_i) - f_{i+1, n}^{(2)}(S_{i-1})(\eps_i , \eps_i) }
    +
    \frac{1}{n} \sum_{i= 1}^n \E [R_i - \Tilde R_i],
\end{align*}
where 
 \begin{align}
        &
        \nonumber
        R_i = \int_0^1 (1-s) \bigcro{f_{i+1,n}^{(2)}\bigp{S_{i-1}+sX_i} - f_{i+1,n}^{(2)}(S_{i-1})}(X_i,X_i) \, ds
        \\ 
        \nonumber
        \text{and }&
        \\
        &
        \nonumber
        \Tilde R_i = \int_0^1 (1-s) \bigcro{ f_{i+1,n}^{(2)}\bigp{S_{i-1}+s\eps_i} - f_{i+1,n}^{(2)}(S_{i-1})}(\eps_i,\eps_i) \, ds.
    \end{align}
\noindent
 As by independence $\E [f_{i+1, n}^{(1)}(0)(X_i)] = \E [f_{i+1, n}^{(1)}(0)(\E X_i))] = 0$, we can write
 \begin{align*}
     &\frac{1}{\sqrt n} \sum_{i =1}^n \E \bigcro{ f_{i+1,n}^{(1)}(S_{i-1})(X_i)}
        = 
     \frac{1}{\sqrt n} \sum_{i =1}^n \sum_{k = 1}^{i-1} \E \bigcro{ f_{i+1, n}^{(1)}(S_{i-k})(X_i) - f_{i+1, n}^{(1)}(S_{i-k-1})(X_i) }
     \\
     & \hspace{22pt}
        = 
     \frac{1}{n} \sum_{i = 1}^n \sum_{k=1}^{i-1} \E \bigcro{ f_{i+1, n}^{(2)}(S_{i-k-1})(X_{i-k}, X_i) } 
     \\
     &\hspace{44pt}
     +
     \frac{1}{n} \sum_{i = 1}^n \sum_{k=1}^{i-1} \int_0^1 \E \bigcro{
                                                            \left\{f_{i+1, n}^{(2)}(S_{i-k-1}+tX_{i-k}) - f_{i+1, n}^{(2)}(S_{i-k-1}) \right\}(X_{i-k}, X_i) 
                                                                    } dt
    . 
 \end{align*}
 Furthermore, according to Lemma \ref{lemme:egalite_bilineaire} and since $f_{i+1, n}^{(2)}(S_{i-1})$ is symmetric, for $(X_i^*)_i$ an independent copy of $(X_i)_i$, independent of $(\eps_i)_i$, we can write 
 \begin{align*}
     \E \bigcro{
     f_{i+1, n}^{(2)}(S_{i-1})(\eps_i , \eps_i) 
     }
     = 
     \E \bigcro{
     f_{i+1, n}^{(2)}(S_{i-1})(X_0^*, X_0^*) 
     }
     + 
     2 \sum_{k\geq 1} \E \bigcro{
     f_{i+1, n}^{(2)}(S_{i-1})(X_0^*, X_k^*) 
     }
     . 
 \end{align*}
\noindent
 Thus,
\begin{align}
    \E\bigcro{f(n^{-1/2}S_n) - f(n^{-1/2}\Gamma_1)} = D_1 - D_2 + D_3 + D_4 + D_5
    \label{generalbanachcase:decomposition}
\end{align}
where  
 \begin{align*}
    &D_1
    = 
    \frac{1}{n} \sum_{i = 1}^n \sum_{k=1}^{i-1} \E \bigcro{ f_{i+1, n}^{(2)}(S_{i-k-1})(X_{i-k}, X_i) - f_{i+1, n}^{(2)}(S_{i-k-1})(X_{i-k}^*, X_i^*)} 
    \\
    &\hspace{4cm}
    +
    \frac{1}{n} \sum_{i = 1}^n \sum_{k=1}^{i-1} \E \bigcro{ f_{i+1, n}^{(2)}(S_{i-k-1})(X_{i-k}^*, X_i^*) - f_{i+1, n}^{(2)}(S_{i-1})(X_0^* , X_k^*) },
    \\
    &D_2 = 
    \frac{1}{n} \sum_{i = 1}^n \sum_{k \geq i} \E \bigcro{f_{i+1, n}^{(2)}(S_{i-1})(X_0^* , X_k^*) }, 
    \\
    &D_3 = 
    \frac{1}{2n} \sum_{i = 1}^n \sum_{k=1}^{i-1} \E \bigcro{ f_{i+1, n}^{(2)}(S_{i-1})(X_i,X_i) - f_{i+1, n}^{(2)}(S_{i-1})(X_0^* , X_0^*) }, 
    \\
    &D_4 =
    \frac{1}{n} \sum_{i = 1}^n \sum_{k=1}^{i-1} \int_0^1 \E \bigcro{
                                                            \left\{f_{i+1, n}^{(2)}(S_{i-k-1}+tX_{i-k}) - f_{i+1, n}^{(2)}(S_{i-k-1}) \right\}(X_{i-k}, X_i) 
                                                                    } dt
    \\
    &D_5 =
    \frac{1}{n} \sum_{i= 1}^n \E [R_i - \Tilde R_i]
    .
 \end{align*}
 ~
 \\[0.5cm]
 Let us deal with $D_1$. 
 Taking into account stationarity and independence, 
 \begin{align*}
     \hspace{40pt}&\hspace{-40pt}
     \E \bigcro{ f_{i+1, n}^{(2)}(S_{i-k-1})(X_{i-k}, X_i) - f_{i+1, n}^{(2)}(S_{i-k-1})(X_{i-k}^*, X_i^*)} 
     \\
     &=
     \sum_{l = 1}^{i-k-1} \E \left[ \{f_{i+1, n}^{(2)}(S_{i-k-l}) - f_{i+1, n}^{(2)}(S_{i-k-l-1})\}(X_{i-k}, X_i) \right.
     \\
     &\hspace{5cm}
     \left.
     - \{f_{i+1, n}^{(2)}(S_{i-k-l}) - f_{i+1, n}^{(2)}(S_{i-k-l-1})\}(X_{i-k}^*, X_i^*) \right] 
     \\
     &=
     \sum_{l = 1}^{i-k-1} \E \bigp{ \condEc{A_{i,k,l}(X_{0}, X_k) -A_{i,k,l}(X_0^*, X_k^*)}{\calF_0} }
 \end{align*}
 where $A_{i,k,l} = f_{i+1, n}^{(2)}\bigp{\sum_{j = 1}^{i-k-l} X_{j - (i-k)}} - f_{i+1, n}^{(2)}\bigp{\sum_{j = 1}^{i-k-l-1} X_{j - (i-k)}}$  is a continuous bilinear form whose norm is bounded by $n^{-\delta /2}\normB{X_{-l}}^\delta$ since $f \in \Lambda_{2+\delta}(\mathbb B, M)$. Hence, 
 \begin{align*}
     \Delta
     \eqdef
     \abs{
        \frac{1}{n} \sum_{i = 1}^n \sum_{k=1}^{i-1} \E \bigcro{ f_{i+1, n}^{(2)}(S_{i-k-1})(X_{i-k}, X_i) - f_{i+1, n}^{(2)}(S_{i-k-1})(X_{i-k}^*, X_i^*)} 
     }
     \leq 
     n^{-\delta /2} \sum_{k = 1}^{n-1} \sum_{l=1}^{n-k-1} a(k). 
 \end{align*}
 \\
 On another hand
 \begin{align*}
     \Delta 
     = 
     \abs{\sum_{l = 1}^{i-k-1} \E \bigp{ \condEc{B_{i,k,l}(X_l, X_{l+k}) -B_{i,k,l}(X_l^*, X_{l+k}^*)}{\calF_0} }}
 \end{align*}
 where 
 $
    B_{i,k,l} = f_{i+1, n}^{(2)}\bigp{\sum_{j = 1}^{i-k-l} X_{j - (i-k-l)}} - f_{i+1, n}^{(2)}\bigp{\sum_{j = 1}^{i-k-l-1} X_{j - (i-k-l)}}
$  
is a continuous bilinear form whose norm is bounded by $n^{-\delta /2}\normB{X_{0}}^\delta$ since $f \in \Lambda_{2+\delta}(\mathbb B, M)$. 
Thus, 
 \begin{align*}
     \Delta
     \leq 
     n^{-\delta /2} \sum_{k = 1}^{n-1} \sum_{l=1}^{n-k-1} b(l). 
 \end{align*}
Combining both controls of $\Delta$, we get 
\begin{align*}
    \Delta
    \leq 
    n^{-\delta /2} \sum_{k=1}^{n-1} \sum_{l = 1}^{n-k-1} \min(\gamma_{2,\delta}(k),\gamma_{2,\delta}(l))
    \leq 
    n^{-\delta /2} \sum_{k=1}^{n-1} k \gamma_{2,\delta}(k). 
\end{align*}

\noindent
 On another hand, taking into account independence and stationarity,
 \begin{align*}
     \hspace{4cm}&\hspace{-4cm} 
     \Delta^* \eqdef
     \abs{\frac{1}{n} \sum_{i = 1}^n \sum_{k=1}^{i-1} \E \bigcro{ f_{i+1, n}^{(2)}(S_{i-k-1})(X_{i-k}^*, X_i^*) - f_{i+1, n}^{(2)}(S_{i-1})(X_0^* , X_k^*) }}
     \\
     &\leq 
     \frac{1}{n} \sum_{i = 1}^n \sum_ {k=1}^{i-1} n^{-\delta / 2}\abs{ \E \bigp{\normB{\sum_{j = i-k}^{i-1} X_j}^\delta \condEc{\Tilde A_{i,k,n}(X_0^*, X_k^*)}{\calF_0^*}}}
 \end{align*}
 where 
 $$
    \bigp{n^{-\delta / 2}\normB{\sum_{j = i-k}^{i-1} X_j}^\delta}\tilde A_{i,k,l} 
    = 
    f_{i+1, n}^{(2)}\bigp{S_{i-k-l}} - f_{i+1, n}^{(2)}\bigp{S_{i-k}}.
$$
Since $f \in \Lambda_{2+\delta}(\mathbb B, M)$, $\norm{\Tilde A_{i,k,l}}\leq 1$. Hence, 
\begin{align*}
    \Delta^* 
    \leq 
     n^{-\delta / 2} \sum_ {k=1}^{n-1} \E\bigp{\normB{S_k}^\delta} \E \bigp{\sup\limits_{\norm A \leq 1} \abs{ \condEp{ A(X_0, X_k)}{\calF_0}}}. 
\end{align*}
Since $\bigp{\normB{S_n}^2/n}_n$ is a uniformly integrable family, there exists $\lambda>0$ such that $\E \normB{S_k}^2 \leq \lambda k$ for any $k$. 
Applying Jensen's inequality, we get
 \begin{align*}
     \Delta^* 
        \leq 
     \lambda^{\delta/2} \, n^{-\delta/2} \sum_{k = 1}^n k^{\delta /2}\gamma(k). 
 \end{align*}
We derive 
\begin{align}
    \abs{D_1} \leq n^{-\delta /2} \sum_{k=1}^{n-1} k \gamma_{2,\delta}(k) + \lambda^{\delta/2} \, n^{-\delta/2} \sum_{k = 1}^n k^{\delta /2} \gamma(k).
    \label{generalbanachcase:D1}
\end{align}

 ~
 \\[0.5cm]
 We turn to the control of $D_2$. 
Since $f \in \Lambda_{2+\delta}(\mathbb B, M)$,
 \begin{align*}
     \norm{f_{i+1, n}^{(2)}(S_{i-1})} 
     \leq \norm{f_{i+1, n}^{(2)}(S_{i-1}) - f^{(2)}(0)} + \norm{f^{(2)}(0)} 
     \leq n^{-\delta / 2}\normB{S_{i-1} + \Gamma_{i+1}}^\delta + M,
 \end{align*}
 with 
 \begin{align*}
     \E (\normB{S_{i-1} + \Gamma_{i+1}}^\delta) 
     &\leq \bigp{[\lambda(i-1)]^{1/2} + [(n-i)\E\normB{G}^2]^{1/2}}^\delta 
     \\
     &\leq [\lambda^{\delta/2}+(\E\normB{G}^2)^{\delta/2}].[(i-1)^{1/2}+(n-i-1)^{1/2}]^\delta  
     \\
     &\leq 2^\delta(\lambda^{\delta/2}+(\E\normB{G}^2)^{\delta/2})n^{\delta/2}
     . 
 \end{align*}
 By independence between $(X_i)_i$ and $(X_i^*)_i$, we derive
 \begin{align}
     \abs{D_2}
     &\leq
        \frac{1}{n} \sum_{i = 1}^n \sum_{k \geq i} \E \bigp{n^{-\delta / 2}\normB{S_{i-1} + \Gamma_{i+1}}^\delta + M} \E\bigp{\sup\limits_{\norm A \leq 1} \abs{\condEp{A(X_0^*, X_k^*)}{\calF_0^*}}}
    \nonumber 
    \\
     &\leq 
         (2^\delta\lambda^{\delta/2}+2^\delta(\E\normB{G}^2)^{\delta/2} + M) \frac 1 n \, \sum_{k \geq 1}\min(n,k) \E\bigp{\sup\limits_{\norm A \leq 1} \abs{\condEp{A(X_0, X_k)}{\calF_0}}}
     \nonumber
     \\
     &\leq 
     (2^\delta\lambda^{\delta/2}+2^\delta(\E\normB{G}^2)^{\delta/2} + M) \, n^{-\delta / 2} \, \sum_{k \geq 1} k^{\delta /2} \gamma(k). 
     \label{generalbanachcase:D2}
 \end{align}

 ~
 \\[0.5cm]
 We can write $D_3$ as
 \begin{align*}
     \frac{1}{2n} \sum_{i = 1}^n \sum_{k = 1}^{i-1} \E\bigcro{
        \{ f_{i+1, n}^{(2)}(S_{i-k}) - f_{i+1, n}^{(2)}(S_{i-k-1}) \} (X_i, X_i) - \{ f_{i+1, n}^{(2)}(S_{i-k}) - f_{i+1, n}^{(2)}(S_{i-k-1}) \} (X_0^*, X_0^*)
     }
    \\
     = 
     \frac{1}{2n} \sum_{i = 1}^n \sum_{k = 1}^{i-1} \E\bigp{ \condEc{
        A_{i,k} (X_k, X_k) - A_{i,k} (X_0^*, X_0^*)
    }{\calF_0}
     }
 \end{align*}
 where $A_{i,k} = f_{i+1,n}^{(2)}\bigp{\sum_{j = 1}^{i-k} X_{j - (i-k)} } - f_{i+1,n}^{(2)}\bigp{\sum_{j = 1}^{i-k-1} X_{j - (i-k)} }$ is a continuous bilinear form whose norm is bounded by $n^{-\delta /2}\normB{X_0}^\delta$. 
 \\
 Thus,  
 \begin{align}
    \abs{ D_3 }
    \leq 
        \frac{n^{-\delta /2}}{2} \sum_{k = 1}^n \E \bigp{ \normB{X_0}^\delta \sup\limits_{\norm A \leq 1} \abs{ \condEc{A(X_k, X_k)}{\calF_0} - A(X_0, X_0) } }
    \leq 
        \frac{n^{-\delta /2}}{2} \sum_{k = 1}^n b(k)
    . 
    \label{generalbanachcase:D3}
 \end{align}
~
 \\[0.5cm]
 We turn to the control of $D_4$. By stationarity
 \begin{align*}
     D_4
     =
     \frac{1}{n}\sum_{i = 1}^n \sum_{k =1}^{i-1} \int_0^1 \E\bigp{\condEc{C_{i,k,t}(X_0,X_k)}{\calF_0}} dt 
 \end{align*}
 where $C_{i,k,t} = f_{i+1,n}^{(2)}\bigp{\sum_{j=1}^{i-k-1}X_{j-(i-k)}+tX_0}-f_{i+1,n}^{(2)}\bigp{\sum_{j=1}^{i-k-1}X_{j-(i-k)}}$ is a continuous bilinear form whose norm is bounded by $t^\delta n^{-\delta /2}\normB{X_0}^\delta$. Consequently, 
 \begin{align*}
     \abs{D_4} \leq n^{-\delta /2} \sum_{k = 1}^{n-1} \E\bigp{\normB{X_0}^\delta \sup\limits_{\norm{A}\leq 1} \abs{\condEc{A(X_0, X_k)}{\calF_0}}}. 
 \end{align*}
 Now, 
 \begin{align*}
     \E\bigp{\normB{X_0}^\delta \sup\limits_{\norm{A}\leq 1} \abs{\condEc{A(X_0, X_k)}{\calF_0}}} 
     &\leq a(k) + \E(\normB{X_0}^\delta)\gamma(k)
     \leq a(k) + \lambda^{\delta/2}\gamma(k). 
 \end{align*}
 Hence
 \begin{align}
     \abs{D_4} \leq n^{-\delta /2}\sum_{k=1}^{n-1}\gamma_{2,\delta}(k) + \lambda^{\delta/2}n^{-\delta/2}\sum_{k = 1}^{n-1}\gamma(k).
     \label{generalbanachcase:D4}
 \end{align}
 
 It remains to control $D_5$. 
 Taking into account $\delta$-Hölder continuity, 
 \begin{align}
     \abs{D_5} \leq n^{-\delta / 2}\bigp{ \E \normB{X_0}^{2+\delta} + \E \normB{G}^{2+\delta} }. 
     \label{generalbanachcase:D5}
 \end{align}
 \\[0.5cm]
 Combining decomposition (\ref{generalbanachcase:decomposition}) with the upper bounds obtained in (\ref{generalbanachcase:D1}), (\ref{generalbanachcase:D2}), (\ref{generalbanachcase:D3}), (\ref{generalbanachcase:D4}) and (\ref{generalbanachcase:D5}), we derive
 \begin{align*}
    \hspace{50pt}&\hspace{-50pt}
    \abs{\E\bigcro{f(n^{-1/2}S_n) - f(n^{-1/2}\Gamma_1)}}
        \\
        \leq & n^{-\delta /2} 
    \bigp{
    \sum_{k=1}^{n-1} (k+2) \gamma_{2,\delta}(k) 
        +
    (c_\delta + M) \sum_{k \geq 1} k^{\delta /2} \gamma(k)
        +
    \E \normB{X_0}^{2+\delta} + \E \normB{G}^{2+\delta}
    }
 \end{align*}
 with $c_\delta = (2^\delta+2)\lambda^{\delta/2} + 2^\delta(\E \normB{G}^2)^{\delta/2}$, which proves the theorem.

\end{proof}

\subsection{Proof of Lemma \ref{prp:mixing_tau}}

    Here we control the coefficients involved in Theorem \ref{theoreme:vitesse}. To this end, we apply a coupling lemma for $\tau$-mixing coefficients. 
\\
First, according to \parencite[Lemma 1]{DM2006} with $d(x,y) = \normB{x-y}$, for any $n$ there exists a random variable $X_n^*$ distributed as $X_n$, independent of $\calF_0$ and such that     
$$\E(\normB{X_n - X_n^*}) = \tau_1(n).$$
On the other hand according to \parencite[Lemma 1]{DM2006} with $d((x,x'),(y,y')) = \normB{x-y}+\normB{x'-y'}$, for any $k,l$ there exists  $(X_l^{**},X_{k+l}^{**})$ distributed as $(X_l,X_{k+l})$, independent of $\calF_0$ and such that 
$$\frac{1}{2}\bigcro{\E\bigp{\normB{X_l - X_l^{**}} + \normB{X_{l+k} - X_{l+k}^{**}}}} = \tau\left(\calF_0, (X_l, X_{l+k})\right).$$ 
In what follows, all the suprema below are taken over bilinear continuous forms whose norm is bounded by 1. 
\\
Let us begin by the control of $\gamma(k)$. 
For any $k \geq 0$, applying Proposition 1 in \cite{DD2003},
\begin{align*}
    \E \bigp{\sup\abs{ \condEc{ A(X_0, X_k)}{\calF_0}}}
    &= 
    \E \bigp{\sup\abs{ \condEc{ A(X_0, X_k-X_k^*)}{\calF_0}}}
    \\
    &\leq 
    \E(\normB{X_0}\normB{X_k-X_k^*})
    \\
    &\leq 
    \int_0^{\tau_{{}_1}(k)} Q_{\normB{X_0}}\circ G_{\normB{X_k - X_k^*}}(u) \, du 
    .
\end{align*}
Hence, taking into account properties of the function $G$ and stationarity, 
\begin{align}
        \label{prooftau_gamma}
    \gamma(k)
    \leq 
    2 \int_0^{\tau_{{}_1}(k)/2} Q_{\normB{X_0}} \circ G_{\normB{X_0}} (v) \, dv. 
\end{align}

It remains to control $\gamma_{2,\delta}(k)$, to this end let us bound $a(k)$ in one hand and $b(l)$ on the other hand. 
For any $k \geq 0$ and $l \geq 0$, 
\begin{align*}
    \E \bigp{
        \normB{X_{-l}}^\delta \sup \abs{
            \condEc{A(X_0, X_k)}{\calF_0} - \E[A(X_0, X_k)]
        }
    }
    \hspace{-5.4cm}&
    \\
    &\leq 
    \E \bigp{
        \normB{X_{-l}}^\delta \sup \left\{
            \condEc{\abs{A(X_0, X_k-X_k^*)}}{\calF_0} + \E[\abs{A(X_0, X_k-X_k^*)}
    \right\}
    }
    \\
    &\leq 
    2\, \E\bigp{
        \normB{X_{-l}}^\delta \normB{X_0} \normB{X_k - X_k^*}
    }
    \\
    &\leq 
    2\, \int_0^{\tau_{{}_1}(k)} Q_{\normB{X_{-l}}^\delta \normB{X_0}} \circ G_{\normB{X_k - X_k^*}} (u) \, du
    \\
    &\leq 
    4\, \int_0^{\tau_{{}_1}(k)/2}  Q_{\normB{X_{-l}}^\delta \normB{X_0}} \circ G_{\normB{X_0}} (v) \, dv
    . 
\end{align*}
Hence, taking the supremum over $l$ and applying \parencite[Lemma 2.1]{Rio2017}, 
\begin{align}
        \label{prooftau_a}
    a(k) \leq 4 \, \int_0^{\tau_{{}_1}(k)/2} Q_{\normB{X_0}}^{1+\delta} \circ G_{\normB{X_0}} (v) \, dv
    .
\end{align}
Now, for any $k,j \geq 0$, 
\begin{align*}
    \E\bigp{
        \normB{X_0}^\delta \sup\abs{\condEc{A(X_k,X_{k+j})}{\calF_0} - \E\bigcro{A(X_k, X_{k+j}}} 
    }
    \hspace{-5.7cm}&
    \\
    &=
    \E\bigp{
        \normB{X_0}^\delta \sup\abs{\condEc{A(X_k - X_k^{**},X_{k+j}) + A(X_k^{**}, X_{k+j} - X_{k+j}^{**})}{\calF_0} 
    }
    }
    \\
    &\leq 
    \E\bigp{\normB{X_0}^\delta \normB{X_k - X_k^{**}} \normB{X_{k+j}}}
    +
    \E\bigp{\normB{X_0}^\delta \normB{X_k^{**}} \normB{X_{k+j} - X_{k+j}^{**}} }
    \\
    &\leq 
    \int_0^{\E\normB{X_k - X_k^{**}}} Q_{\normB{X_0}^\delta \normB{X_{k+j}}} \circ G_{\normB{X_k - X_k^{**}}} (u) \, du 
    \\
    &\hspace{2cm}+
    \int_0^{\E\normB{X_{k+j} - X_{k+j}^{**}}} Q_{\normB{X_0}^\delta \normB{X_{k}^{**}}} \circ G_{\normB{X_{k+j} - X_{k+j}^{**}}} (u) \, du 
    .
\end{align*}
As $\min(\E\normB{X_k - X_k^{**}},\E \normB{X_{k+j} - X_{k+j}^{**}} ) \leq (\E\normB{X_k - X_k^{**}} + \E \normB{X_{k+j} - X_{k+j}^{**}})/2$, we infer that 
\begin{align*}
    \E\bigp{
        \normB{X_0}^\delta \sup\abs{\condEc{A(X_k,X_{k+j})}{\calF_0} - \E\bigcro{A(X_k, X_{k+j}}} 
    }
    \hspace{-3.7cm}&
    \\
    &\leq 
    4\int_0^{\tau(\calF_0, (X_k, X_{k+j}))/2} Q_{\normB{X_0}}^{1+\delta}(u) \circ G_{\normB{X_0}} (v) \, dv. 
\end{align*}
Hence, 
\begin{align}
        \label{prooftau_b}
    b(k) \leq 4\int_0^{\tau_{{}_2}(k)/2} Q_{\normB{X_0}}^{1+\delta}(u) \circ G_{\normB{X_0}} (v) \, dv.
\end{align}
Combining (\ref{prooftau_gamma}), (\ref{prooftau_a}) and (\ref{prooftau_b}) with the fact that $\tau_1(k) \leq \tau_2(k)$, the result follows. 

\subsection{Proof of Proposition \ref{prp:2smooth+tau}}

    As a first step, let us prove that under (\ref{prp:mixing_tau_condition}), $(\condEp{S_n}{\calF_0})_{n \geq 1}$ converges in $\mathbb L^2_{\mathbb B}$. To this end, let us prove that this sequence is a Cauchy sequence that is 
    \begin{align}
        \lim_{m \to \infty} \sup_{n >m} \norm{\sum_{i = m+1}^n \condEp{X_i}{\calF_0}}_{2,\mathbb B} = 0. 
            \label{condEp_Sn_F0:Cauchy}
    \end{align}
    Let $(X_i^*)_{i \geq 1}$ be the sequence constructed via the coupling lemma \parencite[Lemma 1]{DM2006}{} with $d(x,y) = \normB{x-y}$, i.e. $X_i^*$ distributed as $X_i$, independent of $\calF_0$ and such that $\E (\normB{X_i - X_i^*}) = \tau_1(i)$. Write
    \begin{align*}
        \norm{\sum_{i = m+1}^n \condEp{X_i}{\calF_0}}_{2,\mathbb B}^2
            \nonumber
        &= 
        \norm{\sum_{i = m+1}^n \condEp{X_i-X_i^*}{\calF_0}}_{2,\mathbb B}^2
        \\
        &\leq 
            \sum_{i = m+1}^n \E\bigp{\normB{X_i - X_i^*}\normB{\sum_{j = m+1}^n \condEp{X_j-X_j^*}{\calF_0}}}
        .
    \end{align*}
    Let us denote $Y_{n,m} \eqdef \sum_{i = m+1}^n \condEp{X_i-X_i^*}{\calF_0}$. Proceeding as in the proof of Proposition 1 in \parencite[]{DD2003}{} and using stationarity, 
    \begin{align*}
        \norm{Y_{n,m}}_{2,\mathbb B}^2 
        &\leq 
        \sum_{i = m+1}^n \int_0^{\tau_1(i)} Q_{Y_{n,m}} \circ G_{\normB{X_i - X_i^*}}(u) \, du 
        \\
        &\leq 
        2\sum_{i = m+1}^n \int_0^{G_{\normB{X_0}}(\tau_1(i)/2)} Q_{Y_{n,m}}(u) Q_{\normB{X_0}}(u) \, du 
        \\
        &\leq 
        2 \bigp{\int_0^1 Q_{Y_{n,m}}^2(u) du}^{1/2} \bigp{\int_0^1 Q_{\normB{X_0}}^2(u)\bigcro{\sum_{i = m+1}^n \indic_{u \leq G_{\normB{X_0}}(\tau_1(i)/2)}}^2 du }^{1/2}. 
    \end{align*}
    Hence, 
    \begin{align*}
        \norm{Y_{n,m}}_{2,\mathbb B} \leq 2\bigp{\int_0^1 Q_{\normB{X_0}}^2(u)\bigcro{\sum_{i = m+1}^n \indic_{u \leq G_{\normB{X_0}}(\tau_1(i)/2)}}^2 du }^{1/2}.
    \end{align*}
    Since
    \begin{align*}
        \bigcro{\sum_{i = m+1}^n \indic_{u \leq G_{\normB{X_0}}(\tau_1(i)/2)}}^2 
        &\leq 
        \sum_{i = m+1}^\infty (i+1)^2 \indic_{G_{\normB{X_0}}(\tau_1(i+1)/2) \leq u \leq G_{\normB{X_0}}(\tau_1(i)/2)}
        \\
        &\leq 
        2\sum_{i = m+1}^\infty \indic_{G_{\normB{X_0}}(\tau_1(i+1)/2) \leq u \leq G_{\normB{X_0}}(\tau_1(i)/2)}\sum_{k = 0}^{i} (k+1)
        \\
        &\leq 
        2\sum_{k=0}^\infty (k+1) \indic_{u \leq G_{\normB{X_0}}(\tau_1(m+1)/2)} \indic_{u \leq G_{\normB{X_0}}(\tau_1(k)/2)}
        , 
    \end{align*}
    we derive 
    \begin{align*}
        \norm{\sum_{i = m+1}^n \condEp{X_i}{\calF_0}}_{2,\mathbb B}
        &\leq 
        2^{3/2}\bigp{\sum_{i = 0}^\infty (i+1) \int_0^{\min(G_{\normB{X_0}}(\tau_1(i)/2),G_{\normB{X_0}}(\tau_1(m+1)/2))} Q_{\normB{X_0}}^2(u) du }^{1/2} 
            \nonumber
        \\
        &\leq 
        2^{3/2}\bigp{\sum_{i = 0}^\infty (i+1) \int_0^{\min(\tau_1(i),\tau_1(m+1))/2} Q_{\normB{X_0}} \circ G_{\normB{X_0}}(u) du }^{1/2}.
    \end{align*}
    Consequently, since (\ref{prp:mixing_tau_condition}) is verified,  (\ref{condEp_Sn_F0:Cauchy}) holds. 
    
    Recall that if $(\condEp{S_n}{\calF_0})_{n \geq 1}$ converges in $\mathbb L^2_{\mathbb B}$ then there exist a stationary sequence $(z_n)_{n \geq 0} \in \mathbb L^2_{\mathbb B}$ and a stationary martingale differences sequence $(d_n)_{n \geq 1} \in \mathbb L^2_{\mathbb B}$ such that 
    \begin{align*}
        X_n = d_n + z_{n -1} - z_n 
    \end{align*}
    (see for instance \parencite[]{Vol1993}{}). 
    Let us denote $M_n$ the martingale associated to $(d_n)_n$: $M_n = d_1 + \cdots + d_n$. 
    Then, according to \parencite[Proposition 3.2]{Cun2017}{}, condition \ref{hyp:CLT} is verified. It remains to prove that $\bigp{n^{-1}\normB{S_n}^2}_n$ is uniformly integrable. 
    First, note by stationarity of $(z_n)_n$ that 
    \begin{align*}
        \E\bigp{\frac{\normB{S_n - M_n}^2}{n}} = \frac{\E(\normB{z_0 - z_n}^2)}{n} \leq 4\frac{\E (\normB{z_0}^2)}{n} \xrightarrow[n \to \infty]{} 0. 
    \end{align*}
    Thus, $\bigp{\frac{\normB{S_n - M_n}^2}{n}}_{n \geq 1}$ is uniformly integrable as a bounded family and it is sufficient to prove that $\bigp{n^{-1}\normB{M_n}^2}_{n \geq 1}$ is uniformly integrable. Let $B >0$ and $M_n = M_n' + M_n''$ where 
    \begin{align*}
        & M_n' = \sum_{k = 1}^n d_k', \quad d_k' = d_k \indic_{\normB{d_k}\leq B} - \condEp{d_k \indic_{\normB{d_k}\leq B}}{\calF_{k -1}},
        \\
        & M_n'' = \sum_{k = 1}^n d_k'', \quad d_k'' = d_k \indic_{\normB{d_k}> B} - \condEp{d_k \indic_{\normB{d_k}> B}}{\calF_{k -1}}. 
    \end{align*}
    In the one hand, from (\ref{def:2-smooth_prp}), 
    \begin{align*}
        \frac{1}{n}\E\bigp{\normB{M_n''}^2 \indic_{\normB{M_n''}>A\sqrt n}}
        \leq 2\frac{L^2}{n} \sum_{k = 1}^n \E(\normB{d_k''}^2)
        \leq 4L^2 \E(\normB{d_0}^2 \indic_{\normB{d_0} > B}),
    \end{align*}
    which converges to 0 as $B$ tends to infinity since $d_0 \in \mathbb L^2_{\mathbb B}$. 
    \\
    On another hand, for any $a > 0$, 
    \begin{align}\label{control_Mn'}
        \frac{1}{n}\E\bigp{\normB{M_n'}^2 \indic_{\normB{M_n'}>A\sqrt n}} \leq A^{-a}n^{-(2+a)/2} \E(\normB{M_n'}^{2+a}). 
    \end{align}
    Applying \parencite[Theorem 2.6]{Pin1994}{} with $g : x \mapsto x^{2+a}$ and the martingale defined by $\Tilde{M_i'} = M_i'$ if $i \leq n$ and $\Tilde{M_i'} = M_n'$ if $i > n$, 
    \begin{align}\label{Burkholder_Pin}
        \E(\normB{M_n'}^{2+a}) 
        \leq c \E\bigp{\sum_{k = 1}^n \normB{d_k'}^2}^{\frac{2+a}{2}}
        \leq c (2B)^{2+a} n^{\frac{2+a}{2}}. 
    \end{align}
    Combining (\ref{control_Mn'}) and (\ref{Burkholder_Pin}), we infer that 
    \begin{align*}
        \frac{1}{n}\E\bigp{\normB{M_n'}^2 \indic_{\normB{M_n'}>A\sqrt n}} \leq c(2B)^{2+a}A^{-a} \xrightarrow[A \to \infty]{} 0. 
    \end{align*}
    Finally, 
    \begin{align*}
        \lim_{A \to \infty} \limsup_{n \to \infty} \frac{1}{n}\E\bigp{\normB{M_n}^2 \indic_{\normB{M_n}>A\sqrt n}} = 0, 
    \end{align*}
    that is $\bigp{n^{-1}\normB{M_n}^2}_{n \geq 1}$ is uniformly integrable, so that condition \ref{hyp:u.i.} holds. 
    \\
    So, overall, applying Theorem \ref{theoreme:vitesse} and considering Lemma \ref{prp:mixing_tau}, Proposition \ref{prp:2smooth+tau} follows. 
    
\subsection{Proof of Lemma \ref{lem:Young_tau_estimate}}
    Let $f \in \Lambda_1(\R^2, \abs{\cdot}^{1/p})$, setting $\displaystyle h_l(x) = K^l(f(x, \cdot))(x)$, we have 
    \begin{align*}
                \condEc{f(Z_k, Z_{k+l})}{Z_0}
                - 
                \E \bigcro{f(Z_k, Z_{k+l})}
        =
                K^k({h_l})(Z_0) - \nu(K^k({h_l}))
                .
    \end{align*}
    Let $\varphi$ be a bounded measurable function. One has 
    \begin{align*}
        \nu (\varphi K^k h_l) 
        &=
        \int \varphi \circ T^k(x) K^l(f(x,\cdot))(x) \,\nu(dx)
        \\&=
        \int \varphi \circ T^{k+l}(z) f(T^l(z),z) \, \nu(dz)
        \\&= 
        \int \varphi \circ \pi \circ F^{k+l} (x) f(\pi \circ F^l(x), \pi(x)) \, \bar\nu(dx)
        \\& = 
        \int \varphi \circ \pi (z) P^k\Tilde{h_l}(z) \, \bar\nu(dz)
        \\&= 
        \int \varphi(z) \E_{\bar \nu}(P^k \Tilde{h_l} | \pi = z) \, \bar\nu(dz)
    \end{align*}
    where 
    $\Tilde{h_l}(x) = P^l(f(\pi(x), \pi(\cdot)))(x)$.
    Therefore, 
    $
        K^k h_l \circ \pi
        = 
        {\bar\nu}(P^k \Tilde{h_l} | \pi)
    $
    so that
    \begin{align*}
        \sup_{f \in \Lambda_1(\R^2, \abs{\cdot}^{1/p})} \abs{K^k(h_l)(x) - \nu(K^k h_l)} 
        \leq 
        \E_{\bar \nu}\bigp{\left.\sup_{f \in \Lambda_1(\R^2, \abs{\cdot}^{1/p})}
            \abs{
                P^k(\Tilde{h_l})(Y_0) - \bar\nu(P^k(\Tilde{h_l}))
            } \right| \pi = x}. 
    \end{align*}
    It follows that
    \begin{align}
            \label{Lp_intermittent:Lemma2.1_DP09_adapted}
        2\tau_{\abs{\cdot}^{1/p}}(\calF_0, (Z_k, Z_{k+l}))
        \leq
        \E_{\bar \nu}\bigp{
            \sup_{f \in \Lambda_1(\R^2, \abs{\cdot}^{1/p})}
            \abs{
                P^k(\Tilde{h_l})(Y_0) - \bar\nu(P^k(\Tilde{h_l}))
            }
        }
    \end{align}
    where $(Y_n)_{n \in \Z}$ is a Markov chain with stationary law $\bar \nu$ and Kernel operator $P$. 
    Now, for any $x,y \in X$, 
    \begin{align}
        \abs{\Tilde{h_l}(x) - \Tilde{h_l}(y)}
        \leq 
            \nonumber
        &
        \abs{P^l f(\pi(x), \pi(\cdot))(x) - P^l f(\pi(y), \pi(\cdot))(x)}
        \\&
        +
        \abs{P^l f(\pi(y), \pi(\cdot))(x) - P^l f(\pi(y), \pi(\cdot))(y)}.
            \label{Lp_intermittent:lip_hl}
    \end{align}
    For any $f \in \Lambda_1(\R^2, \abs{\cdot}^{1/p})$, since $\pi$ is Lipschitz with respect to $d$, 
    \begin{align}
        \abs{P^l f(\pi(x), \pi(\cdot))(x) - P^l f(\pi(y), \pi(\cdot))(x)}
        &\leq 
        \kappa ^{1/p} d^{1/p}(x,y). 
            \label{Lp_intermittent:lip_hl_terme1}
    \end{align}
    On the other hand, 
    \begin{align*}
        \abs{P^l f(\pi(y), \pi(\cdot))(x) - P^l f(\pi(y), \pi(\cdot))(y)}
        &=
        \abs{P^l\psi_y(x) - P^l\psi_y(y)}
    \end{align*}
    with $\psi_y(\cdot) = f(\pi(y), \pi(\cdot))$. But if $f \in \Lambda_1(\R^2, \abs{\cdot}^{1/p})$, $\psi_y$ is Lipschitz with respect to the distance $d^{1/p}$. Hence, applying Lemma 2.2 in \cite{DP2009} it follows that there exists $c>0$ such that for any $f \in \Lambda_1(\R^2, \abs{\cdot}^{1/p})$,
    \begin{align}
        \abs{P^l f(\pi(y), \pi(\cdot))(x) - P^l f(\pi(y), \pi(\cdot))(y)}
        &\leq (c\kappa)^{1/p}d^{1/p}(x,y).
            \label{Lp_intermittent:lip_hl_terme2}
    \end{align}
    Finally combining (\ref{Lp_intermittent:lip_hl}), (\ref{Lp_intermittent:lip_hl_terme1}) and (\ref{Lp_intermittent:lip_hl_terme2}), $\Tilde{h_l}$ is Lipschitz with respect to the distance $d^{1/p}$ with Lipschitz constant $\kappa^{1/p} + (c\kappa)^{1/p}$. 
    Thus, from Lemma 2.2 in \cite{DP2009} applied to $\Tilde{h_l}$, there exists $c > 0$ such that
    \begin{align}
            \label{Lp_intermittent:lemma2.2}
        &\bar \nu \bigp{\sup \left\{ \abs{P^k \Tilde{h_l}(Z_0) - \bar \nu (P^k\Tilde{h_l})} : f \in \Lambda_1(\R^2, \abs{\cdot}^{1/p} \right\} }
        \\
        &\hspace{150pt}\leq
        c\bar \nu \bigp{\sup \left\{ \abs{P^k \varphi(Z_0) - \bar \nu (\varphi)} : \varphi \in \Lambda_1(X, d^{1/p}) \right\} }
        .
            \nonumber
    \end{align}
    Consequently, combining (\ref{Lp_intermittent:Lemma2.1_DP09_adapted})  and (\ref{Lp_intermittent:lemma2.2}) with estimate (4.3) in \cite{DM2015}, there exists $c > 0$ independent of $k$ and $l$ such that
    \begin{align*}
        \tau_{\abs{\cdot}^{1/p}}(\calF_0, (Z_k, Z_{k+l})) \leq ck^{-(1-\gamma)/\gamma}. 
    \end{align*}

\appendix
\section{Fréchet-derivative of order 3 of \texorpdfstring{$\norm{\cdot}_{L^p(\mu)}^q$}{a}}\label{section:annex}
In this section, we prove Lemma \ref{lemma:diff3_normLp}. For the sake of clarity, in what follows let us use the notation $\norm{\cdot}_p$ to refer to $\norm{\cdot}_{L^p(\mu)}$.

\noindent 
Define the function $\ell$ from $L^p(\mu)$ to $\R$ by 
\begin{align*}
    \ell(x) = \norm{x}_p^q. 
\end{align*}
\\
By combining a second order Taylor's integral formula and Hölder's inequality, we first note that for any $x,u \in L^p(\mu)\hspace{-3pt}\setminus\hspace{-3pt}\{0\}$, 
\begin{align*}
    \norm{x+u}_p^p - \norm{x}_p^p 
    = p \int u.sgn(x).\abs x ^{p-1} \, d\mu + O(\norm{u}_p^2). 
\end{align*}
Consequently, applying a first order Taylor's formula, 
\begin{align}\label{normp_diff:l_DL_1}
    \ell(x+u) - \ell(x) 
    = (\norm{x+u}_p^p)^{q/p} - (\norm{x}_p^p )^{q/p}
    = q\ell(x)^{1-p/q} \int u. x\abs x ^{p-2} \, d\mu + o(\norm{u}_p). 
\end{align}
Moreover, $
    \ell(u) - \ell(0) = o(\norm{u}_p)
    $
if $q > 1$.  
Therefore, as soon as $p \geq 1$, $\ell$ is Fréchet-differentiable on $L^p(\mu)$ with 
\begin{align}
    \ell^{(1)}(0)(u) = 0 
    \quad \text{and} \quad
    \ell^{(1)}(x)(u) = q\ell(x)^{1-p/q} \int u. x\abs x ^{p-2} \, d\mu \text{ if } x\neq 0.
        \label{normp_diff:l_diff1}
\end{align}

Let us prove that $\ell$ is twice Fréchet-differentiable. Write, for any $x, u, v \in L^p(\mu)$ with $x \neq 0, v \neq 0$,
\begin{align}\label{normp_diff:l_DL_2_long}
    \hspace{44pt}&\hspace{-44pt}
    \ell^{(1)}(x+v)(u) - \ell^{(1)}(x)(u) \nonumber
    \\
    =& q\ell(x+v)^{1-p/q} \int u. (x+v)\abs{x+v} ^{p-2} \, d\mu - q\ell(x)^{1-p/q} \int u. x\abs x ^{p-2} \, d\mu \nonumber
    \\
    =& 
        q\bigcro{\ell(x+v)^{1-p/q} - \ell(x)^{1-p/q}} \int u.\bigcro{(x+v)\abs{x+v} ^{p-2} - x\abs{x} ^{p-2}} \, d\mu \nonumber
        \\&+ 
        q\ell(x)^{1-p/q} \int u.\bigcro{(x+v)\abs{x+v} ^{p-2} - x\abs{x} ^{p-2}} \, d\mu \nonumber
        \\&+ 
        q\bigcro{\ell(x+v)^{1-p/q} - \ell(x)^{1-p/q}}\int u.x.\abs x ^{p-2} \, d\mu. 
\end{align}
On the one hand, combining a second order Taylor's integral formula with (\ref{normp_diff:l_DL_1}), 
\begin{align}\label{normp_diff:l(1-p/3)_DL_1}
    \ell(x+v)^{1-p/q} - \ell(x)^{1-p/q}
        \nonumber
    =& 
    \bigp{1 - \frac{p}{q}}\ell(x)^{-p/q}\bigcro{\ell(x+v) - \ell(x)} 
    \\
    &+
    \bigp{1 - \frac{p}{q}}\bigp{- \frac{p}{q}}\bigcro{\ell(x+v) - \ell(x)}^2 \int \bigcro{\ell(x) +t(\ell(x+v) - \ell(x))}^{-p/q-1} \, dt
        \nonumber
    \\
    =& (q-p)\ell(x)^{1-2p/q} \int vx\abs x^{p-2} \, d\mu + o(\norm{v}_p). 
\end{align}
On the other hand, on $\R$, 
\begin{align}\label{normp_diff:x(p-1)_DL_1}
    (x+v)\abs{x+v} ^{p-2} - x\abs{x} ^{p-2}
    & 
    = (p-1)v\abs x^{p-2} + o(\abs{v}). 
\end{align}
Hence, combining (\ref{normp_diff:l_DL_2_long}), (\ref{normp_diff:l(1-p/3)_DL_1}) and (\ref{normp_diff:x(p-1)_DL_1}), 
\begin{align*}
    \ell^{(1)}(x+v)(u) - \ell^{(1)}(x)(u)
    =& q(p-1)\ell(x)^{1-p/q} \int uv\abs x^{p-2} \, d\mu
    \\
    + &
    q(q-p)\ell(x)^{1-2p/q}\int vx\abs x^{p-2} \, d\mu \int ux\abs x^{p-2} \, d\mu 
    + o(\norm{v}_p). 
        \nonumber
\end{align*}
Moreover, $
    \ell^{(1)}(v)(u) - \ell^{(1)}(0)(u) = o(\norm{v}_p)
    $
if $q >2$. 
We deduce that provided that $p \geq 2$, $\ell$ is twice Fréchet-differentiable on $L^p(\mu)$ with $\ell^{(2)}(0)(u,v) = 0$ and for $x \neq 0$,
\begin{align}
    \ell^{(2)}(x)(u,v) 
    = q(p-1)\ell(x)^{1-p/q} \int uv\abs x^{p-2} \, d\mu
    + 
    q(q-p)\ell(x)^{1-2p/q}\int vx\abs x^{p-2} \, d\mu \int ux\abs x^{p-2} d\mu. 
        \label{normp_diff:l_diff2}
\end{align}

Finally, let us prove that $\ell$ is thrice Fréchet-differentiable on $L^p(\mu)$. For any $x, u, v, w \in L^p(\mu)$, with $x \neq 0, w \neq 0$, 
\begin{align}\label{normp_diff:l_DL_3_long}
    \hspace{17pt}&\hspace{-17pt}
    \ell^{(2)}(x + w)(u,v) - \ell^{(2)}(x)(u,v)
    \\
    &\hspace{-12pt}= 
    q(p-1)\ell(x+w)^{1 - p/q}\int uv \abs{x + w}^{p-2}d\mu
    - 
    q(p-1)\ell(x)^{1 - p/q}\int uv \abs{x}^{p-2}d\mu
        \nonumber
    \\
    &
    + 
    q(q-p) \ell(x + w)^{1 - 2p/q}\int v(x + w) \abs{x + w}^{p-2}d\mu \int u (x+w)\abs{x + w}^{p-2}d\mu 
        \nonumber
    \\
    &
    -
    q(q-p) \ell(x)^{1 - 2p/q}\int vx \abs{x}^{p-2}d\mu \int u x\abs{x}^{p-2}d\mu 
        \nonumber
    \\
    &
    \hspace{-12pt}=
    q(p-1)\ell(x+w)^{1-p/q}\int uv\bigcro{\abs{x+w}^{p-2} - \abs{x}^{p-2}} d\mu 
        \nonumber
    \\
    &+ 
    q(p-1)\bigcro{\ell(x+w)^{1-p/q}-\ell(x)^{1-p/q}}\int uv \abs{x}^{p-2} d\mu 
        \nonumber
    \\
    &+
    q(q-p)\ell(x+w)^{1-2p/q}\int v\bigcro{(x+w)\abs{x+w}^{p-2} - x\abs{x}^{p-2}}d\mu \int u \bigcro{(x+w)\abs{x+w}^{p-2} - x\abs{x}^{p-2}} d\mu
        \nonumber
    \\
    &+
    q(q-p)\ell(x+w)^{1-2p/q}\int v\bigcro{(x+w)\abs{x+w}^{p-2} - x\abs{x}^{p-2}}d\mu \int u x\abs{x}^{p-2} d\mu
        \nonumber
    \\
    &+
    q(q-p)\ell(x+w)^{1-2p/q} \int vx\abs{x}^{p-2} d\mu \int u\bigcro{(x+w)\abs{x+w}^{p-2} - x\abs{x}^{p-2}} d\mu 
        \nonumber
    \\
    &+
    q(q-p)\bigcro{\ell(x+w)^{1-2p/q}-\ell(x)^{1-2p/q}} \int vx\abs{x}^{p-2} d\mu \int ux\abs x^{p-2} d\mu
    . 
        \nonumber
\end{align}
Note that
\begin{align}\label{normp_diff:l(1-2p/3)_DL_1}
    \ell(x+w)^{1-2p/q} - \ell(x)^{1-2p/q}
    = 
    (q-2p)\ell(x)^{1-3p/q} \int wx\abs x^{p-2} \, d\mu + o(\norm{w}_p) 
\end{align}
and on $\R$, 
\begin{align}\label{normp_diff:x(p-2)_DL_1}
    \abs{x+w} ^{p-2} - \abs{x} ^{p-2}
    & 
    = (p-2) w x \abs x^{p-4} + o(\abs{w}). 
\end{align}
Applying (\ref{normp_diff:l(1-p/3)_DL_1}), (\ref{normp_diff:x(p-1)_DL_1}), (\ref{normp_diff:l(1-2p/3)_DL_1}) and (\ref{normp_diff:x(p-2)_DL_1}) in (\ref{normp_diff:l_DL_3_long}) infers that
\begin{align}\label{normp_diff:l_DL2}
    \hspace{44pt}&\hspace{-44pt}\ell^{(2)}(x+w)(u,v) - \ell^{(2)}(x)(u,v) 
    \\
    =& \;
    q(p-1)(p-2)\ell(x)^{1-p/q} \int u v w x\abs{x}^{p-4} d\mu 
        \nonumber
    \\&
    +
    q(p-1)(q-p)\ell(x)^{1-2p/q}\int wx\abs{x}^{p-2}d\mu \int uv \abs{x}^{p-2}d\mu
        \nonumber
    \\&
    +
    q(p-1)(q-p)\ell(x)^{1-2p/q}\int vx\abs{x}^{p-2}d\mu \int uw \abs{x}^{p-2}d\mu
        \nonumber
    \\&
    +
    q(p-1)(q-p)\ell(x)^{1-2p/q}\int ux\abs{x}^{p-2}d\mu \int vw \abs{x}^{p-2}d\mu
        \nonumber
    \\&
    +
    q(q-p)(q-2p)\ell(x)^{1-3p/q} \int wx\abs{x}^{p-2}d\mu \int vx\abs{x}^{p-2}d\mu \int ux\abs{x}^{p-2}d\mu
        \nonumber
    \\&
    +
    o(\norm{w}_p). 
        \nonumber
\end{align}
Therefore, as soon as $p \geq 3$, $\ell$ is thrice Fréchet-differentiable on $L^p(\mu) \setminus \{0\}$ and for $x \neq 0$
\begin{align}
    \hspace{33pt}&\hspace{-33pt}\ell^{(3)}(x)(u,v,w) 
    =
    q(p-1)(p-2)\norm{x}_p^{q - p} \int uvwx\abs{x}^{p-4} d\mu 
        \nonumber
    \\&
    +
    q(p-1)(q-p)\norm{x}_p^{q - 2p}\left(\int wx\abs{x}^{p-2}d\mu \int uv \abs{x}^{p-2}d\mu\right.
    +
    \int vx\abs{x}^{p-2}d\mu \int uw \abs{x}^{p-2}d\mu
        \nonumber
    \\&
    \hspace{10cm}
    +
    \left.\int ux\abs{x}^{p-2}d\mu \int vw \abs{x}^{p-2}d\mu \right)
        \nonumber
    \\&
    +
    q(q-p)(q-2p)\norm{x}_p^{q - 3p} \int wx\abs{x}^{p-2}d\mu \int vx\abs{x}^{p-2}d\mu \int ux\abs{x}^{p-2}d\mu. 
\end{align}
~\\
\noindent\textbf{Acknowledgements. }I would like to thank my two advisors J\'er\^{o}me Dedecker and Florence Merlevède for helpful discussions. 
\printbibliography
\end{document}